\newtheorem{theorem}{Theorem}[section]
\newtheorem{lemma}[theorem]{Lemma}
\newtheorem{conjecture}[theorem]{Conjecture}
\newtheorem{proposition}[theorem]{Proposition}
\theoremstyle{remark}
\newtheorem*{remark}{Remark}
\newtheorem{problem}[theorem]{Problem}
\theoremstyle{definition}
\newtheorem{definition}[theorem]{Definition}
\crefname{section}{section}{sections}
\crefname{subsection}{subsection}{subsections}
\Crefname{section}{Section}{Sections}
\Crefname{subsection}{Subsection}{Subsections}
\Crefname{figure}{Figure}{Figures}
\crefname{problem}{Problem}{Problem}
\crefname{hypothesis}{Hypothesis}{Hypotheses}
\crefname{conjecture}{Conjecture}{Conjecture}
\crefname{proposition}{Proposition}{Proposition}
\Crefname{ALC@unique}{Line}{Lines}
\newcommand{\eps}{\varepsilon}
\newcommand{\supp}{\mathrm{supp}}
\newcommand{\const}{\mathrm{const}}
\title{An explicit solution for a multimarginal mass transportation problem}
\author{Nikita~A.~Gladkov}
\address{National Research University Higher School of Economics, Moscow, Russia}
\email{gladkovna@gmail.com}
\author{Alexander~P.~Zimin}
\address{National Research University Higher School of Economics, Moscow, Russia}
\email{alekszm@gmail.com}
\keywords{Multimarginal Monge--Kantorovich problem, Kantorovich duality}
\thanks{
	This work has been funded by the Russian Academic Excellence Project '5-100'.
}
\begin{document}

\begin{abstract}
We construct an explicit solution for the multimarginal transportation problem on the unit cube $[0, 1]^3$ with the cost function $xyz$ and one-dimensional uniform projections.
We show that the primal problem is concentrated on a set with non-constant local dimension and admits many solutions, whereas the solution to the corresponding dual problem is unique (up to addition of constants).
\end{abstract}

\maketitle

\section{Introduction}
\subsection{Notation}
Assume we are given $n$ polish spaces $X_1$, $X_2$, \dots, $X_n$, equipped with probability measures $\mu_i$ on $X_i$ and a cost function $c: X_1 \times \dots \times X_n \to \mathbb{R}$. 

In multimarginal Monge-Kantorovich problem (called primal problem throughout this paper) we seek to minimize $$\int_{X_1 \times \dots \times X_n}
c(x_1, x_2, ..., x_n)~d\mu(x_1, x_2, \dots, x_n)$$
over the set $\Pi(\mu_1, \mu_2, \dots, \mu_n)$ of positive joint measures $\mu$ on the product space
$X_1 \times \dots \times X_n$ whose marginals are the $\mu_i$. See \cite{Villani, bogachev} for an account in the optimal transportation problem with two marginals and \cite{brendan_pass}.

With the primal problem people also consider the dual problem. Under conditions above we are concerned with the supremum of $$\sum_{i = 1}^n \int_{X_i}f_i(x_i)~d\mu_i$$
where supremum is taken over all sets of functions $\{f_i\}$ such that $\sum_{i = 1}^n f_i(x_i) \le c(x_1, \dots, x_n)$ for any $x_i \in X_i$.

It is easy to show that minimum in primal problem is less or equal to the supremum in dual problem. Under some conditions it is true that this numbers are equal \cite{Villani, brendan_pass, Kellerer}. 

We do not need a full power of duality here. This paper relies on the following easy fact.

\begin{lemma}[Complementary Slackness Condition]\label{lem:slackness_conditions}
Let $\mu \in \Pi(\mu_1, \dots, \mu_n)$ be a joint measure and $f_1, f_2, \dots, f_n$ be a tuple of functions such that $\sum_{i = 1}^n f_i(x_i) \le c(x_1, \dots, x_n)$. 
If there is a set $M \subset X_1 \times X_2 \times \dots \times X_n$ such that on $M$ one has $\sum_{i = 1}^n f_i(x_i) = c(x_1, \dots, x_n)$ with the additional property $\mu(M)=1$, then $\mu$ is a primal solution and $f_i$ is a dual solution.
\end{lemma}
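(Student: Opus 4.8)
The plan is to run the standard weak-duality chain and then observe that the hypotheses force every inequality in it to be an equality. First I would record the elementary bound underlying weak duality: for \emph{any} competitor $\nu \in \Pi(\mu_1,\dots,\mu_n)$ and \emph{any} admissible tuple $(g_i)$ with $\sum_{i=1}^n g_i(x_i) \le c(x_1,\dots,x_n)$ pointwise, integrating this inequality against $\nu$ and using that the $i$-th marginal of $\nu$ is $\mu_i$ gives
$$\sum_{i=1}^n \int_{X_i} g_i\, d\mu_i = \int_{X_1\times\dots\times X_n} \Big(\sum_{i=1}^n g_i(x_i)\Big)\, d\nu \le \int_{X_1\times\dots\times X_n} c\, d\nu.$$
Taking the supremum over $(g_i)$ on the left and the infimum over $\nu$ on the right yields $\sup_{(g_i)} \sum_i \int g_i\, d\mu_i \le \inf_{\nu} \int c\, d\nu$.

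Next I would compute the objective value attained by the specific pair in the statement. Since $\mu(M)=1$ and $\sum_{i=1}^n f_i(x_i) = c(x_1,\dots,x_n)$ on $M$,
$$\int c\, d\mu = \int_M c\, d\mu = \int_M \Big(\sum_{i=1}^n f_i(x_i)\Big)\, d\mu = \int_{X_1\times\dots\times X_n} \Big(\sum_{i=1}^n f_i(x_i)\Big)\, d\mu = \sum_{i=1}^n \int_{X_i} f_i\, d\mu_i,$$
where the last equality again uses that the $i$-th marginal of $\mu$ is $\mu_i$. Hence the primal-feasible $\mu$ and the dual-feasible $(f_i)$ have equal objective values.

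Finally I would chain the two displays together:
$$\sum_{i=1}^n \int f_i\, d\mu_i \;\le\; \sup_{(g_i)} \sum_{i=1}^n \int g_i\, d\mu_i \;\le\; \inf_{\nu} \int c\, d\nu \;\le\; \int c\, d\mu \;=\; \sum_{i=1}^n \int f_i\, d\mu_i.$$
All four quantities therefore coincide. In particular $\int c\, d\mu = \inf_\nu \int c\, d\nu$, so $\mu$ is optimal for the primal problem (and the infimum is attained, i.e.\ it is a minimum), and $\sum_i \int f_i\, d\mu_i = \sup_{(g_i)} \sum_i \int g_i\, d\mu_i$, so $(f_i)$ is optimal for the dual problem.

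There is no serious obstacle here; the only point needing a word of care is that all the integrals above be well defined, i.e.\ that $M$ be $\mu$-measurable and that $c \in L^1(\mu)$ and $f_i \in L^1(\mu_i)$ (so that no $\infty-\infty$ arises). In the concrete setting of this paper — $c(x,y,z)=xyz$ continuous and bounded on $[0,1]^3$, uniform one-dimensional marginals, and bounded $f_i$ — this is automatic, so I would simply note these mild hypotheses rather than belabor them.
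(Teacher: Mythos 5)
Your proof is correct and is exactly the standard weak-duality/complementary-slackness argument: integrate the pointwise dual constraint against an arbitrary transport plan to get weak duality, then use $\mu(M)=1$ and equality on $M$ to show the specific pair $(\mu,(f_i))$ closes the chain. The paper itself states \cref{lem:slackness_conditions} as an ``easy fact'' and gives no proof, so there is nothing in the paper to compare against; your write-up supplies the missing (and correct) justification, including the sensible integrability caveat at the end.
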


The aim of this paper is to describe an example of explicit solution to the mass transportation problem on $[0, 1]^3$ ($X_1 = X_2 = X_3 = [0, 1]$) with one-dimensional Lebesgue measure projections and the cost function $c(x, y, z) = xyz$. In this paper we call the measures on $[0, 1]^3$ with Lebesgue projections onto the axes $(3, 1)-$stochastic measures.

In fact, we will construct the primal and dual solutions for any cost function $c(x, y, z) = C(xyz)$ for some continuously differentiable function $C:[0, 1] \to \mathbb{R}$ such that the function $tC'(t)$
strictly increases on the segment $[0, 1]$.

\subsection{Motivation}
Our problem appears to be the simplest generalization of the classical Monge--Kantorovich problem with one-dimensional marginals and quadratic cost function. It seems to be never considered in the literature, though other generalizations mentioned in \cref{subsection:relatedproblems} received some attention.
Note that the particular cost function $(x-y)^2$ (equivalently $- xy$) is mostly used in the classical Monge--Kantorovich theory. A natural replacement of $-xy$ for the case of three variables is $-xyz$. For the cost function $-xyz$ the solution to the primal problem with the same marginals admits a simple structure: it is concentrated on the main diagonal of $[0, 1]^3$ (this can be viewed as a ``continuous rearrangement inequality'' or ``Hardy-Littlewood inequality''). Unlike this, solutions for $xyz$ are non-trivial, that is why we are interested in the cost function $xyz$.

\subsection{Main results}
In this paper we construct the set $M$ which is $c-$monotone for the cost function $c(x, y, z) = xyz$. The set $M$ is the union of three segments and one 2-dimensional part as below: 
\begin{align*}
    &M_x = \{(t, 1 - 2t, 1 - 2t) \mid 0 \le t \le l\},\\
    &M_y = \{(1 - 2t, t, 1 - 2t) \mid 0 \le t \le l\},\\
    &M_z = \{(1 - 2t, 1 - 2t, t) \mid 0 \le t \le l\},\\
    &M_2 = \{(x, y, z) \mid l \le x, y, z \le r = 1 - 2l, xyz = lr^2\}, \\
    &M = M_x \cup M_y \cup M_z \cup M_2,
\end{align*}
where $l \approx 0.0945$, $r \approx 0.8119$ are some transcendent constants.

\begin{figure}[htbp]
    \centering
    \includegraphics[width=0.7\textwidth]{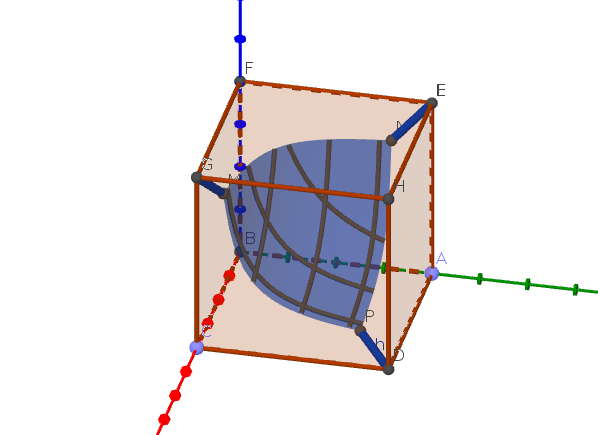}
    \caption{Set $M$}
\end{figure}

Initially, we got an explicit construction of this set from heuristic considerations (see \cref{section:heuristic}). In \cref{section:primal_construction} we see that the integral $\int xyz~d\mu$ is the same for any $(3, 1)-$stochastic measure $\mu$ such that $\supp(\mu) \subset M$ (see \cref{prop:same_integral}). After that we explicitly construct a $(3, 1)-$stochastic measure $\pi$ concentrated on the set $M$ (see the proof of \cref{thm:osexists}). The proof contains nontrivial construction and technical computations. The constructed measure is the primal solution of the related transport problem.

To prove that the measure $\pi$ is the primal solution in \cref{section:dual} we solve a related dual problem for a cost function $c(x, y, z) = C(xyz)$. Our proof works for $C:[0, 1] \to \mathbb{R}$ such that the function $tC'(t)$
strictly increases on the segment $[0, 1]$. Naturally that means $C(xyz)=\widehat{C}(\ln x + \ln y + \ln z)$ where $\widehat{C}$ is a bounded continuously differentiable convex function on $(-\infty, 0]$.

The following theorem gives an explicit construction for the dual solution (see \cref{thm:primal_dual}). Thus, together with \cref{thm:osexists} it gives a characterization of both primal and dual solution.

\begin{theorem}[Main result]\label{thm:intro_dual}
Suppose that $c(x, y, z) = C(xyz)$ for some continuously differentiable function $C:[0, 1] \to \mathbb{R}$ and the function $tC'(t)$
strictly increases on the segment $[0, 1]$. Set: 
$$\hat{f}(s) = \int_{0}^{s}\lambda(t)C'(t\lambda(t))~dt,$$ where the function $\lambda$ is as in \cref{def:lambda_definition}. Then for any constants $C_x$, $C_y$, $C_z$ such that $$C_x + C_y + C_z = C(0) - 2\int_{0}^{1}\lambda(t)C'(t\lambda(t))~dt$$ the following inequality holds 
$$(\hat{f}(x) + C_x) + (\hat{f}(y) + C_y) + (\hat{f}(z) + C_z) \le c(x, y, z)$$
with equality on $M$.

Using the complementary slackness conditions (see \cref{lem:slackness_conditions}) we conclude that for any cost function $C(xyz)$, for which the conditions above are satisfied, any $(3, 1)-$stochastic measure $\pi$ with $\supp(\pi) \subset M$ is a primal solution for a multimarginal mass transportation problem and functions $\hat{f}$ defined in \cref{thm:intro_dual} is a dual solution.
\end{theorem}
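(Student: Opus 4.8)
The plan is to verify the two claims of \cref{thm:intro_dual} in turn: first the pointwise inequality $\hat f(x)+\hat f(y)+\hat f(z)+C_x+C_y+C_z\le c(x,y,z)$ on all of $[0,1]^3$, and then equality on the set $M$; the choice of the constants $C_x+C_y+C_z$ is dictated precisely by making the equality hold at one point of $M$, so once the inequality is known, the normalization is forced. Throughout I will work with the substitution $u=\ln x$, $v=\ln y$, $w=\ln z$ suggested in the excerpt, so that $c$ becomes $\widehat C(u+v+w)$ with $\widehat C$ bounded, $C^1$, and convex on $(-\infty,0]$; the hypothesis that $tC'(t)$ strictly increases is exactly what convexity of $\widehat C$ encodes.

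For the inequality, the key step is to recognize $\hat f$ as (essentially) a Legendre-type transform adapted to the product structure. Writing $g(u)=\hat f(e^u)$, one computes $g'(u)=e^u\lambda(e^u)C'(e^u\lambda(e^u))$, and the defining property of $\lambda$ (from \cref{def:lambda_definition}, which I am assuming) should be that along the ``diagonal-type'' configuration $(x,\lambda(x),\lambda(x))$ — matching the structure of $M_2$ where $xyz=lr^2$ is constant — the three partial derivatives of $c$ agree with $\hat f\,'(x)$, $\hat f\,'(y)$, $\hat f\,'(z)$ respectively. Thus $\lambda$ is chosen so that the first-order (envelope) conditions $\partial_x c=\hat f\,'(x)$ etc.\ hold on $M$. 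To upgrade this to a global inequality, I would fix $y,z$ and study the one-variable function $\psi(x)=c(x,y,z)-\hat f(x)-\hat f(y)-\hat f(z)-C_x-C_y-C_z$; using $\partial_x c(x,y,z)=yz\,C'(xyz)$ and monotonicity of $tC'(t)$, I expect $\psi$ to be concave in the relevant region (or more precisely, that $\hat f$ is chosen so $\hat f(x)$ is a supporting function for $x\mapsto \inf_{y,z}[c(x,y,z)-\hat f(y)-\hat f(z)]$), so that $\psi\ge 0$ reduces to checking it at the boundary and at the critical configurations lying on $M$. The monotonicity of $tC'(t)$ is what guarantees the critical point is a genuine minimum of $-\psi$ and that there are no spurious stationary points.

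For the equality on $M$, I would treat the four pieces separately. On $M_2$, where $l\le x,y,z\le r$ and $xyz=lr^2$, the envelope conditions built into $\lambda$ give $\nabla(c-\hat f\oplus\hat f\oplus\hat f)=0$ tangent to the surface, so $c-\sum\hat f$ is constant on the connected surface $M_2$; evaluating at one convenient point (e.g.\ $x=l$, $y=z=r$, which is the common endpoint shared with the segments) pins that constant to $C(0)-2\int_0^1\lambda(t)C'(t\lambda(t))\,dt$ minus the explicit $\hat f$ values, matching the stated normalization. On each segment $M_x,M_y,M_z$, say $M_x=\{(t,1-2t,1-2t):0\le t\le l\}$, I substitute and reduce to a one-variable identity $\hat f(t)+2\hat f(1-2t)+C_x+C_y+C_z = C\big(t(1-2t)^2\big)$; differentiating in $t$ and using the formula for $\hat f\,'$ should collapse this to an identity provable from the definition of $\lambda$ on $[r,1]$ (the complementary range), and the constant of integration is fixed by the junction point $t=l$ where the segment meets $M_2$. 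The continuity of $\lambda$ and the matching at $t=l$, $x=r$ is what makes the piecewise verification glue into a single global statement.

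The main obstacle I anticipate is the global inequality rather than the equality: proving $\psi\ge 0$ everywhere on $[0,1]^3$ requires ruling out interior minima off $M$, and since $M$ has mixed local dimension (one-dimensional pieces plus a two-dimensional piece), the critical-point analysis of $c(x,y,z)-\hat f(x)-\hat f(y)-\hat f(z)$ does not reduce to a single clean convexity argument — different regions of the cube are ``controlled'' by different pieces of $M$. I expect the argument to require partitioning $[0,1]^3$ according to which coordinate is smallest (by symmetry it suffices to handle $x\le y\le z$), showing that in each region the relevant stationary configuration lies on the corresponding piece of $M$, and invoking strict monotonicity of $tC'(t)$ to get uniqueness of that stationary point; the boundary faces $x=0$ or $x=1$ then need a short separate check. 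Once the inequality and the equality on $M$ are in hand, the final sentence of the theorem is immediate from \cref{lem:slackness_conditions}, since any $(3,1)$-stochastic $\pi$ with $\supp(\pi)\subset M$ satisfies $\pi(M)=1$ and $\sum \hat f(x_i) = c$ on $M$.
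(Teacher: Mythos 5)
Your overall strategy — proving equality on $M$ by showing the gradient of $T(x,y,z)=\hat f(x)+\hat f(y)+\hat f(z)-c(x,y,z)$ vanishes there and invoking path-connectedness, then proving the global inequality by showing the maximum of $T$ must lie on $M$ — is the right one and matches the paper's structure. However, you misidentify what $\lambda$ does, and this is not a cosmetic slip: it removes the key lemma. You guess that $(x,\lambda(x),\lambda(x))\in M$, i.e.\ that $\lambda$ parametrizes a curve inside $M$. In fact the paper's \cref{def:lambda_definition} gives a piecewise function with $\lambda(x)=c/x$ on $[l,r]$, and the relevant characterization (the paper's \cref{prop:description_M}, a nontrivial case analysis) is
\[
(x,y,z)\in M \quad\Longleftrightarrow\quad \lambda(x)=yz,\ \lambda(y)=xz,\ \lambda(z)=xy.
\]
Note this is $\lambda(x)=yz$, not $y=z=\lambda(x)$. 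This equivalence is exactly what you need and exactly what your sketch lacks.

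With that characterization in hand, your anticipated difficulty dissolves. At a maximizer $(x_0,y_0,z_0)$ of $T$ on $[0,1]^3$, the paper computes (for interior $x_0$)
\[
\frac{\partial T}{\partial x}(x_0,y_0,z_0)=\frac{U(x_0\lambda(x_0))-U(x_0y_0z_0)}{x_0}=0,
\qquad U(t)=tC'(t),
\]
and strict monotonicity of $U$ gives $\lambda(x_0)=y_0z_0$ directly (with separate short checks when $x_0\in\{0,1\}$ via a mean-value argument and the limiting values $\lambda(0)=1$, $\lambda(1)=0$). Doing this in each coordinate gives all three equalities, hence $(x_0,y_0,z_0)\in M$ by the characterization; since $T$ is constant on $M$, $M\subset\arg\max T$ and the inequality follows. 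There is no need for your proposed partition of the cube by smallest coordinate, no concavity-of-$\psi$ argument, and no piecewise treatment of $M_x$, $M_y$, $M_z$, $M_2$ for the equality: once $\lambda(x)=yz$, $\lambda(y)=xz$, $\lambda(z)=xy$ hold on $M$, one has $x\lambda(x)=xyz$ directly, so $\nabla T=0$ at every point of $M$ in one line. Your plan as written leaves the global inequality as an admitted open sub-problem (``the main obstacle I anticipate'') and proposes a case analysis that the paper deliberately avoids; the missing ingredient is precisely \cref{prop:description_M}.
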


In \cref{subsection:explicit_dual_xyz} the explicit form of the dual solution for the cost function $c(x, y, z) = xyz$ is specified. It has the following form (see \cref{prop:explicit_dual_xyz}):
\begin{align*}
    &\hat{f}(x) = \begin{cases}
    c \ln l - {1 \over 3}(c\ln c - c) + {1 \over 6}((2x - 1)^3 - (2l - 1)^3), &\text{if } 0 \le x \le l,\\
c\ln x - {1 \over 3}(c\ln c - c), &\text{if } l \le x \le r,\\ 
c\ln r - {1 \over 3}(c\ln c - c) + {1 \over 4}(x^2 - r^2) - {1 \over 6}(x^3 - r^3), &\text{if } r \le x \le 1,
    \end{cases}\\
    &f(t) = g(t) = h(t) = \hat{f}(t),
\end{align*}
for constants $l, r, c$.

In \cref{section:uniqueness} we prove that for any cost function $c(x, y, z) = C(xyz)$ dual solution is unique up to adding constants and measure zero.

Structural results (see \cite{Pass12, brendan_pass}) allow us to estimate the local dimension $d$ of $M$. We apply this results in \cref{section:inertion} to see that $d$ is bounded by $2$. The dimension of the support is important for computations and was studied in details in \cite{Friesecke}. It is interesting that the local dimension of $M$ is not constant as $M$ admits one-dimensional parts and a two-dimensional part. 

This two-dimensional part is a source of non-uniqueness for the primal problem. After the logarithmic change of coordinates cost function $C(xyz)$ becomes convex in sum of coordinates, Lebesgue measure on axis becomes an exponential distribution and two-dimensional part of $M$ becomes a triangle on a plane $x+y+z=\const$. This resembles the situation in \cite[Lemma 4.3]{dim-ger-nen} where the authors consider the multimarginal problem with the same cost function and Lebesgue marginals. They prove that the plan is optimal if and only if it is concentrated on a plane $x+y+z=\const$.

The cost function $xyz$ violates the standard uniqueness assumption, the so-called twist condition (see \cite{KP, Pass11, brendan_pass}). The primal problem admits many solutions. In particular, we show that there exist solutions which are singular with respect to the Hausdorff measure on $M$.
We also propose the following 
\begin{conjecture}\label{conj:hausdorff}
There exists a solution which is concentrated on a set which has Hausdorff dimension less than $2$.
\end{conjecture}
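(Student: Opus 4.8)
The plan is to reduce everything to the two-dimensional piece $M_2$, since the rest of any primal solution is forced. By the construction in \cref{section:primal_construction} a $(3,1)$-stochastic measure $\pi$ with $\supp\pi\subset M$ splits (up to a null set, $M_2$ and $M_x\cup M_y\cup M_z$ meeting only in the point $(l,r,r)$) as $\pi=\pi_0+\pi_2$, where $\pi_0$ is carried by $M_x\cup M_y\cup M_z$ and is \emph{uniquely determined} by the requirement that the marginals on $[0,l]$ be Lebesgue (hence $\pi_0$ is the pushforward of Lebesgue measure on $[0,l]$ under the three obvious parametrizations), while $\pi_2$ is carried by $M_2$ and its three coordinate marginals must be Lebesgue on $[l,r]$. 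Thus it suffices to exhibit one admissible $\pi_2$ whose support has Hausdorff dimension $<2$: then $\pi=\pi_0+\pi_2$ is a primal solution (by \cref{thm:intro_dual} and \cref{lem:slackness_conditions}) whose support has Hausdorff dimension $<2$, because $M_x\cup M_y\cup M_z$ is one-dimensional. Applying the bi-Lipschitz diffeomorphism $(x,y,z)\mapsto(\ln x,\ln y,\ln z)$, which preserves Hausdorff dimension on $[l,r]^3$, turns $M_2$ into the planar triangle $T=\{(u,v,w):u+v+w=\ln(lr^2),\ \ln l\le u,v,w\le\ln r\}$, with vertices the permutations of $(\ln l,\ln r,\ln r)$, and the target becomes: construct a probability measure $\sigma$ on $T$, all three of whose coordinate marginals equal the probability measure $\nu$ with density proportional to $e^u$ on $[\ln l,\ln r]$, with $\dim_{\mathcal H}(\supp\sigma)<2$.

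I would build $\sigma$ by an iterated subdivision of $T$. Subdivide each triangle by its edge midpoints into three ``corner'' triangles $C_1,C_2,C_3$ (translates and $\tfrac12$-scalings of the parent) and one ``middle'' triangle $C_0$ (a $180^\circ$ rotation of it). Tracking coordinate ranges: the part of the $u$-marginal on the lower half-interval is carried by $C_1$ alone, and cyclically for $v,w$, so the corner masses are forced; since $\ln(r/l)\approx 2.15>2\ln 2$, the three corners together carry mass $<1$, so a positive fraction of $\nu$ must be placed on $C_0$. The scheme is: recurse the construction inside $C_1,C_2,C_3$ (with scale-by-scale adjusted mass weights, since $\nu$ is not self-similar — a graph-directed/inhomogeneous IFS), and on each middle triangle $C_0$ dispose of the residual marginal requirement using a \emph{finite} measure supported on line segments (a finite ``shuffle'', solvable because the residual problem on $C_0$ is a small transportation problem). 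The limiting support then lies in $\bigcap_n(\text{union of }3^n\text{ corner triangles of side }2^{-n})$ together with countably many finite segment systems; the first set is a Sierpi\'nski-gasket-type set of Hausdorff dimension $\le\log 3/\log 2<2$ and the second is a countable union of $1$-dimensional sets, so $\dim_{\mathcal H}(\supp\sigma)\le\log 3/\log 2<2$. A softer alternative that produces the same kind of measure: for each $N$ discretize $\nu$ to $\nu_N$ on an $N$-point mesh and take an extreme point $\sigma_N$ of the transportation polytope of measures on the $O(N^2)$ mesh points of $T$ with all marginals $\nu_N$ (nonempty, e.g. by rounding the solution of \cref{thm:osexists}); as in Birkhoff--von Neumann theory, $\supp\sigma_N$ has $O(N)$ points with an acyclic structure in the tripartite incidence graph, and any weak-$*$ limit $\sigma$ of the $\sigma_N$ has marginals $\nu$, hence is admissible.

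The main obstacle is the same in either route: ensuring that the resulting measure is genuinely concentrated on a set of Hausdorff dimension \emph{strictly below} $2$ — singularity with respect to $\mathcal H^2$ (which the paper already obtains) is easy, a dimension drop is not. In the iterated route one must verify that the graph-directed scheme can be tuned so that at \emph{every} scale the residual transportation problem on the corner and middle triangles is feasible and the middle-triangle contributions to each axis telescope exactly to the missing part of $\nu$ — I expect per-scale feasibility to follow from a multimarginal Birkhoff--von Neumann argument and the monotonicity of $tC'(t)$ (equivalently, convexity after the logarithmic change of variables, exactly as exploited in \cref{section:dual}), but making the infinite iteration converge to a measure with the three prescribed marginals \emph{on the nose} while keeping the support inside the $(\log 3/\log 2)$-dimensional gasket structure is delicate. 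In the discretization route the obstacle is to upgrade ``$O(N)$ points with acyclic support'' to a dimension bound on the weak-$*$ limit — such limits can still be two-dimensional, so one must show the acyclic extreme supports stay inside a nested family of few triangles at each scale, which again is where the convexity/monotonicity hypothesis should enter. That this last step resists a routine argument is presumably why the statement is offered only as \cref{conj:hausdorff}.
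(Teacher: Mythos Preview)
This statement is posed as an open conjecture in the paper; in the remark following the extreme-points theorem the authors write explicitly that they ``were not able to verify this conjecture.'' There is therefore no proof in the paper to compare against, and you are right to acknowledge in your closing sentence that your sketch does not close either.

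Your reduction to the triangle via the logarithmic change of variables is sound and matches the paper's treatment of $M_2$. The genuine gap in the iterated scheme --- beyond the convergence issue you already flag --- is the assertion that the residual problem on each middle triangle $C_0$ can be ``disposed of'' by a measure supported on finitely many line segments. The residual marginals on $C_0$ are absolutely continuous (pieces of the exponential density, minus whatever the corner triangles have absorbed), and there is no general principle guaranteeing that a three-marginal problem on a triangle with such data admits a coupling supported on a one-dimensional set; indeed, the paper's own existence construction on $\Delta$ (\cref{thm:osexists}) requires a genuinely two-dimensional layered component in addition to the one-dimensional median part, precisely because the median segments alone cannot carry an arbitrary triple of smooth marginals. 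Without that step, either you recurse on $C_0$ as well --- in which case every sub-triangle is refined and the limiting support is all of $T$ --- or the recursion does not close. Your discretization route has the gap you already name: $O(N)$-point extreme supports need not nest across scales, and their weak-$*$ limits can be two-dimensional. Both routes therefore run into exactly the obstruction the paper leaves open.
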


This conjecture is motivated by \cite[Theorem 4.6]{dim-ger-nen} where the authors construct a primal solution with a fractal support.

\subsection{Related problems}\label{subsection:relatedproblems}
Our example contributes to the list of several known explicit examples and to the list of cost functions where the structure of solutions is investigated in details.
Here are some other examples.
\begin{enumerate}
\item Cost function $$-\sum_{i \ne j} x_i x_j .$$
This cost function is related to the geodesic barycenter problem (see \cite{Carlier, AguehC}).
\item
Determinantal cost \cite{CarNaz}.
\item
Coulomb cost \cite{CFK} (see \cite{CPM} for generalizations). The motivation for this problem comes from mathematical physics.
\item
$\min(x_1, \ldots, x_n)$ (more generally, minumum of affine functions) \cite{KL}.
\item 
Convex function of $x_1+ \ldots + x_n$ (see \cite{dim-ger-nen}).
\end{enumerate}
Some other examples can be found in \cite{brendan_pass}.	

Also, our problem is closely related to $(3, 2)$ problem, studied in \cite{main_work}. In particular, our example can be considered as a solution to the primal $(3, 2)$-problem with the same cost function $xyz$ and the corresponding 2-dimensional projections.
In the $(3, 2)$-problem we consider a modification of the transportation problem. Namely, we deal with the space of measures with fixed projections onto $$X_1 \times X_2, \ X_2 \times X_3, \ X_1 \times X_3.$$
The main result of \cite{main_work} describes a solution to the $(3, 2)$-problem on $[0, 1]^3$ with the cost function $xyz$ ($-xyz$) and two-dimensional Lebesgue measure projections. It turns out that in strong contrast with the classical transportation problem the solution is supported by the fractal set (Sierpi\'nski tetrahedron) $$z = x \oplus y,$$ where $\oplus$ is bitwise addition.
Let us also mention another related important modification: Monge--Kantorovich problem with linear constraints, which has been introduced and studied in \cite{zaev}. 

\section{An heuristic description of $M$}\label{section:heuristic}

In this subsection we collect some informal observations related to our main construction.
In particular, we briefly analyze the cyclical monotonicity property of the support set of our primal solution and describe how to approach the problem numerically. 

Let $M$ be a full measure set for the primal solution. Since all the marginals and the cost function are symmetric with respect to the coordinate axes interchange, we may assume without loss of generality  that $M$ is also symmetric in this sense.

The set $M$  can be chosen to be $c-$cyclically monotone. This is well known for two marginals, for many marginals we refer to the work \cite{griessler}. In particular that means that for any $(x_1, y_1, z_1), (x_2, y_2, z_2) \in M$ one has
\begin{align}\label{eq:c_monotonicity}
\begin{split}
    c(x_1, y_1, z_1) + c(x_2, y_2, z_2) \le c(x_2, y_1, z_1) + c(x_1, y_2, z_2),\\
    c(x_1, y_1, z_1) + c(x_2, y_2, z_2) \le c(x_1, y_2, z_1) + c(x_2, y_1, z_2),\\
    c(x_1, y_1, z_1) + c(x_2, y_2, z_2) \le c(x_1, y_1, z_2) + c(x_2, y_2, z_1).
\end{split}
\end{align}

The \cref{alg:primal_solution_approximation_general} constructing an approximation to a primal solution is based on the inequality above.

\begin{algorithm}
\caption{Primal solution approximation (general version)}
\label{alg:primal_solution_approximation_general}
\begin{algorithmic}[1]
\STATE{Generate three samples: $x_1, x_2, \dots, x_n$ from $\mu_1$, $y_1, y_2, \dots, y_n$ from $\mu_2$, $z_1, z_2, \dots, z_n$ from $\mu_3$; $n$ is a parameter, $\mu_i$ are the marginals in the primal problem.}
\STATE{Define $S := \{(x_k, y_k, z_k) \text{ for } 1 \le k \le n\}$.}
\WHILE{$S$ doesn't satisfy \cref{eq:c_monotonicity}}
\STATE{Take two points $(a_1, b_1, c_1)$ and $(a_2, b_2, c_2)$ from $S$.}
\IF{$c(a_1, b_1, c_1) + c(a_2, b_2, c_2) > c(a_2, b_1, c_1) + c(a_1, b_2, c_2)$}
    \STATE{replace $(a_1, b_1, c_1)$ and $(a_2, b_2, c_2)$ with $(a_2, b_1, c_1)$ and $(a_1, b_2, c_2)$ is $S$}
\ELSIF{$c(a_1, b_1, c_1) + c(a_2, b_2, c_2) > c(a_1, b_2, c_1) + c(a_2, b_1, c_2)$}
    \STATE{replace $(a_1, b_1, c_1)$ and $(a_2, b_2, c_2)$ with $(a_1, b_2, c_1)$ and $(a_2, b_1, c_2)$ is $S$}
\ELSIF{$c(a_1, b_1, c_1) + c(a_2, b_2, c_2) > c(a_1, b_1, c_2) + c(a_2, b_2, c_1)$}
    \STATE{replace $(a_1, b_1, c_1)$ and $(a_2, b_2, c_2)$ with $(a_1, b_1, c_2)$ and $(a_2, b_2, c_1)$ is $S$}
\ENDIF
\ENDWHILE
\STATE{$S$ is an approximation of the primal solution.}
\end{algorithmic}
\end{algorithm}

In our case $c(x, y, z) = xyz$, so
\begin{align*}
    &x_1y_1z_1 + x_2y_2z_2 \le x_1y_1z_2 + x_2y_2z_1,\\
    &(x_1y_1 - x_2y_2)(z_1 - z_2) \le 0.
\end{align*}
It follows that if $(x_1, y_1, z_1), (x_2, y_2, z_2) \in M$ then
\begin{align}\label{cond:weak_monotonicity}
\begin{split}
&z_1 < z_2 \Rightarrow x_1y_1 \ge x_2y_2 \text{ and by the symmetry}\\
&y_1 < y_2 \Rightarrow x_1z_1 \ge x_2z_2,\\
&x_1 < x_2 \Rightarrow y_1z_1 \ge y_2z_2.
\end{split}
\end{align}

\begin{algorithm}
	\setcounter{ALC@unique}{0}
\caption{Primal solution approximation (faster version)}
\label{alg:primal_solution_approximation_faster}
\begin{algorithmic}[1]
\STATE{Generate three samples: $x_1, x_2, \dots, x_n$ from $\mu_1$, $y_1, y_2, \dots, y_n$ from $\mu_2$, $z_1, z_2, \dots, z_n$ from $\mu_3$; $n$ is a parameter, $\mu_i$ are the marginals in the primal problem.}
\STATE{Define $S := [(x_k, y_k, z_k) \text{ for } 1 \le k \le n]$.}
\WHILE{$S$ doesn't satisfy \cref{cond:weak_monotonicity}}
\STATE{Sort $S$ by the first coordinate in the ascending order. Denote by $(a_k, b_k, c_k)$ the $k-$th item of $S$ after sorting, $1 \le k \le n$.}\label{alg:step_sort_1}
\STATE{Update $S := [(a_k, b_{\sigma(k)}, c_{\sigma(k)}) \text{ for } 1 \le k \le n]$ where $\sigma \in S_n$ and the sequence $b_{\sigma(k)}c_{\sigma(k)}$ is descending.}\label{alg:step_sort_2}
\STATE{Repeat \cref{alg:step_sort_1} and \cref{alg:step_sort_2} for the second and third coordinates.}
\ENDWHILE
\STATE{$S$ is an approximation of the primal solution.}
\end{algorithmic}
\end{algorithm}

\begin{figure}[htbp]
    \centering
    \includegraphics[width=0.7\textwidth]{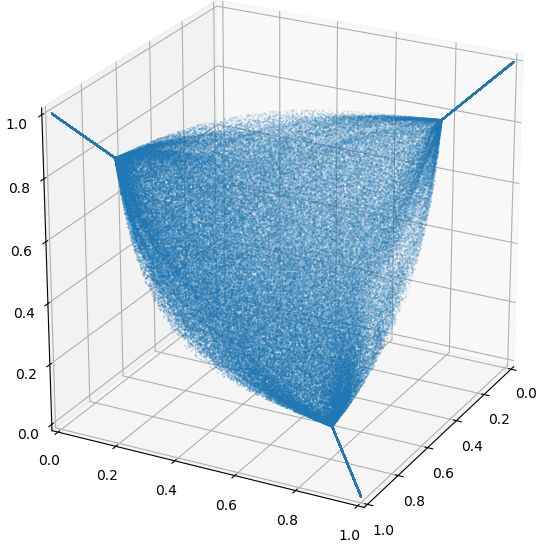}
    \caption{The final set $S$ for $n = 200000$.}
    \label{fig:sorted_M}
\end{figure}

This allows us to improve the performance of \cref{alg:primal_solution_approximation_general} using sortings. That leads us to a much faster version, namely \cref{alg:primal_solution_approximation_faster}. We were able to run an \cref{alg:primal_solution_approximation_faster} for $n=2 \times 10^5$. 

Despite the fact that this algorithm does not necessarily converges to the solution
for all admissible data, our numerical experiments demonstrate that the algorithm works well in
many cases. A proof of convergence for a suitable set of admissible data must be investigated.

On \cref{fig:sorted_M} it is shown a scatter plot of $S$ after the completion of the algorithm. As one can see on this graph, the set $M$ consists of four parts. There exist real values $0 < l < r < 1$ such that if $l \le x, y, z \le r$ and $(x, y, z) \in M$ then $(x, y, z)$ lies on 2-dimensional part $M_2$. If $0 \le x \le l$ and $(x, y, z) \in M$ then $(x, y, z)$ lies on a 1-dimensional curve $M_x = (p(t), p_y(t), p_z(t))$, $0 \le t \le 1$. By the virtue of the symmetry $p_y(t) = p_z(t) = q(t)$. Also $q(0) = 1$, $q(1) = r$ and $q(t)$ strictly decrease; $p(0) = 0$, $p(1) = l$ and $p(t)$ strictly increase.

By the virtue of the symmetry, if $(x, y, z) \in M$ and $0 \le y \le l$, then this point lies on a curve $M_y = (q(t), p(t), q(t))$, and if $0 \le z \le l$ then $(x, y, z)$ lies on a curve $M_z = (q(t), q(t), p(t))$.

Let $\nu$ be a primal solution and $\nu_x$, $\nu_y$, $\nu_z$ be the restrictions of $\nu$ to $M_x, M_y$ and $M_z$ accordingly. Suppose $F_x(a) = \nu_x(\{(p(t), q(t), q(t)) \mid 0 \le t \le a\})$. Define $F_y$ and $F_z$ in a similar way. By the virtue of the symmetry we can assume that $F_x(a) = F_y(a) = F_z(a) = F(a)$ for any $0 \le a \le 1$.

For any $0 \le a \le 1$ one has 
\begin{multline*}
    \nu(\{0 \le x \le p(a)\}) = \nu_x(\{(p(t), q(t), q(t)) \mid 0 \le t \le a\}) = F_x(a) \\
    = {1 \over 2}(F_y(a) + F_z(a)) = {1 \over 2}\nu(\{q(a) \le x \le 1\}).
\end{multline*}
Since all the marginals are Lebesgue measures on the segments $[0, 1]$, one has $2p(a) = 1 - q(a)$ for any $0 \le a \le 1$. 

Thus $$M_x = (t, 1 - 2t, 1-2t)$$
$$   M_y = (1 - 2t, t, 1 - 2t)$$ 
and $$M_z = (1 - 2t, 1-2t, t),$$ $0 \le t \le l$; $r = 1 - 2l$. That means that 1-dimensional parts of the set $M$ are segments.

The set $M_x$ is $c-$cyclically monotone. In particular, if $1 - 2t_1 < 1 - 2t_2$, then $t_1(1 - 2t_1) \ge t_2(1 - 2t_2)$ or, equivalently, the function $t(1 - 2t)$ increases on the set $[0, l]$. The derivative of this function is $1 - 4t \ge 0$ for any $0 \le t \le l$. That means that $0 < l \le {1 \over 4}$.

Let us describe the set $M_2$. As we know from the general duality theory, there exist functions $$f, g, h: [l, r] \to \mathbb{R}$$ satisfying  $f(x) + g(y) + h(z) \le xyz$ and the equality holds provided $(x, y, z) \in M_2$. Again by symmetry we can assume that $f(x) = g(x) = h(x)$ for any $l \le x \le r$. 

Suppose that $f$ is continuously differentiable. Let $(x, y, z) \in M_2$ and $l \le x, y, z \le r$. Then that point is an inner maximum point of the function $$F(x, y, z) = f(x) + f(y) + f(z) - xyz.$$ That means that 

$$\nabla F = (f'(x) - yz, f'(y) - xz, f'(z) - xy)^T = \vec{0}.$$

So if $(x, y, z) \in M_2$ then $xf'(x) = yf'(y) = zf'(z) = xyz$. From \cref{fig:sorted_M} we see that if we fix $x=l$, then for any $l \le y \le r$ there exists $l \le z \le r$ such that $(x, y, z) \in M_2$. Then the function $tf'(t)$ is equal to a constant $C=lf'(l)$ for any $l \le t \le r$. In this case if $(x, y, z) \in M_2$ then $xyz = xf'(x) = C$. Since $(l, r, r) \in M_2$ the constant $C$ is equal to $lr^2$.

\section{Solving the primal problem}\label{section:primal_construction}
Summarizing the facts about the set $M$, which supports the primal solutions, we realize that one can try to find $M$
in the following form:
\begin{align*}
    &M_x = \{(t, 1 - 2t, 1 - 2t) \mid 0 \le t \le l\},\\
    &M_y = \{(1 - 2t, t, 1 - 2t) \mid 0 \le t \le l\},\\
    &M_z = \{(1 - 2t, 1 - 2t, t) \mid 0 \le t \le l\},\\
    &M_2 = \{(x, y, z) \mid l \le x, y, z \le r = 1 - 2l, xyz = lr^2\},
\end{align*}
where $l$ is an unknown parameter; $0 \le l < {1 \over 4}$.

\begin{proposition}\label{prop:same_integral}
An integral $\int xyz~d\nu(x, y, z)$ is the same for any probability measure  $\nu$ such that $\Pr_x(\nu) = \Pr_y(\nu) = \Pr_z(\nu) = \lambda$ where $\lambda$ is the Lebesgue measure on the segment $[0, 1]$ and $\supp(\nu) \subset M$.
\end{proposition}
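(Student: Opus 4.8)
The plan is to compute $\int xyz\,d\nu$ by slicing $M$ into its four pieces and using the fact that all projections are Lebesgue. Write $\nu = \nu_x + \nu_y + \nu_z + \nu_2$ where $\nu_x,\nu_y,\nu_z$ are the restrictions of $\nu$ to $M_x,M_y,M_z$ and $\nu_2$ is the restriction to $M_2$. On each one-dimensional piece the value of the cost is forced: on $M_x$ (and by symmetry on $M_y$, $M_z$) one has $xyz = t(1-2t)^2$, which is a fixed function of the parameter $t$; on $M_2$ the cost is the constant $lr^2$ by construction. Hence
\begin{align*}
\int xyz\,d\nu &= \int_{M_x} t(1-2t)^2\,d\nu_x + \int_{M_y}\!\cdots + \int_{M_z}\!\cdots + lr^2\,\nu_2(M_2).
\end{align*}
So the whole integral is determined once we know the three (equal, by symmetry) pushforwards of $\nu_x,\nu_y,\nu_z$ to the $t$-parameter and the total mass $\nu_2(M_2)$; I must show these are the same for every admissible $\nu$.

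First I would extract the distribution of the one-dimensional parts from the marginal condition exactly as in the heuristic section. Fix $a\in[0,l]$ and look at the set $\{x \le a\}$. A point of $M$ has first coordinate $\le a$ only if it lies on $M_x$ with parameter $t\le a$ (since on $M_y,M_z$ the first coordinate is $1-2t \ge 1-2l = r > l \ge a$, and on $M_2$ the first coordinate is $\ge l \ge a$, with strict inequality when $a<l$). Therefore $\nu(\{x\le a\}) = \nu_x(\{t \le a\})$, and the $x$-marginal being Lebesgue forces $\nu_x(\{t\le a\}) = a$ for all $a\in[0,l]$. Thus the pushforward of $\nu_x$ to the $t$-variable is Lebesgue measure on $[0,l]$, independent of $\nu$; likewise for $\nu_y$ and $\nu_z$. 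In particular $\nu_x(M_x)=\nu_y(M_y)=\nu_z(M_z)=l$, and consequently $\nu_2(M_2) = 1 - 3l$ for every admissible $\nu$.

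Plugging back in, $\int_{M_x} t(1-2t)^2\,d\nu_x = \int_0^l t(1-2t)^2\,dt$ (a constant), the same for the $M_y$ and $M_z$ terms, and the $M_2$ term equals $lr^2(1-3l)$; hence
\begin{align*}
\int xyz\,d\nu = 3\int_0^l t(1-2t)^2\,dt + lr^2(1-3l),
\end{align*}
which does not depend on $\nu$. The one subtlety to handle carefully is the overlap of the pieces: $M_x$, $M_y$, $M_z$ meet $M_2$ only at finitely many boundary points (e.g. $(l,r,r)$ and permutations), which carry no $\nu$-mass because the marginals are non-atomic, so the decomposition $\nu = \nu_x+\nu_y+\nu_z+\nu_2$ is unambiguous up to a null set; I would also note that a point of $M$ can have more than one small coordinate only at these same boundary points, so the sets $M_x\cap\{x\le a\}$ really are as described. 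That boundary/overlap bookkeeping, rather than any computation, is the only place requiring care.
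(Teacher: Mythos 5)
Your proof is correct and follows essentially the same route as the paper's: decompose $\nu$ into its restrictions to $M_x,M_y,M_z,M_2$, observe that $xyz$ is a fixed function of the parameter on each one-dimensional piece and constant on $M_2$, and deduce from the Lebesgue marginal that each piece carries a determined amount of mass (note that $r-l = 1-3l$, so your formula agrees verbatim with the paper's $3\int_0^l t(1-2t)^2\,dt + lr^2(r-l)$). The only difference is that you spell out why the $x$-projection of $\nu_x$ must be Lebesgue on $[0,l]$ — by checking that $M_x$ is the only part of $M$ with small first coordinate — whereas the paper asserts this directly; that extra bookkeeping is a sound addition rather than a deviation.
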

\begin{proof}
Let $\nu_x$, $\nu_y$, $\nu_z$ and $\nu_2$ be restrictions of $\nu$ to $M_x$, $M_y$, $M_z$ and $M_2$ respectively. Since the projection of $\nu_x$ on the first marginal is a restriction of $\lambda$ to the segment $[0, l]$, one has
$$
\int_{M_x} xyz~d\nu_x(x, y, z) = \int x(1 - 2x)(1-2x)~d\nu_x(x, y, z) = \int_0^lx(1-2x)^2~dx
$$
Similarly
$$
\int_{M_y} xyz~d\nu_y(x, y, z) = \int_{M_z} xyz~d\nu_z(x, y, z) = \int_0^lx(1-2x)^2~dx.
$$

Finally, the projection of $\nu_2$ on the first marginal is a restriction of $\lambda$ to the segment $[l, r]$. So
$$
\int_{M_2}xyz~d\nu_2 = lr^2\cdot \nu_2(\{l \le x \le r\}) = lr^2(r - l).
$$

Consequently $\int xyz~d\nu(x, y, z) = 3\int_0^lx(1 - 2x)^2~dx + lr^2(r - l)$ and this integral does not depend on $\nu$.
\end{proof}

So we only have to find any measure with desired projections such that its support is contained in $M$. In \cref{thm:primal_dual} we find an appropriate triple of functions and by \cref{lem:slackness_conditions} we rigorously prove that any $(3, 1)$-stochastic measure on $M$ is indeed a primal solution.

First, we define a measure on the three one-dimensional segments. Let $L = \sqrt{l^2 + 2(1 - r)^2}$ be the lengths of these segments. We set on every segment a uniform measure with density $\frac{l}{L}$.
Clearly, projections of two segments coincide with $[r, 1]$, the densities are equal to $\frac{L}{1 - r} \cdot \frac{l}{L} = \frac{1}{2}$. Their sum is the Lebesgue measure on $[r, 1]$.
The projection of the third interval is a measure on $[0, l]$, its density equals $\frac{L}{l} \cdot \frac{l}{L} = 1$.

After this, it remains to determine the measure on the remaining two-dimensional set such that its projection on each of the axes is uniform.

Let us make the following change of coordinates: $$u := {\ln x - \ln l \over \ln r - \ln l}, v := {\ln y - \ln l \over \ln r - \ln l}, w := {\ln z - \ln l \over \ln r - \ln l}.$$
Two-dimensional set
$$xyz = c, l \le x, y, z \le r$$ admits the following parametrization: $$u + v + w = 2, 0 \le u, v, w \le 1.$$
One has the following relations:
\begin{align*}
    &dx = de^{u(\ln r - \ln l) + \ln l} = l\ln \left(r \over l \right) \left({r \over l}\right)^udu = l\ln(\alpha) \alpha^u du,\\
    &dy = l\ln(\alpha) \alpha^v dv,\\
    &dz = l\ln(\alpha) \alpha^w dw,
\end{align*}
where $\alpha = \frac{r}{l}$.

Clearly, the problem is reduced to the following problem: find a measure on the triangle $u + v + w = 2, 0 \le u, v, w \le 1$ with exponential projections onto the axes.

\subsection{Necessary conditions for existence of a measure on the triangle with given projections}

\

One can put the problem into a more general setting. 
When there exists a measure $\mu$ on the triangle $$\Delta = \{x + y + z = 2,~0 \le x, y, z \le 1\}$$ with given projections $\mu_x$, $\mu_y$, $\mu_z$?

In what follows we are only interested in the case $\mu_x = \mu_y = \mu_z = \pi$. A necessary condition is given in the following lemma.

\begin{lemma} \label{inequality}
Let function $f : [0, 1] \to \mathbb{R}$ satisfy $f(x) + f(y) + f(z) \le 0$ for $x + y + z = 2$ and there exist a measure $\mu$ on $\Delta$, whose projections 
onto the axes are equal to $\pi$. Then $\int_0^1 f(x) d\pi \le 0$.
\end{lemma}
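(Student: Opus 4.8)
The plan is to exploit the fact that the triangle $\Delta$ is precisely the set where the affine function attains its minimum, and to transfer the pointwise inequality $f(x)+f(y)+f(z)\le 0$ on $\Delta$ into an integral inequality by integrating against the measure $\mu$ and using its marginals. Concretely, I would start from the hypothesis that $\mu$ is a probability measure on $\Delta$ whose three projections all equal $\pi$. Integrating the function $(x,y,z)\mapsto f(x)+f(y)+f(z)$ against $\mu$ gives
\[
\int_{\Delta}\bigl(f(x)+f(y)+f(z)\bigr)\,d\mu(x,y,z)\le 0,
\]
since the integrand is $\le 0$ everywhere on $\Delta=\supp(\mu)$ (or $\mu$-a.e.\ on it).

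Next I would split the left-hand side into three integrals and use the marginal condition. Because $\mu$ lives on $\Delta$, each of $x,y,z$ is a coordinate function on the support, and the pushforward of $\mu$ under, say, the first coordinate projection is exactly $\pi$; hence
\[
\int_{\Delta} f(x)\,d\mu(x,y,z)=\int_0^1 f(x)\,d\pi(x),
\]
and similarly for the $y$- and $z$-terms, all three being equal to $\int_0^1 f\,d\pi$. Therefore the displayed inequality becomes $3\int_0^1 f\,d\pi\le 0$, which gives $\int_0^1 f\,d\pi\le 0$ after dividing by $3$.

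The only genuine point requiring a word of care is measurability and integrability: one needs $f$ to be (at least) $\pi$-measurable and $f$ restricted to $[0,1]$ to be $\pi$-integrable so that the three integrals are well defined and the splitting $\int (f(x)+f(y)+f(z))\,d\mu=\int f(x)\,d\mu+\int f(y)\,d\mu+\int f(z)\,d\mu$ is legitimate; in the applications $f$ will be continuous (or a difference of the dual potentials with the affine function), so this is harmless, but I would note it explicitly. A second minor point is to be precise about whether "$f(x)+f(y)+f(z)\le 0$ on $x+y+z=2$" is assumed for \emph{all} such triples in $[0,1]^3$ or only $\mu$-a.e.; either suffices, since $\mu$ is supported on $\Delta$. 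I do not expect any real obstacle here — the lemma is a direct "integrate the dual constraint against the primal plan" argument, the multimarginal analogue of the weak-duality half of Kantorovich duality, and the proof is a few lines.
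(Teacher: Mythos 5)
Your argument is exactly the paper's: integrate $f(x)+f(y)+f(z)$ against $\mu$, use that it is $\le 0$ pointwise on $\Delta$, and split into three marginal integrals to obtain $3\int_0^1 f\,d\pi\le 0$. The measurability remarks are sensible housekeeping but do not change the route; the proof matches the paper's.
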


\begin{proof}
We compute $\int_\Delta (f(x) + f(y) + f(z))~d\mu$. On the one hand, it is nonpositive, since at each point $f(x) + f(y) + f(z) \le 0$. On the other hand,
$$
\int_\Delta (f(x) + f(y) + f(z))d\mu = 3\int_0^1 f(x)d\pi(x) \le 0.
$$

\end{proof}

In particular, for the function $f(x)=x-\frac{2}{3}$ one has $f(x) + f(y) + f(z) = 0$ for $x + y + z = 2$. So we get

\begin{equation} \label{main_traingle_equality}
\int_0^1 \left(x - {2 \over 3}\right) d\pi(x) = 0.
\end{equation}

Check this for $d\pi = \alpha^xdx$:
$$
\int_0^1 \left(x - {2 \over 3}\right)d\pi = \int_0^1 \left(x - {2 \over 3}\right)\alpha^xdx = {{\alpha(\ln\alpha - 3) + 3 + 2\ln\alpha} \over 3\ln^2\alpha}
$$

Thus, $\alpha$ must satisfy
\begin{equation} \label{alpha_equation}
{\alpha(\ln\alpha - 3) + 3 + 2\ln\alpha} = 0.
\end{equation}
Apply the relation $\alpha = {1 - 2l \over l}$:

$$
\alpha(\ln\alpha - 3) + 3 + 2\ln\alpha 
= {\ln(1 - 2l) - \ln l - 3 + 9l \over l} = 0.
$$

\begin{figure}
    \centering
    \includegraphics[width=\textwidth]{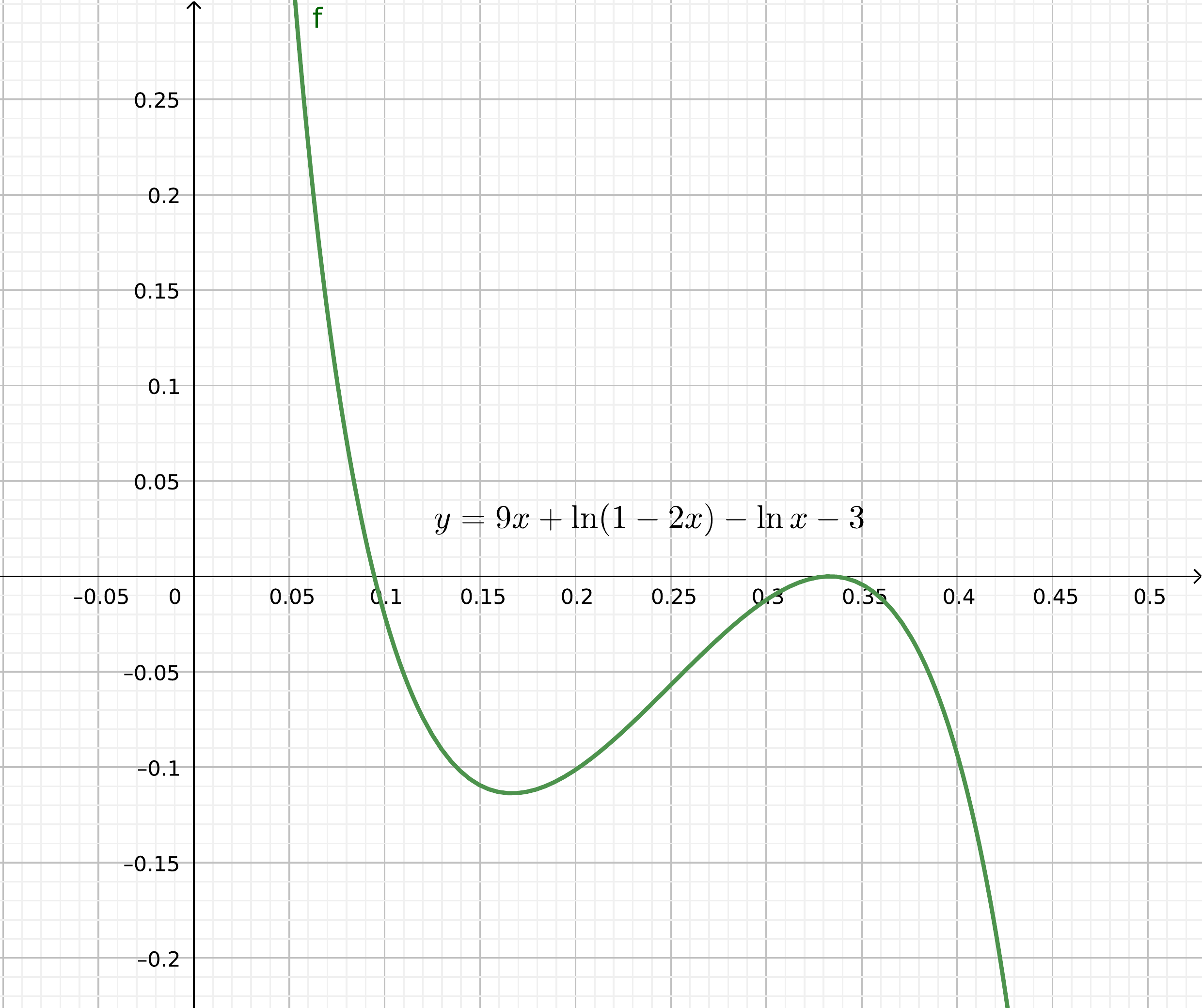} 
    \caption{A graph of a function $y(x) = 9x + \ln(1 - 2x) - \ln x - 3$}
    \label{fig:plot_l}
\end{figure}

It is seen from \cref{fig:plot_l} that the function $\ln(1 - 2l) - \ln l - 3 + 9l$ has exactly one root lying in the interval $\left(0, {1 \over 4}\right)$, namely $l\approx 0.0945$. So $r \approx 0.8109$ and $\alpha \approx 8.577$.

Let us prove that there is a unique root of $h$ lying inside $\left(0, {1 \over 3}\right)$. To this end we find the derivative of $h(l) = 9l + \ln(1 - 2l) - \ln l - 3$ and show it is negative for $l < \frac{1}{6}$ and positive for $\frac{1}{6} < l < \frac{1}{3}$. Indeed,
$$
h'(l) = (9l + \ln(1 - 2l) - \ln l - 3)' = -\frac{(3l - 1)(6l - 1)}{l(1-2l)},
$$
and it is easy to check the signs.

For $l\to+0$ one has $$h(l) \to \infty.$$ For $l = {1 \over 6}$ there holds
$$
h\left(\frac{1}{6}\right) = 2\ln{2} - {3 \over 2} < 0,
$$
since $\ln 2 \approx 0.69 < {3 \over 4}$.
For $l = {1 \over 3}$ there holds
$$
h\left(\frac{1}{3}\right)= 3 + \ln\left(1 - {2 \over 3}\right) - \ln\left(1 \over 3\right) - 3 = 0.
$$

It follows that on the interval {$\left(0, {1 \over 4}\right)$} function $h(l)$ has exactly one root {and this root is less than $1 \over 6$.}

Assumption of \cref{inequality} is satisfied for the following broad class of functions. 
\begin{lemma}
Let $f(x): [0, 1] \to \mathbb{R}$ be convex on $\left[0, {2 \over 3}\right]$ and $f(2x) + 2f(1 - x) = 0$ for $0 \le x \le {1 \over 3}$. Then $f(x) + f(y) + f(z) \le 0$ for $x + y + z = 2$.
\end{lemma}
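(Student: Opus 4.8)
The plan is to reduce the claim to a single-variable convexity inequality by eliminating one variable via the constraint $z = 2 - x - y$. Assume without loss of generality that $x \le y \le z$; the constraint $x + y + z = 2$ with $0 \le x, y, z \le 1$ then forces $x \le \tfrac23 \le z$, while $y$ may lie on either side of $\tfrac23$. Write $x = 2a$ and $z = 1 - a$ for the appropriate $a \in [0, \tfrac13]$ extracted from the functional equation, so that the hypothesis $f(2a) + 2f(1-a) = 0$ can be invoked. The idea is to compare $f(x) + f(y) + f(z)$ against the ``balanced'' configuration where the functional equation gives exactly $0$, and to show that moving from the balanced triple to $(x, y, z)$ can only decrease the sum.

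The key step is the following observation: fix $s = y + z$ (equivalently fix $x$). Among all pairs $(y, z)$ with $y + z = s$ and $0 \le y, z \le 1$, I claim $f(y) + f(z)$ is largest when the pair is as spread out as possible toward the configuration matching the functional equation, and more precisely that the relevant comparison point is $(y_0, z_0) = (1 - x/2, 1 - x/2)$ — no wait, that does not respect $y + z = 2 - x$. Rather, with $x = 2a$ fixed, the balanced companion is $y_0 = z_0 = 1 - a$, which does satisfy $y_0 + z_0 = 2 - 2a = 2 - x$, and there $f(x) + f(y_0) + f(z_0) = f(2a) + 2f(1-a) = 0$ by hypothesis. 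So it suffices to prove
$$f(y) + f(z) \le 2f(1 - a) = f(y_0) + f(z_0)$$
whenever $y + z = 2 - 2a$ and $0 \le y, z \le 1$. If both $y, z \le \tfrac23$, this is immediate from convexity of $f$ on $[0, \tfrac23]$: the sum $f(y)+f(z)$ over a fixed-sum segment is maximized at the endpoints and minimized in the middle, but here $y_0 = z_0 = 1-a$ might exceed $\tfrac23$ (it does when $a < \tfrac13$), so the balanced point need not lie in the region of convexity. This is the main obstacle.

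To handle it, I would use the functional equation to transfer information about $f$ on $[\tfrac23, 1]$ back to $[0, \tfrac23]$: the relation $f(1 - a) = -\tfrac12 f(2a)$ expresses values of $f$ near $1$ in terms of values near $0$, and convexity of $f$ on $[0, \tfrac23]$ controls the latter. Concretely, parametrize $y = 1 - a - t$, $z = 1 - a + t$ for $t \in [0, \min(1-a, a + \text{something})]$ (the range of $t$ is limited by $z \le 1$, i.e.\ $t \le a$, and by $y \ge 0$). For $t \le a$ we have $z = 1 - a + t \le 1$ and $z \ge 1 - a \ge \tfrac23$, so $z$ is in the ``upper'' regime; meanwhile $y = 1 - a - t$ ranges down from $1-a$. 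I would write $\phi(t) = f(1-a-t) + f(1-a+t)$ and show $\phi(t) \le \phi(0)$ for all admissible $t$, by showing $\phi$ is non-increasing; computing $\phi'(t) = f'(1-a+t) - f'(1-a-t)$ and using the functional equation differentiated (which relates $f'$ at $1-a$ to $f'$ at $2a$) together with monotonicity of $f'$ on $[0,\tfrac23]$ coming from convexity. The endpoint case $y = 0$, $z = 2 - 2a$ — impossible unless $a = \tfrac12$, which is out of range — shows the constraint $t \le a$ is the binding one, and at $t = a$ one gets $y = 1 - 2a = 2 - x - z$ with $z = 1$, a boundary configuration handled directly by the functional equation at the corresponding parameter. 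Assembling these pieces gives $f(x) + f(y) + f(z) \le f(x) + 2f(1-a) = 0$, as required. I expect the delicate point to be verifying that $\phi' \le 0$ across the whole admissible range of $t$ using only convexity on $[0, \tfrac23]$ plus the functional equation, since $\phi$ straddles the two regimes.
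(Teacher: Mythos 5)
Your outline gets the right target (fix $x=2a$, show $f(y)+f(z)\le 2f(1-a)$ whenever $y+z=2-2a$) and you correctly flag the obstacle: when $y$ drops below $\tfrac23$ while $z$ stays above, an argument that genuinely mixes the two regimes is needed. But you stop exactly there and leave that verification as a plan, not a proof; as written the lemma is not established. The missing ingredient, which you gesture at with ``the functional equation differentiated,'' is the observation that the relation $f(1-s)=-\tfrac12 f(2s)$ on $[0,\tfrac13]$ forces $f$ to be \emph{concave} on $[\tfrac23,1]$: on that interval $f(u)=-\tfrac12 f(2-2u)$ is a negative multiple of a convex function composed with an affine map, hence concave. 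Once you have that, your $\phi(t)\le\phi(0)$ follows for the range where both $1-a\pm t\in[\tfrac23,1]$ by concavity alone; for the remaining range one can verify $f'(1-a+t)=f'(2(a-t))\le f'(1-a-t)$ using $f'(1-s)=f'(2s)$, the inequality $2(a-t)\le 1-a-t$ (which reduces to $t\ge 3a-1$ and is automatic for $a\le\tfrac13$), and monotonicity of $f'$ on $[0,\tfrac23]$. Even then you must take care that $f$ was not assumed differentiable, so this should be done with one-sided derivatives or slopes of secants.

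The paper's proof takes a cleaner two-move route that avoids the mixed regime entirely. First, if two of the three variables lie below $\tfrac23$, it uses convexity on $[0,\tfrac23]$ to \emph{spread them apart} at fixed sum until one of them is $0$ or $\tfrac23$; the $0$ case violates $x+y+z=2$, so one of them must hit $\tfrac23$. This reduces to $x\le\tfrac23\le y,z$. Then, using precisely the concavity on $[\tfrac23,1]$ described above, it \emph{pulls $y$ and $z$ together} at fixed sum to the balanced configuration $y=z=1-x/2$, where the functional equation gives exactly $0$. Your approach collapses the second move but pays for it by having to handle the case where $y$ and $z$ straddle $\tfrac23$; the paper's preliminary spreading step eliminates that case up front. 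If you add the initial convexity-spreading reduction (or carry out the mixed-regime derivative computation without assuming differentiability), your argument would close.
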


\begin{proof}
Assume that $f(x) + f(y) + f(z) > 0$ for some $x$, $y$ and $z$ satisfying $x + y + z = 2$. 
Let among $x$, $y$, $z$ be at least two numbers (say, $x \le y$) less than $\frac{2}{3}$.
Replace these numbers by $x', y'$ in such a way that $x+y=x'+y'$, $[x', y'] \subset\left[0, \frac{2}{3}\right]$ and either $x'=0$ or $y'=\frac{2}{3}$.
By convexity $f(x') + f(y') + f(z) \ge f(x) + f(y) + f(z) > 0$.
If $x'=0$, then $y' \le \frac{2}{3}$, and $z \le 1$, thus $x'+y' + z < 2$. Hence $y' =\frac{2}{3}$.

Thus from the very beginning one can assume that $x \le\frac{2}{3}$ and $y, z \ge \frac{2}{3}$.
If $x = y = z = \frac{2}{3}$, then $f(x) + f(y) + f(z) = f\left(2 \cdot \frac{1}{3}\right) + 2f\left(1 - \frac{1}{3}\right) = 0$.
Repeating the same trick and using concavity of $f$ on $\left[\frac{2}{3}, 1\right]$ one can reduce the problem to the case $y=z$. But for any triple $x, y = z = 1 - \frac{x}{2}$ there holds $f(x) + f(y) + f(z) = 0$, which contradicts the assumption $f(x) + f(y) + f(z) > 0$.
\end{proof}

\subsection{Description of projections of measure classes on the triangle}

We will consider special classes of measures on $\Delta$ and describe their projections onto the axes.

First, consider the Lebesgue measure on $\Delta$. It can be normalized in such a way that the measure of the whole triangle is equal to $\frac{1}{2}$. We denote the normalized measure by $\lambda_\Delta$.
Projecting it to any hyperplane $\{x=0\}$, $\{y=0\}$, $\{z=0\}$, we get a triangle with the usual Lebesgue measure. In what follows we shall consider the densities with respect to this normalized measure.

\begin{definition}
Let $\mu$ be a measure on $\Delta$ absolutely continuous with respect to $\lambda_\Delta$. For any point $(x, y, z) \in \Delta$ define $M(x, y, z) = \min(1 - x, 1 - y, 1 - z)$. We call a measure $\mu$ \textbf{\textit{layered}} if for any $t$ the density of $\mu$ is constant on a set $M(x, y, z) = t$, that is density depends only on $M(x, y, z)$. 
\end{definition}

It is easy to see that $M$ is proportional to the distance from the point to the nearest side of $\Delta$. Therefore, points with constant $M$ form a triangle homothetic to the original one, with the same center. It is also easy to see that due to the symmetry of the layered measure, its projection on all three axes will be the same. Also note that $M$ takes values only in $\left[0, \frac{1}{3}\right]$.

\begin{definition}
We say that a function $p: \left[0, \frac{1}{3}\right] \to \mathbb{R}$ \textit{\textbf{generates}} a layered measure $\mu$ if $\frac{d\mu}{d\lambda_\Delta}(x, y, z) = p\left(M(x, y, z)\right)$.
\end{definition}

Let us find the projections of a layered measure $\mu$ generated by $p$ to the coordinate axes.

\begin{proposition}
 \label{prop:layered_projection}
Let $\mu$ be a layered measure generated by a function $p$. Let $p_*: [0, 1] \to \mathbb{R}_+$ be the density of the projection of this measure onto an axis. Then
\begin{align*}
p_*(x) = 
\begin{cases}
2\int_0^\frac{x}{2}p(t)dt,&\text{ if } x \le \frac{2}{3},\\
(3x - 2)p(1 - x) + 2\int_0^{1 - x}p(t)dt,&\text{ if } x \ge \frac{2}{3}
\end{cases}
\end{align*}
\end{proposition}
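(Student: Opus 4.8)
The plan is to compute the projection of $\mu$ onto, say, the $z$-axis directly by slicing $\Delta$ with the hyperplanes $\{z = \text{const}\}$ and integrating the layered density over each slice. Fix $z = x$ (I will use $x$ for the value of the third coordinate, matching the statement). The slice $\Delta \cap \{z = x\}$ is the segment $\{(s, 2 - x - s, x) : \max(0, 1-x) \le s \le \min(1, 1-x)\}$; one must be careful that for $x \le \tfrac{2}{3}$ this is the segment $s \in [1 - x, \min(1, \dots)]$... actually the constraint $0 \le s \le 1$ together with $0 \le 2 - x - s \le 1$ gives $s \in [\max(0, 1-x),\, \min(1, 2-x)]$, so for small $x$ the slice is long and for large $x$ it is short. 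The density on this slice is $p(M(s, 2-x-s, x))$, and along the slice $M$ varies piecewise-linearly: starting from one endpoint it increases at unit rate (up to a scaling from the chosen normalization) until it hits its maximum, then decreases. The key geometric observation is that on the slice $\{z = x\}$, the quantity $1 - z = 1 - x$ is constant, so $M(s, 2-x-s, x) = \min(1 - s, 1 - (2 - x - s), 1 - x) = \min(1 - s, x - 1 + s, 1 - x)$, and this min is a tent function of $s$ capped at $1 - x$.

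First I would parametrize the slice by $M$ itself rather than by $s$. The set $\{M = t\}$ inside $\Delta$ is a triangle homothetic to $\Delta$ with ratio $(1 - 3t)$ (at $t = 0$ we get $\partial\Delta$ scaled, at $t = \tfrac13$ the centre), so its intersection with $\{z = x\}$ is a short segment whose length, as a function of $t$, I would write down explicitly. When $x \le \tfrac23$: the plane $\{z = x\}$ meets the level triangle $\{M = t\}$ for all $t$ with $0 \le t \le \tfrac{x}{2}$ (the apex value $\tfrac{x}{2}$ being the largest value of $M$ attained on that slice, reached at the slice's midpoint $s = 1 - \tfrac{x}{2}$, where $1 - s = \tfrac x2$ and $x - 1 + s = \tfrac x2$), and for each such $t$ it meets $\{M = t\}$ in exactly two points (the two sides of the tent), contributing a constant Jacobian factor; integrating $p(t)$ against this gives the $2\int_0^{x/2} p(t)\,dt$ formula. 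When $x \ge \tfrac23$: now $1 - x < \tfrac{x}{2}$, so $M$ on the slice is capped at $1 - x$; for $0 \le t < 1 - x$ the slice still meets $\{M = t\}$ in two points, contributing $2\int_0^{1-x} p(t)\,dt$, but there is in addition a whole sub-segment of the slice on which $M \equiv 1 - x$ (the part of the slice lying in the ``core'' where the third coordinate's distance is the binding one), of length proportional to $(3x - 2)$, contributing the extra term $(3x - 2)\,p(1 - x)$.

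The main work, and the main obstacle, is bookkeeping the constants: one must fix the normalization $\lambda_\Delta(\Delta) = \tfrac12$, track the linear change of variables from the segment parameter $s$ to $M$ (including the factor coming from the angle between the slice and the direction in which $M$ grows), and verify that with the stated normalization all Jacobian constants collapse to exactly the coefficients $2$ and $(3x-2)$ written in the proposition — in particular that the ``$2$'' is genuinely $2$ and not $2$ times some artifact of the normalization. A clean way to pin these constants down without a delicate geometric computation is a consistency check: integrate the claimed $p_*$ over $[0,1]$ and confirm it equals $\mu(\Delta) = \int_\Delta p\,d\lambda_\Delta$ for the chosen normalization, and separately check the case $p \equiv 1$ (which must reproduce the projection of $\lambda_\Delta$ itself, a known explicit density on $[0,1]$ that is $2 \cdot \tfrac x2 = x$ for $x \le \tfrac23$ and $(3x-2) + 2(1-x) = x$... wait, that gives $x$, which is not a probability density on $[0,1]$, so the normalization forces the total mass $\tfrac12$; this is exactly the self-check that fixes every constant). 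So concretely: (1) set up the slice and the tent function $M(s)$; (2) change variables $s \mapsto t = M$, splitting $[x \le \tfrac23]$ versus $[x \ge \tfrac23]$ and isolating the flat ``core'' sub-segment in the latter case; (3) read off the two integrals and the boundary term, with constants pinned by the $p \equiv 1$ normalization check; (4) note symmetry gives the same answer for the $x$- and $y$-projections, completing the proof.
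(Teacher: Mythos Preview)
Your approach is correct and is essentially the same computation as the paper's. The paper projects $\Delta$ bijectively onto the $xy$-coordinate plane (where, by the stated normalization of $\lambda_\Delta$, the image measure is ordinary $2$-dimensional Lebesgue on the triangle $T$ with vertices $(0,1),(1,0),(1,1)$), and then obtains
\[
p_*(x)=\int_{1-x}^{1}p\bigl(\min(1-x,\,1-y,\,x+y-1)\bigr)\,dy,
\]
which it evaluates by exactly your tent/capped-tent case split. Your slice at constant third coordinate, parametrized by $s$, produces the identical integrand $\min(1-s,\,s+x-1,\,1-x)$ over $s\in[1-x,1]$; your change of variables $s\mapsto t=M$ is just another way of carrying out the same piecewise integration.

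The one place you spend unnecessary effort is the Jacobian bookkeeping. Because the paper's normalization is precisely that the coordinate-plane projection of $\lambda_\Delta$ is standard Lebesgue, there is no hidden angular or scaling factor: the integral over the slice is a plain $1$-dimensional Lebesgue integral in $s$, so the coefficients $2$ and $3x-2$ fall out immediately without needing the $p\equiv 1$ consistency check (though that check is of course valid and gives $p_*(x)=x$, total mass $\tfrac12$, as you noted).
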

\begin{proof}
Denote the projection of $\mu$ onto the hyperplane $xy$ by $\mu_{xy}$. It is concentrated on the triangle $T$ with vertices $(0, 1)$, $(1, 0)$ and $(1, 1)$. Its density with respect to the Lebesgue measure on the plane at the point $(x, y)$ lying inside $T$ is $$p( M(x, y, 2 - x - y)) = p(\min(1 - x, 1 - y, x + y - 1)).$$

Define $\mu_x$ as the projection of $\mu$ onto $x$, or, what is the same, the projection of the measure $\mu_{xy}$ onto $x$. Then the measure of $[0, x_0]$ on the one hand is $\int_{0}^{x_0} p_*(x) dx$, and on the other hand is equal to the measure of the part of the triangle $T$ where the $x$ coordinate belongs to $[0, x_0]$. Thus, we have established the equality $\int_{0}^{x_0} p_*(x) dx = \int_{0}^{x_0} \int_{1-x}^1 p(M(x, y, 2 - x - y)) dx dy$. Differentiating both sides of this equality with respect to $x_0$, we obtain $p_*(x) = \int_{1 - x}^1 p(\min(1 - x, 1 - y, x + y - 1)) dy$. 

Assume $x \le \frac{2}{3}$. Then:
\begin{align*}
\min(1 - x, 1 - y, x + y - 1) = \begin{cases}
x + y - 1, &\text{ for } y \in \left[1 - x, 1 - \frac{x}{2}\right],\\
1 - y, &\text{ for } y \in \left[1 - \frac{x}{2}, 1\right].
\end{cases}
\end{align*}
From here we get: 
\begin{align*}
p_*(x) &= \int_{1 - x}^1 p(\min(1 - x, 1 - y, x + y - 1)) dy \\ 
&= \int_{1 - {x \over 2}}^1 p(1 - y) dy + \int_{1 - x}^{1 - {x \over 2}} p(x + y - 1) dy = 2\int_0^{x \over 2}p(t)~dt.
\end{align*}

Analogously for $x \ge \frac{2}{3}$:
\begin{align*}
\min(1 - x, 1 - y, x + y - 1) = \begin{cases}
x + y - 1, &\text{ for } y \in \left[1 - x, 2 - 2x\right],\\
1 - x, &\text{ for } y \in \left[2 - 2x, x\right],\\
1 - y, &\text{ for } y \in \left[x, 1\right].	
\end{cases}
\end{align*}
After this we calculate $p_*(x)$:
\begin{align*}
p_*(x) &= \int_{1 - x}^1 p(\min(1 - x, 1 - y, x + y - 1)) dy \\ 
&= \int_{1 - x}^{2 - 2x}p(x + y - 1)dy + \int_{2 - 2x}^{x}p(1 - x)dy + \int_{x}^{1}p(1 - y)dy \\ 
&= 2\int_0^{1 - x}p(t)dt + (3x - 2)p(1-x).
\end{align*} 
\end{proof}

Next we define \textit{median measure}.

\begin{definition}
\textit{The median subset} of $\Delta$ is the set $$\{(x, y, z) \in \Delta \mid x = y \ge z\} \cup \{(x, y, z) \in \Delta \mid y = z \ge x\} \cup \{(x, y, z) \in \Delta \mid x = z \ge y\}.$$ From a geometric point of view, this is a union of three segments in $\Delta$ from vertices to the center of the triangle $\Delta$.
\end{definition}
Projections of any segment from the median set are $\left[0, \frac{2}{3}\right]$ and $\left[\frac{2}{3}, 1\right]$.
On these segments one can define a measure proportional to the Lebesgue measure such that the measure of each segment is $\frac{2}{3}$. In what follows, we shall consider all the densities on the median set with respect to this measure.

\begin{definition}
\textit{Median measure} $\mu$, generated by a density function $q: [0, {2 \over 3}] \to \mathbb{R}_+$, is a measure with density on the median set that its density on each of the segments is equal to $q(t)$ at the points $(t, t, 2-2t)$, $(t, 2-2t, t)$, $(2-2t, t, t)$ with respect to the 
reference measure described above.
\end{definition}

It is easy to verify the following assertion:

\begin{proposition} \label{prop:median_projection}
Let $\mu$ be a median measure generated by $q$. Let $q_*(x)$ be the density of the projection of this measure onto an arbitrary axis. Then

\begin{align*}
q_*(x) = \begin{cases}
q(x), &\text{ for } x < {2 \over 3},\\
4q\left(2 - 2x\right), &\text{ for } x > {2 \over 3}.
\end{cases}
\end{align*}
\end{proposition}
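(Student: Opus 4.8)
The argument is a direct push-forward computation, entirely parallel to the proof of \cref{prop:layered_projection}. By the symmetry of the median measure it suffices to describe the projection onto the first axis, and I would begin by fixing the elementary geometry of the three segments making up the median subset: realise them as the segments joining the centre $\left(\frac23,\frac23,\frac23\right)$ of $\Delta$ to the three vertices $(0,1,1)$, $(1,0,1)$, $(1,1,0)$. Along the first of these segments the generic point is $\left(x,\,1-\frac x2,\,1-\frac x2\right)$ with $x\in\left[0,\frac23\right]$, while along the other two it is $\left(x,\,2-2x,\,x\right)$, respectively $\left(x,\,x,\,2-2x\right)$, with $x\in\left[\frac23,1\right]$. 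In particular the first coordinate sweeps $\left[0,\frac23\right]$ on exactly one segment and $\left[\frac23,1\right]$ on the remaining two, it is an affine function of arc length on each segment, and the minimal coordinate of the generic point — which is the argument at which the generating function $q$ is evaluated in the definition of the median measure — equals $x$ on the first segment and $2-2x$ on each of the other two.

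Next I would push the reference measure of each segment forward to the first coordinate. Since this reference measure is uniform with respect to arc length and has total mass $\frac23$, and the first coordinate is affine along the segment, its image is the uniform measure of mass $\frac23$ on the image interval: this is Lebesgue measure $dx$ on $\left[0,\frac23\right]$ for the first segment and $2\,dx$ on $\left[\frac23,1\right]$ for each of the other two. Multiplying by the density $q(\,\cdot\,)$ of the median measure and adding up the at most two nonzero contributions at each point, I obtain that the projection of $\mu$ onto the first axis has density $q(x)$ on $\left[0,\frac23\right]$, where only the first segment contributes, and $2\,q(2-2x)+2\,q(2-2x)=4\,q(2-2x)$ on $\left[\frac23,1\right]$, where the two remaining segments each contribute; this is exactly the stated formula. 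Equivalently, one may integrate first and differentiate: $\int_0^{x_0} q_*(x)\,dx$ equals $\mu\!\left(\{x\le x_0\}\right)$, which is computed segment by segment, and differentiation in $x_0$ yields the same answer.

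There is no genuine difficulty here; the only points that need care are the two numerical factors that combine to give the coefficient $4$ for $x>\frac23$ — one factor $2$ because two of the three segments project onto $\left[\frac23,1\right]$, and one factor $2$ coming from the Jacobian of the affine projection, since a segment carrying reference mass $\frac23$ is mapped onto an interval of length $\frac13$ — together with the verification that on those two segments the argument of $q$ is the minimal coordinate $2-2x$ and not the first coordinate $x$. As a consistency check one computes $\int_0^{2/3} q(x)\,dx + \int_{2/3}^{1} 4\,q(2-2x)\,dx = 3\int_0^{2/3} q(t)\,dt$, which is the total mass of the median measure, as it must be.
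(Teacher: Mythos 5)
Your proof is correct and complete. The paper does not actually prove this proposition --- it is introduced with ``It is easy to verify the following assertion'' and the argument is left to the reader --- so your write-up is not a variation on the paper's proof but a genuine fill-in, and it is essentially the only sensible one: parametrize each of the three segments by its first coordinate, observe that exactly one of them projects onto $\left[0,\tfrac23\right]$ and the other two onto $\left[\tfrac23,1\right]$, push the reference measure forward (giving Lebesgue density $1$ on the short image interval and $2$ on each of the long ones because each segment carries mass $\tfrac23$), and multiply by $q$ of the minimal coordinate. The two factors of $2$ you isolate --- one from the Jacobian, one from the multiplicity of segments --- are exactly the two things that need care, and your total-mass consistency check $\int_0^{2/3}q+\int_{2/3}^1 4q(2-2x)\,dx=3\int_0^{2/3}q$ is a useful sanity check.

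One point worth flagging explicitly, since you handle it only implicitly: the paper's Definition of the median measure lists the points $(t,t,2-2t)$, $(t,2-2t,t)$, $(2-2t,t,t)$ with $t\in\left[0,\tfrac23\right]$, but for $t\in\left[0,\tfrac12\right)$ these are not even in $\Delta$, and for $t\in\left[\tfrac12,\tfrac23\right]$ they sit on the complementary set $\{x=y\le z\}\cup\cdots$, not on the median set as defined. The intended reading --- and the only one compatible with both the domain of $q$ and the statement of the proposition --- is the one you use: the density at the median-set point whose minimal coordinate is $t$ equals $q(t)$, i.e.\ the representative points should read $\left(1-\tfrac t2,1-\tfrac t2,t\right)$, $\left(1-\tfrac t2,t,1-\tfrac t2\right)$, $\left(t,1-\tfrac t2,1-\tfrac t2\right)$. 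You correctly reverse-engineered this from context; it would be worth stating the corrected parametrization rather than relying on the reader to make the same repair.
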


This implies, in particular, the following identity

\begin{equation}\label{eq:median_measure_projection}
4q_*(2x) = q_*(1 - x), \ x < \frac{1}{3}.
\end{equation}

The converse is also true: if nonnegative $q_*$ satisfies \cref{eq:median_measure_projection}, then there is a median measure which projection onto arbitrary axis coincides with $q_*$.

\subsection{Combining measures}

Let $\pi$ be a measure on the segment $[0, 1]$ with density $f$. We are concerned with $f(x)=\alpha^x$, but we will only use the fact that $f(x)$ is continuously differentiable, increasing, convex and satisfies $\int_0^1 \left(t - \frac{2}{3}\right)f(t) dt = 0$. The last means that measure with density $f$ satisfies \cref{main_traingle_equality}. 

We want to find a measure $\mu$ that is the sum of the layered measure $\mu_p$ generated by a function $p$ and the median measure $\mu_q$ generated by a function $q$, whose projection on each of the axes coincides with $\pi$.

We subtract $\mu_p$ from $\mu$ and look at the projection of $\mu - \mu_p$ on the axes with the density $q_*(x)$. By \cref{prop:layered_projection}, the projection is equal to
\begin{align*}q_*(x)=
\begin{cases}
f(x) - 2\int_0^\frac{x}{2}p(t)dt, &\text{ for } {x \le \frac{2}{3}},\\
f(x) - (3x - 2)p(1 - x) - 2\int_0^{1 - x}p(t)dt, &\text{ for } x \ge \frac{2}{3}.
\end{cases}
\end{align*}

In order for $q_*(x)$ to be a density of the projection of a median measure, it suffices that $q_*(x) \ge 0$ and $4q_*(2x) = q_*(1 - x)$ for $x \le \frac{1}{3}$. Using the identities on $q_*(x)$ given above, we obtain the equivalent equation:

\begin{equation} \label{eq:diffur}
4\left(f(2x) - 2 \int_0^x p(t)dt\right) = f(1 - x) - (1 - 3x)p(x) - 2\int_0^xp(t)dt
\end{equation}

Assuming $P(x) = \int_0^x p(t)dt$, we get the following equation
$$
4\left(f(2x) - 2P(x)\right) = f(1 - x) - (1 - 3x)P'(x) - 2P(x) 
$$

This is a differential equation of the first degree, its solutions have the form

$$
P(x) = \frac{c_1 + \int_0^x(1 - 3t)(f(1-t) - 4f(2t))dt}{(1 - 3x)^2}.
$$

Using $P(0) = 0$ we get $c_1 = 0$, 

$$
P(x) = \frac{1}{(1 - 3x)^2} \int_0^x(1 - 3t)(f(1-t) - 4f(2t))dt.
$$

Now suppose that $f$ is continuously differentiable. We find $p(x)$ using integration by parts:
\begin{align*}
p(x) &= P'(x) \\ 
&= \frac{(f(1 - x)-4f(2x))(1 - 3x)^2 + 6\int_0^x(1 - 3t)(f(1-t) - 4f(2t))dt}{(1 - 3x)^3} \\ 
&= \frac{1}{(1 - 3x)^3}\left(f(1) - 4f(0) - \int_0^x(1 - 3t)^2(f'(1-t) + 8f'(2t))dt\right).
\end{align*}

To prove that $p(x)$ and corresponding $q(x)$ generate a nonnegative density, we need to check that $p(x) \ge 0$ for $x \in \left[0, \frac{1}{3}\right]$, $p\left(\frac{1}{3}\right)$ is well-defined and $f(2x)-2P(x) = q_*(2x) = q(2x) \ge 0$ for $x \in \left[0, \frac{1}{3}\right]$, where $q(x)$ generates the median measure.

\begin{lemma}\label{lem:1_3_well_defined}
Suppose that $f : [0, 1] \to \mathbb{R}$ is a continuously differentiable monotonically increasing function and $\int_0^1 \left(t - \frac{2}{3}\right)f(t) dt = 0$. Then the function
$$
I(x) = f(1) - 4f(0) - \int_0^x(1 - 3t)^2(f'(1-t) + 8f'(2t))dt
$$
is nonnegative on $\left[0, {1 \over 3}\right]$ and $I\left(1 \over 3\right) = 0$.
\end{lemma}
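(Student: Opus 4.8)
\section*{Proof proposal for \texorpdfstring{\cref{lem:1_3_well_defined}}{the lemma}}

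The plan is to deduce nonnegativity from monotonicity of $I$, so that the whole statement reduces to the single endpoint identity $I\left(\tfrac13\right) = 0$. First I would note that $I$ is differentiable on $\left[0, \tfrac13\right]$ with
$$
I'(x) = -(1 - 3x)^2\bigl(f'(1 - x) + 8 f'(2x)\bigr).
$$
Since $f$ is monotonically increasing and differentiable, $f' \ge 0$, and $(1-3x)^2 \ge 0$, so $I'(x) \le 0$ on $\left[0, \tfrac13\right]$. Hence $I$ is non-increasing there, and once we know $I\left(\tfrac13\right) = 0$ we immediately get $I(x) \ge I\left(\tfrac13\right) = 0$ for all $x \in \left[0, \tfrac13\right]$.

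It remains to compute $I\left(\tfrac13\right)$. The idea is to show that the integral $\int_0^{1/3}(1-3t)^2\bigl(f'(1-t) + 8f'(2t)\bigr)\,dt$ reassembles into $\int_0^1 (3u-2)^2 f'(u)\,du$. Concretely, in the term with $f'(1-t)$ I would substitute $u = 1 - t$, which turns it into $\int_{2/3}^1 (3u-2)^2 f'(u)\,du$; in the term with $8f'(2t)$ I would substitute $u = 2t$, and the factor $8$ together with the Jacobian $\tfrac12$ and the identity $(1-3t)^2 = (2-3u)^2/4$ turns it into $\int_0^{2/3}(3u-2)^2 f'(u)\,du$. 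Adding the two pieces gives $\int_0^1(3u-2)^2 f'(u)\,du$ over the full interval. Then integration by parts (using $f\in C^1$) yields
$$
\int_0^1(3u-2)^2 f'(u)\,du = f(1) - 4f(0) - 6\int_0^1(3u-2)f(u)\,du,
$$
and since $3u - 2 = 3\left(u - \tfrac23\right)$, the last integral equals $3\int_0^1\left(u - \tfrac23\right)f(u)\,du = 0$ by hypothesis. Therefore $I\left(\tfrac13\right) = f(1) - 4f(0) - \bigl(f(1) - 4f(0)\bigr) = 0$, completing the argument.

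There is no serious obstacle here: the computation is routine. The only point that requires care is the bookkeeping of the two changes of variable --- keeping track of the reversed limits and the sign of $du = -dt$ in the first substitution, and the normalization constants in the second --- since this is exactly what makes the two contributions glue together into the clean integral over $[0,1]$ rather than staying as separate integrals over $[0,\tfrac23]$ and $[\tfrac23,1]$. Continuous differentiability of $f$ is used both to integrate by parts and to ensure $I$ is well defined and continuous (in particular at $x = \tfrac13$, where the integrand stays bounded).
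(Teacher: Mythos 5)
Your proposal is correct and follows essentially the same route as the paper: monotonicity of $I$ (from $f' \ge 0$) reduces the claim to the endpoint identity $I\!\left(\tfrac{1}{3}\right) = 0$, which both you and the paper verify via a change of variables and an integration by parts that expose the hypothesis $\int_0^1\!\left(t-\tfrac{2}{3}\right)f(t)\,dt = 0$. The only (cosmetic) difference is the order of operations — you substitute first to reassemble the integral as $\int_0^1(3u-2)^2 f'(u)\,du$ and then integrate by parts once, whereas the paper integrates by parts first and then changes variables.
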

\begin{proof}
Since $f$ is increasing, $f' \ge 0$ and the integrand $(1 - 3t)^2(f'(1-t) + 8f'(2t))$ is nonnegative. So the integral increases and $I(x)$ monotonically decreases to $I\left(\frac{1}{3}\right)$. Integrating by parts we get

\begin{align*}
I\left(1 \over 3\right) &= f(1) - 4f(0) - \int_0^\frac{1}{3}(1 - 3t)^2(f'(1-t) + 8f'(2t)) dt \\
&= \int_0^\frac{1}{3} (4f(2t) - f(1-t)) d(1-3t)^2\\
&= 6\int_0^\frac{1}{3} (1 - 3t)(f(1 - t) - 4f(2t)) dt\\
&= 18\int_0^1 \left(t - \frac{2}{3}\right)f(t) dt \\
&= 0.
\end{align*}
\end{proof}

Using this lemma one can check that $p(x)$ is nonnegative and well-defined.
\begin{proposition}\label{prop:pcorrect}
Suppose that $f(x)$ satisfies the conditions of \cref{lem:1_3_well_defined}. Then the function
$$
p(x) = \frac{1}{(1 - 3x)^3}\left(f(1) - 4f(0) - \int_0^x(1 - 3t)^2(f'(1-t) + 8f'(2t))dt\right)
$$
is nonnegative and $\lim_{x \to {1 \over 3}}p(x) = f'\left(2 \over 3\right)$.
\end{proposition}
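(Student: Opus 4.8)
The plan is to recognise that $p(x)$ is precisely $\frac{I(x)}{(1-3x)^3}$, where $I$ is the function studied in \cref{lem:1_3_well_defined}, and then to treat the two features of the statement — nonnegativity and the value of the limit at $x=\tfrac13$ — separately.

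For nonnegativity on $[0,\tfrac13)$ nothing beyond \cref{lem:1_3_well_defined} is needed: there the denominator $(1-3x)^3$ is strictly positive, while the lemma asserts $I(x)\ge 0$ (in fact $I$ decreases monotonically from $f(1)-4f(0)$ down to $I(\tfrac13)=0$). Hence $p(x)\ge 0$ for all $x\in[0,\tfrac13)$.

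The only point that needs care is $x=\tfrac13$, where numerator and denominator both vanish, so $p(\tfrac13)$ is an indeterminate form $\tfrac00$. I would apply L'H\^opital's rule. The numerator $I$ is $C^1$ with $I'(x)=-(1-3x)^2\bigl(f'(1-x)+8f'(2x)\bigr)$, and $\tfrac{d}{dx}(1-3x)^3=-9(1-3x)^2$, so the factor $(1-3x)^2$ cancels and
$$
\lim_{x\to 1/3}p(x)=\lim_{x\to 1/3}\frac{f'(1-x)+8f'(2x)}{9}=\frac{f'(2/3)+8f'(2/3)}{9}=f'\!\left(\tfrac23\right),
$$
the last step using continuity of $f'$. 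Since $f$ is increasing we have $f'(\tfrac23)\ge 0$, so setting $p(\tfrac13):=f'(\tfrac23)$ makes $p$ a continuous nonnegative function on all of $[0,\tfrac13]$, which is exactly the assertion.

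I do not expect a genuine obstacle: the proposition is essentially a corollary of \cref{lem:1_3_well_defined} together with this endpoint computation. The mildly delicate steps are checking the hypotheses of L'H\^opital (a $C^1$ numerator and a denominator whose derivative is nonzero on a punctured neighbourhood of $\tfrac13$, both of which hold) and noticing that the cancellation of $(1-3x)^2$ is what keeps the limit finite and identifies it as $f'(\tfrac23)$; equivalently one could expand $I$ to first order at $\tfrac13$ in the variable $s=1-3x$.
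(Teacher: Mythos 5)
Your proof is correct and takes essentially the same route as the paper: you rewrite $p(x)=I(x)/(1-3x)^3$, invoke \cref{lem:1_3_well_defined} for the sign, and apply L'H\^opital once at $x=\tfrac13$, where cancellation of $(1-3x)^2$ yields $f'(\tfrac23)$.
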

\begin{proof}
Using the function $I(x)$ from \cref{lem:1_3_well_defined} we can rewrite the function $p(x)$ as follows:
$$
p(x) = \frac{I(x)}{(1 - 3x)^3}.
$$
$I(x)$ is nonnegative, so is $p(x)$. Let us check that $p\left(1 \over 3\right)$ is well-defined.

Since $I\left(1 \over 3\right) = 0$ one can apply the L'Hospital rule to $p(x)$:
\begin{align*}
    \lim_{x \to \frac{1}{3}}p(x) &= \lim_{x \to \frac{1}{3}}\frac{I(x)}{(1 - 3x)^3} =  \lim_{x \to \frac{1}{3}}-\frac{I'(x)}{9(1 - 3x)^2}\\
    &=\lim_{x \to \frac{1}{3}}\frac{(1 - 3x)^2(f'(1-x) + 8f'(2x))}{9(1 - 3x)^2}\\
    &=\frac{1}{9}\lim_{x \to \frac{1}{3}}(f'(1-x) + 8f'(2x)) = f'\left({2 \over 3}\right).
\end{align*}
\end{proof}

Now we will check that the function $q$ is nonnegative as well, so it generates the measure with nonnegative density.
\begin{proposition}\label{prop:qcorrect}
Suppose that $f(x)$ satisfies the conditions of \cref{lem:1_3_well_defined} and $f(x)$ is convex on $[0, 1]$. Then the function
$$
q(2x) = f(2x) - 2P(x) = f(2x) - \frac{2}{(1 - 3x)^2} \int_0^x(1 - 3t)(f(1-t) - 4f(2t))dt
$$
is nonnegative.
\end{proposition}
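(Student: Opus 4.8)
\section*{Proof proposal}

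The plan is to clear the denominator and reduce the claim to a monotonicity statement. Introduce the auxiliary function
$$g(x) = (1-3x)^2\, q(2x) = (1-3x)^2 f(2x) - 2\int_0^x (1-3t)\bigl(f(1-t) - 4f(2t)\bigr)\,dt ,$$
which is of class $C^1$ on $[0,1/3]$ since $f \in C^1$. On $[0,1/3)$ we have $q(2x) = g(x)/(1-3x)^2$, so it suffices to prove $g(x) \ge 0$ on $[0,1/3]$; the value of $q(2x)$ at the endpoint $x=1/3$ is then handled by a limit (L'Hôpital's rule, exactly as in the discussion of \cref{prop:pcorrect}), and turns out to be $0$.

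The argument rests on two ingredients. First, the boundary value: since the quadratic factor vanishes at $x=1/3$,
$$g\!\left(\tfrac13\right) = -2\int_0^{1/3}(1-3t)\bigl(f(1-t) - 4f(2t)\bigr)\,dt,$$
and the computation inside the proof of \cref{lem:1_3_well_defined} shows precisely that $I(1/3) = 6\int_0^{1/3}(1-3t)(f(1-t)-4f(2t))\,dt = 0$ — this is where the normalization $\int_0^1(t-\tfrac23)f(t)\,dt = 0$ is used. Hence $g(1/3) = 0$. Second, differentiating $g$ (product rule plus the fundamental theorem of calculus) and collecting terms gives
$$g'(x) = 2(1-3x)\Bigl(f(2x) - f(1-x) + (1-3x)f'(2x)\Bigr).$$
Here convexity of $f$ enters exactly once: for $x\in[0,1/3]$ one has $2x \le 1-x$, so the tangent‑line inequality at the point $2x$ yields $f(1-x) \ge f(2x) + f'(2x)\bigl((1-x)-2x\bigr) = f(2x) + (1-3x)f'(2x)$, i.e. the parenthesized factor in $g'(x)$ is $\le 0$. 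Since $1-3x \ge 0$ on $[0,1/3]$, we get $g'(x) \le 0$ there.

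Combining the two: $g$ is non‑increasing on $[0,1/3]$ and $g(1/3)=0$, hence $g(x) \ge 0$ on $[0,1/3]$, which gives $q(2x)\ge 0$ for $x\in[0,1/3)$; the limiting value at $x=1/3$ is nonnegative (indeed $0$) by the L'Hôpital computation. I do not expect a real obstacle here: the only delicate points are the sign bookkeeping in the formula for $g'$ and recognizing that the vanishing of $g$ at $x=1/3$ is exactly the content of \cref{lem:1_3_well_defined}. (Note that monotonicity of $f$ is not needed for this proposition beyond its use inside \cref{lem:1_3_well_defined}; only convexity is invoked directly, and only through the tangent‑line inequality.)
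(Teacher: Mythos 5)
Your proof is correct and follows essentially the same strategy as the paper: clear the $(1-3x)^2$ denominator, show the resulting function vanishes at $x=1/3$ via the normalization $\int_0^1(t-\tfrac23)f(t)\,dt=0$, and show it is non-increasing by applying the tangent-line form of convexity at $2x$. Your $g(x)$ is exactly one third of the paper's numerator $n(x) = (1-3x)^2(f(1-x)-f(2x)) - I(x)$; the only cosmetic difference is that the paper performs an integration by parts to rewrite the integral in terms of $I(x)$ before differentiating, whereas you differentiate the original integral form directly — both arrive at the same derivative identity and the same convexity inequality $f'(2x) \le \frac{f(1-x)-f(2x)}{1-3x}$.
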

\begin{proof}
Write the function $q$ in the following form:
\begin{align*}
    q(2x) &= f(2x) - \frac{2}{(1 - 3x)^2} \int_0^x(1 - 3t)(f(1-t) - 4f(2t))dt \\
    &= f(2x) + \frac{1}{3(1 - 3x)^2}\int_0^x(f(1-t) - 4f(2t))d(1 - 3t)^2\\
    &= f(2x) + \frac{(1 - 3t)^2(f(1 - t) - 4f(2t))|_0^x - \int_0^x(1 - 3t)^2(f'(1 - t) + 8f'(2t))dt}{3(1 - 3x)^2}\\
    &= \frac{(1 - 3x)^2(f(1 - x) - f(2x)) - I(x)}{3(1 - 3x)^2}.
\end{align*}

To check that $q \ge 0$ it suffices to check that the numerator $n(x) = (1 - 3x)^2(f(1 - x) - f(2x)) - I(x)$ is nonnegative. From \cref{lem:1_3_well_defined} $n\left(\frac{1}{3}\right) = 0$. So we check that $n$ is decreasing.

\begin{align*}
n'(x) &= 6(1 - 3x)(f'(2x)(1 - 3x) - (f(1 - x) - f(2x))),\\
n'(x) &\le 0 \Leftrightarrow f'(2x) \le \frac{f(1 - x) - f(2x)}{1 - 3x}.
\end{align*}

The last equality holds since $f$ is convex.
\end{proof}

Summarizing the last two propositions we obtain the following theorem:

\begin{theorem}
For any continuously differentiable, increasing and convex function $f: [0, 1]$ satisfying $\int_0^1 \left(t - \frac{2}{3}\right)f(t) dt = 0$, there exists a measure on $\Delta$ with projections onto the axes have densities $f(x)$.
\end{theorem}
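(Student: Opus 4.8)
The plan is to realize the desired measure as a sum $\mu = \mu_p + \mu_q$ of a layered measure and a median measure, using exactly the functions $p$ and $q$ constructed in the preceding pages. All the analytic estimates have already been carried out in \cref{lem:1_3_well_defined}, \cref{prop:pcorrect} and \cref{prop:qcorrect}; the proof of the theorem is therefore the assembly step, in which one checks that the two pieces are legitimate nonnegative measures and that their projections add up to the prescribed density $f$.

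Concretely, I would first let $\mu_p$ be the layered measure generated by
$$
p(x) = \frac{1}{(1 - 3x)^3}\left(f(1) - 4f(0) - \int_0^x(1 - 3t)^2(f'(1-t) + 8f'(2t))\,dt\right).
$$
By \cref{prop:pcorrect} (which uses that $f$ is increasing, via \cref{lem:1_3_well_defined}) the function $p$ is nonnegative on $[0,\tfrac13]$ and extends continuously at $x=\tfrac13$ to the value $f'(\tfrac23)$, so $\mu_p$ is a genuine finite nonnegative measure on $\Delta$. Using \cref{prop:layered_projection} I would write down the density of the projection of $\mu_p$ onto an axis and subtract it from $f$, obtaining the function $q_*$ displayed in the ``Combining measures'' computation. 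Then I would verify the two conditions that make $q_*$ the projection density of a median measure: nonnegativity of $q_*$ is precisely \cref{prop:qcorrect} (this is where convexity of $f$ enters, and nonnegativity on $[\tfrac23,1]$ then follows from the identity below), and the compatibility relation $4q_*(2x)=q_*(1-x)$ for $x\le\tfrac13$ is exactly \cref{eq:diffur}, which holds because $P(x)=\int_0^x p$ was defined as the solution of that first-order linear ODE with $P(0)=0$. By the converse to \cref{prop:median_projection} recorded after \cref{eq:median_measure_projection}, there is a median measure $\mu_q$ whose projection onto every axis has density $q_*$. Setting $\mu = \mu_p + \mu_q$, the projection onto each axis has density $(f-q_*)+q_* = f$, which finishes the proof.

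Since the substantive inequalities have all been established beforehand, there is no single hard obstacle left; the points requiring care are bookkeeping points. One must track the behavior at the corner $x=\tfrac13$, where $p$ is only defined as a limit, and at $x=\tfrac23$, where the two branches of the projection formulas in \cref{prop:layered_projection} and \cref{prop:median_projection} must agree. One must also note that the normalization hypothesis $\int_0^1\left(t-\tfrac23\right)f(t)\,dt=0$ (equivalently \cref{main_traingle_equality}) is what makes everything finite: it forces $I(\tfrac13)=0$ in \cref{lem:1_3_well_defined}, so that the factor $(1-3x)^{-3}$ in $p$ does not produce a blow-up. All of these have been dealt with in the cited lemmas, so the remaining argument is essentially the combinatorial check above.
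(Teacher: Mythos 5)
Your proposal follows the paper's own construction exactly: take $\mu_p$ to be the layered measure generated by the $p$ obtained from \cref{eq:diffur}, invoke \cref{lem:1_3_well_defined} and \cref{prop:pcorrect} for nonnegativity and regularity of $p$, invoke \cref{prop:qcorrect} together with the functional identity $4q_*(2x)=q_*(1-x)$ and the converse of \cref{prop:median_projection} to produce the median measure $\mu_q$, and set $\mu=\mu_p+\mu_q$. This is precisely how the paper assembles these ingredients in the ``Combining measures'' subsection before stating the theorem, so your argument is correct and coincides with the intended proof.
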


All the assumptions can be applied to $f(x) = \alpha^x$, where $\alpha$ is a solution of \cref{alpha_equation}.

Also we find $p(x)$ and $q(x)$ explicitly.

\begin{align*}
p(x) &= -\frac{1}{(1 - 3x)^3}\int_0^x(1 - 3t)^2(f'(1-t) + 8f'(2t))dt \\
&=\frac{\alpha^{1-x}-4 \alpha^{2 x}}{1-3 x}-6\frac{2 \alpha^{2 x}+\alpha^{1-x}}{(1-3x)^2 \ln\alpha}-6\frac{3 \alpha^{2 x}+3\alpha-\alpha
\ln\alpha-3-2\ln\alpha-3 \alpha^{1-x}}{(1-3 x)^3 \ln ^2\alpha}\\
&= \frac{\alpha^{1-x}-4 \alpha^{2 x}}{1-3 x}-6\frac{2 \alpha^{2 x}+\alpha^{1-x}}{(1-3x)^2 \ln\alpha}-18\frac{\alpha^{2 x}-\alpha^{1-x}}{(1-3 x)^3 \ln ^2\alpha},
\end{align*}
\begin{align*}
q(2x) &= f(2x) - 2P(x) = f(2x) - {2 \over (1 - 3x)^2}\int_0^x(1 - 3t)(f(1-t) - 4f(2t))dt \\
&=\alpha^{2 x}+2\frac{2 \alpha^{2 x}+\alpha^{1-x}}{(1-3 x) \ln
\alpha}+2\frac{3 \alpha^{2 x}+3\alpha-\alpha \ln\alpha-3
-2 \ln\alpha-3\alpha^{1-x}}{(1-3 x)^2 \ln ^2\alpha} \\
&= \alpha^{2 x}+2\frac{2 \alpha^{2 x}+\alpha^{1-x}}{(1-3 x) \ln
\alpha}+6\frac{\alpha^{2 x}-\alpha^{1-x}}{(1-3 x)^2 \ln ^2\alpha}.
\end{align*}

The last identities follow from \cref{alpha_equation}.

Now we are ready to present the main theorem of this section:

\begin{theorem}
\label{thm:osexists}
There exists a $(3, 1)$-stochastic measure concentrated on the set $M$. 
\end{theorem}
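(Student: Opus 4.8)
The plan is to assemble $\pi$ from two pieces: a one-dimensional piece supported on $M_x\cup M_y\cup M_z$ and a two-dimensional piece supported on $M_2$, and then to check that the sum of their marginals is exactly the Lebesgue measure on $[0,1]$ along each of the three axes. Everything needed for the two-dimensional piece has been prepared in this section; the one-dimensional piece is an elementary arclength computation.

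First I would place on each of the segments $M_x$, $M_y$, $M_z$ the uniform (arclength) measure of density $\tfrac{l}{L}$, where $L=\sqrt{l^2+2(1-r)^2}=3l$ is their common length (recall $r=1-2l$). Computing the Jacobians of the coordinate projections, one checks that $M_x$ projects onto the $x$-axis as the Lebesgue measure on $[0,l]$, while each of $M_y$ and $M_z$ projects onto the $x$-axis as $\tfrac12$ of the Lebesgue measure on $[r,1]$, and by symmetry the same holds along the $y$- and $z$-axes. Hence the one-dimensional piece accounts precisely for the Lebesgue measure on $[0,l]\cup[r,1]$ on every axis, and it remains to construct a nonnegative measure supported on $M_2$ whose projection onto each axis is the Lebesgue measure on $[l,r]$.

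Next I would pass to the triangle $\Delta=\{u+v+w=2,\ 0\le u,v,w\le1\}$ through the coordinatewise logarithmic substitution $u=\frac{\ln x-\ln l}{\ln r-\ln l}$ (and cyclically in $y,z$), which carries $\{xyz=lr^2,\ l\le x,y,z\le r\}$ onto $\Delta$ and satisfies $dx=l\ln(\alpha)\,\alpha^u\,du$ with $\alpha=r/l$. Since this substitution acts on each coordinate separately, a measure on $M_2$ with Lebesgue marginals on $[l,r]$ corresponds, after rescaling by $l\ln\alpha$, to a measure on $\Delta$ whose marginal on each axis has density $f(u)=\alpha^u$. Because $\alpha$ is a root of \cref{alpha_equation}, the function $f(u)=\alpha^u$ is continuously differentiable, increasing and convex on $[0,1]$ and satisfies $\int_0^1\bigl(t-\tfrac23\bigr)\alpha^t\,dt=0$, i.e. \cref{main_traingle_equality}; therefore the existence theorem proved just above in this section applies and produces a measure on $\Delta$ with these marginals, namely the sum of the layered measure generated by $p$ and the median measure generated by $q$, which is nonnegative by \cref{prop:pcorrect} and \cref{prop:qcorrect}. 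Pulling this measure back through the substitution (its support is then exactly $M_2$) and multiplying by $l\ln\alpha$ gives the two-dimensional piece.

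Adding the two pieces yields a nonnegative measure $\pi$ with $\supp\pi\subset M$ whose marginal along each axis is the Lebesgue measure on $[0,l]\cup[l,r]\cup[r,1]=[0,1]$; in particular $\pi$ is a probability measure, and hence it is the required $(3,1)$-stochastic measure. I expect the only genuinely nontrivial ingredient to be the construction of the measure on $\Delta$ — solving the differential equation \cref{eq:diffur} for the layered profile $P$ and verifying that both $p$ and $q$ are nonnegative and well defined at $\tfrac13$ — which is precisely what the preceding subsection supplies; the remaining verifications are Jacobian bookkeeping.
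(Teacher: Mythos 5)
Your proposal is correct and follows essentially the same route as the paper: place uniform arclength measure of density $\tfrac{l}{L}$ on the three segments (whose projections give Lebesgue measure on $[0,l]\cup[r,1]$ along each axis), then transfer the remaining two-dimensional piece to the triangle $\Delta$ via the logarithmic substitution and invoke the layered-plus-median construction together with \cref{prop:pcorrect} and \cref{prop:qcorrect}. The extra observation that $L=3l$ is a harmless simplification, but otherwise the decomposition, the change of coordinates, and the appeal to the triangle existence theorem match the paper's proof step for step.
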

\begin{proof} 
Let us collect all the details of the proof together and describe our measure explicitly. Set $M$ contains segments connecting points $(0, 1, 1)$ and $(l, r, r)$, $(1, 0, 1)$ and $(r, l, r)$, $(1, 1, 0)$ and $(r, r, l)$. This segments have length $L = \sqrt{l^2 + 2(1 - r)^2}$. 
Define measure $\mu_{lin}$ as a sum of Lebesgue measures on this segments divided by$\frac{l}{L}$. 

The projections of two segments coincide with $[r, 1]$, the densities are equal to $\frac{L}{1 - r} \cdot \frac{l}{L} = \frac{1}{2}$. Their sum is the Lebesgue measure on $[r, 1]$.
The projection of the third interval is a measure on $[0, l]$, its density equals $\frac{L}{l} \cdot \frac{l}{L} = 1$.

The mapping $$u = {\ln x - \ln l \over \ln r - \ln l}, \ v = {\ln y - \ln l \over \ln r - \ln l}, \ w = {\ln z - \ln l \over \ln r - \ln l}$$ 
transforms the two-dimensional part of $M$ into triangle $\Delta$. 
We equip $\Delta$ with the layered measure $\mu_p$ generated by $$p(x) = \frac{\alpha^{1-x}-4 \alpha^{2 x}}{1-3 x}-6\frac{2 \alpha^{2 x}+\alpha^{1-x}}{(1-3x)^2 \ln\alpha}-18\frac{\alpha^{2 x}-\alpha^{1-x}}{(1-3 x)^3 \ln ^2\alpha},$$ and the median measure $\mu_q$ generated by $$q(2x) = \alpha^{2 x}+2\frac{2 \alpha^{2 x}+\alpha^{1-x}}{(1-3 x) \ln\alpha}+6\frac{\alpha^{2 x}-\alpha^{1-x}}{(1-3 x)^2 \ln^2\alpha}.$$

Then by \cref{prop:layered_projection} the projection of $\mu_p$ coincides with
\begin{align*}
\begin{cases}
2\int_0^\frac{x}{2}p(t)dt, &\text{ for } {x \le \frac{2}{3}},\\
(3x - 2)p(1 - x) + 2\int_0^{1 - x}p(t)dt, &\text{ for } x \ge \frac{2}{3}.
\end{cases}
\end{align*}

Since $p$ is a solution of \cref{eq:diffur} for $f(x) = \alpha^x$ we can conclude that for
\begin{align*}q_*(x) =
\begin{cases}
f(x) - 2\int_0^\frac{x}{2}p(t)dt, &\text{ for } {x \le \frac{2}{3}},\\
f(x) - (3x - 2)p(1 - x) - 2\int_0^{1 - x}p(t)dt, &\text{ for } x \ge \frac{2}{3}.
\end{cases}
\end{align*}
there holds $4q_*(2x) = q_*(1 - x)$. Thus by \cref{prop:median_projection} $q_*(x)$ is the projection of $\mu_q$ generated by $q(2x) = f(2x) - 2\int_0^\frac{x}p(t)dt = \alpha^{2 x}+2\frac{2 \alpha^{2 x}+\alpha^{1-x}}{(1-3 x) \ln
\alpha}+6\frac{\alpha^{2 x}-\alpha^{1-x}}{(1-3 x)^2 \ln ^2\alpha}$.

By \cref{prop:pcorrect} and \cref{prop:qcorrect} this construction is well-defined. Projections of $\mu_p + \mu_q$ on axes coincide with $\alpha^x$ in coordinates $u, v, w$ and with the uniform measure on $[l, r]$ in initial coordinates.

Thus the projections of $\mu = \mu_p + \mu_q + \mu_{lin}$ coincide with Lebesgue measure on $[0, 1]$.
\end{proof}
\section{The dual solution construction}\label{section:dual}
To prove that the measure $\mu$ from \cref{thm:osexists} is the primal solution it is enough to find a triple of functions $f, g, h: [0, 1] \to \mathbb{R}$ such that $f(x) + g(y) + h(z) \le c(x, y, z)$ and equality holds on the set $M$ by \cref{lem:slackness_conditions}. In this case the triple $(f, g, h)$ will be a dual solution of the related problem. In this section we will construct the dual solution for a wide class of cost functions.

We will construct the dual solution for $c(x, y, z) = \widehat{C}(\ln x + \ln y + \ln z)$ where $\widehat{C}$ is a bounded continuously differentiable strictly convex function on $(-\infty, 0]$. Our function $c(x, y, z) = xyz$ is a partial case for $\widehat{C}(t) = \exp(t)$. At the same time we will use the more convenient equivalent description. Namely, $c(x, y, z) = C(xyz)$ for some continuously differentiable function $C:[0, 1] \to \mathbb{R}$ and the function $tC'(t)$ strictly increases on the segment $[0, 1]$.

\subsection{Another description of the  support of primal solutions}
\begin{definition}\label{def:lambda_definition}
Set $c = lr^2$. Define a function $\lambda: [0, 1] \to \mathbb{R}$ as follows
\begin{align*}
\lambda(x)=\begin{cases}
(1-2x)^{2} & \text{if } x \in [0, l)\\
\frac{c}{x} & \text{if } x \in [l, r),\\
\frac{1}{2}x(1-x) & \text{if } x \in [r, 1].
\end{cases}
\end{align*}
\end{definition}
\begin{lemma}
The function $\lambda$ defined above is continuous and strictly decreases.
\end{lemma}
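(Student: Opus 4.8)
The plan is to verify continuity at the two junction points $x=l$ and $x=r$, and then check strict monotonicity piece-by-piece, using the defining relation $c = lr^2$ together with $r = 1-2l$. First I would check continuity at $x=l$: the left piece gives $(1-2l)^2 = r^2$, while the right piece gives $c/l = lr^2/l = r^2$, so the two agree. At $x=r$ the middle piece gives $c/r = lr^2/r = lr$, and the right piece gives $\tfrac12 r(1-r) = \tfrac12 r \cdot 2l = lr$ (using $1-r = 2l$), so again the values match. Hence $\lambda$ is continuous on $[0,1]$.

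For strict monotonicity I would treat each of the three intervals separately. On $[0,l)$ the function is $(1-2x)^2$; its derivative is $-4(1-2x) = -4(1-2x)$, which is strictly negative as long as $x < \tfrac12$, and since $l < \tfrac14 < \tfrac12$ this holds throughout $[0,l)$, so $\lambda$ strictly decreases there. On $[l,r)$ the function is $c/x$ with $c = lr^2 > 0$, which is manifestly strictly decreasing. On $[r,1]$ the function is $\tfrac12 x(1-x)$, with derivative $\tfrac12(1-2x)$; since $r = 1-2l > \tfrac12$ (because $l < \tfrac14$), we have $x \ge r > \tfrac12$, so the derivative is strictly negative on $(r,1]$ and $\lambda$ strictly decreases there as well.

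Finally, I would assemble these into global strict monotonicity: a continuous function that is strictly decreasing on each of finitely many closed intervals whose union is $[0,1]$ and which overlap at the endpoints is strictly decreasing on all of $[0,1]$. Concretely, for $x_1 < x_2$ in $[0,1]$ one either has both in the same piece (done), or one can insert the relevant junction point(s) $l$ and/or $r$ between them and chain the strict inequalities through the continuity-matched values.

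The only mild subtlety — and the one step worth stating carefully rather than the routine derivative computations — is making sure the sign conditions at the endpoints of the pieces are genuinely strict, i.e. that $l < \tfrac14$ (established earlier in the excerpt, where it is shown the root of $h$ satisfies $l < \tfrac16 < \tfrac14$) so that $1-2x > 0$ on $[0,l)$ and $x > \tfrac12$ on $[r,1]$. Given that, there is no real obstacle; the lemma is a direct verification.
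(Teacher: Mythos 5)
Your proposal is correct and follows essentially the same route as the paper: verify continuity at $x=l$ and $x=r$ using $c=lr^2$ and $r=1-2l$, then check that the derivative is negative on each of the three open pieces (using $l<\tfrac14$ and hence $r>\tfrac12$). The only difference is cosmetic — you spell out the chaining of strict inequalities across the junction points, a step the paper leaves implicit.
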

\begin{proof}
It suffices to check the continuity at points $l$ and $r$. For this it suffices to check that $(1 - 2l)^{2} = \frac{c}{l}$ and $\frac{c}{r} = \frac{1}{2}r(1-r)$.
All these equalities are trivial. 

Let us check that the derivative of $\lambda$ is negative everywhere except of the points $l$ and $r$: in these points $\lambda$ has no derivatives.

If $x \in (0, l)$ then $\lambda'(x) = 2(2x-1) < 0$, since $x < l < \frac{1}{2}$. If $x \in (l, r)$ then
$\lambda'(x)=-\frac{c}{x^{2}} < 0$ since $c > 0$. If $x \in (r, 1)$ then $\lambda'(x) = 1 - \frac{1}{2}x < 0$ since $x > r > \frac{1}{2}$. 

It follows from this that $\lambda$ strictly decreases.
\end{proof}

\begin{proposition}\label{prop:description_M}
Suppose that $M$ is the (hypothetical) primal solution support as in the previous sections. Then a point $(x, y, z)$ is contained in $M$ if and only if the following equalities hold
$\lambda(x)=yz$, $\lambda(y)=xz$, $\lambda(z)=xy$.
\end{proposition}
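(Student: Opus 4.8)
The plan is to verify the three equalities $\lambda(x)=yz$, $\lambda(y)=xz$, $\lambda(z)=xy$ directly on each of the four pieces $M_x$, $M_y$, $M_z$, $M_2$, and then to argue the converse by showing that the equalities pin a point down to one of these pieces. First I would treat the forward implication. On $M_2$, where $l\le x,y,z\le r$ and $xyz=lr^2=c$, we have $yz=c/x=\lambda(x)$ since $x\in[l,r]$, and symmetrically for the other two; this is immediate. On $M_x$, a point has the form $(t,1-2t,1-2t)$ with $0\le t\le l$, so $yz=(1-2t)^2=\lambda(t)=\lambda(x)$ by the first branch of \cref{def:lambda_definition}; and $xz=t(1-2t)$, which must equal $\lambda(1-2t)=\lambda(y)$. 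Since $1-2t\in[r,1]$ here (because $t\le l$ and $r=1-2l$), the relevant branch of $\lambda$ is $\tfrac12 w(1-w)$ with $w=1-2t$, giving $\lambda(y)=\tfrac12(1-2t)(2t)=t(1-2t)=xz$. The computation for $\lambda(z)=xy$ is identical, and the cases $M_y$, $M_z$ follow by the symmetry of all three equalities under permutation of coordinates.

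For the converse, suppose $(x,y,z)\in[0,1]^3$ satisfies $\lambda(x)=yz$, $\lambda(y)=xz$, $\lambda(z)=xy$. Multiplying the first two and dividing (taking care with zeros) gives $\lambda(x)/\lambda(y)=y/x$, i.e. $x\lambda(x)=y\lambda(y)$, and likewise $x\lambda(x)=y\lambda(y)=z\lambda(z)$. So the key object is the function $x\mapsto x\lambda(x)$: on $[l,r]$ it is the constant $c$, on $[0,l)$ it is $x(1-2x)^2$ (strictly increasing there, with value $<c$), and on $(r,1]$ it is $\tfrac12 x^2(1-x)$ (strictly decreasing there, with value $<c$). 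Hence $x\lambda(x)=y\lambda(y)=z\lambda(z)$ forces the common value $v:=x\lambda(x)$ to determine the location of each coordinate: if $v=c$ then $x,y,z\in[l,r]$ and $xyz = \lambda(x)\cdot x \cdot \tfrac{yz}{x}$ — more cleanly, $xyz=x\cdot yz=x\lambda(x)=c$, so $(x,y,z)\in M_2$. If $v<c$, each coordinate lies in $[0,l)\cup(r,1]$; the products $xy=\lambda(z)$, $yz=\lambda(x)$, $xz=\lambda(y)$ together with the marginal-type constraint (the relation $2p(a)=1-q(a)$ derived earlier, equivalently that exactly one coordinate is small) force exactly one of $x,y,z$ to be in $[0,l)$ and the other two equal and in $(r,1]$; writing the small one as $t$ and using $yz=\lambda(t)=(1-2t)^2$ with $y=z$ gives $y=z=1-2t$, i.e. the point lies on $M_x$ (or $M_y$, $M_z$).

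The main obstacle is the converse direction, specifically ruling out ``mixed'' configurations — e.g. two small coordinates and one large, or one small, one in $(r,1]$, one in $[l,r]$ — using only the three equalities. The clean tool is the strict monotonicity of $x\lambda(x)$ on each of the two outer intervals together with its constancy on the middle one: since $x\lambda(x)=y\lambda(y)=z\lambda(z)$ and the map $x\mapsto x\lambda(x)$ restricted to $[0,l]$ is a bijection onto $[0,c]$ while on $[r,1]$ it is a bijection onto $[0,c]$ as well, a common value $v<c$ has exactly one preimage in $[0,l)$ and exactly one in $(r,1]$; one then checks that the configuration with two coordinates equal to the large preimage and one equal to the small preimage is the only one consistent with all three product equations (the symmetric alternatives either violate $x+y+z$-type bounds coming from $l<\tfrac14$ or contradict $xy=\lambda(z)$ numerically). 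I would organize this as a short case analysis on how many coordinates fall in $[l,r]$, handling the count $3$ (gives $M_2$), the count $0$ (gives $M_x\cup M_y\cup M_z$), and showing counts $1$ and $2$ are impossible.
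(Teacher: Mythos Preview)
Your outline is correct and follows the same route as the paper: verify the three equalities piecewise on $M$ for the forward direction, and for the converse introduce $\kappa(x)=x\lambda(x)$, use that it is strictly increasing on $[0,l]$, constant $=c$ on $[l,r]$, strictly decreasing on $[r,1]$, and then run a case analysis on where $x,y,z$ land.

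Two points worth tightening. First, you obtain $x\lambda(x)=y\lambda(y)=z\lambda(z)$ by multiplying and dividing; the paper simply notes $x\lambda(x)=x\cdot yz=xyz$, and likewise for $y,z$, so all three equal $xyz$ with no division or zero-handling needed. Second, your appeal to ``the marginal-type constraint $2p(a)=1-q(a)$'' and to ``$x+y+z$-type bounds'' is off target: those relations came from heuristic reasoning about the measure and cannot be invoked here. The elimination of the bad configurations (all three small, all three large, two small and one large) is done exactly as you also hint --- by substituting into $\lambda(z)=xy$ etc.\ and solving. For instance, with $x=y=s\in[0,l)$ and $z\in(r,1]$, the equality $\kappa(x)=\kappa(z)$ forces $z=1-2s$, and then $\lambda(z)=s(1-2s)=xy=s^2$ gives $s=0$ or $s=1/3$; both are excluded (the first because $\lambda(0)=1\neq0$, the second by $l<\tfrac14$). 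The paper's five cases are precisely this computation, so once you drop the extraneous heuristics your argument and the paper's coincide.
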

\begin{proof}
$\Leftarrow$ Suppose that $\kappa(x)=x\lambda(x)$. If $\lambda(x)=yz$,
$\lambda(y)=xz$ and $\lambda(z)=xy$ then $\kappa(x)=\kappa(y)=\kappa(z)=xyz$. 

The function $\kappa(x)$ is continuous and has a continuous derivative on intervals $(0, l)$, $(l, r)$ and $(r, 1)$. If $x \in (0, l)$ then $\kappa'(x) = (1-2x)^{2} - 2x(1 - 2x) = (1 - 2x)(1 - 4x) > 0$
since $x < l < \frac{1}{4}$. On the segment $[l, r]$ $\kappa$ is constant: $\kappa(x) = lr^2 = c$.
If $x \in (r, 1)$ then $\kappa'(x) = x(1-x) - \frac{1}{2}x^2=x\left(1 - \frac{3}{2}x\right) < 0$ since $x > r > \frac{2}{3}$. So $\kappa(x)$ strictly increases on the segment $[0, l]$, is constant on $[l, r]$, and strictly decreases on $[r, 1]$.

Note in addition that $\kappa(0) = \kappa(1) = 0$. Thus, the equation $\kappa(x) = c_0$ for $0 \le x \le 1$
\begin{enumerate}
    \item has no root if $c_0 < 0$ or $c_0 > c$;
    \item has exactly two roots if $0 \le c_0 < c$: one of them lies on the interval $[0, l)$ and another one lies on the interval $(r, 1]$;
    \item holds on whole segment $[l, r]$ if $c_0 = c$.
\end{enumerate}

If $\lambda(x) = yz$, $\lambda(y) = xz$, and $\lambda(z) = xy$ then $\kappa(x)=\kappa(y)=\kappa(z)=xyz$ and one of the following cases occurs:
\begin{enumerate}
\item $x, y, z \in [l, r]$. In this case $c = \kappa(x) = \kappa(y) = \kappa(z) = xyz$ so $(x, y, z) \in M$.
\item $x = y = z \in[0, l)$. Then $\lambda(x)=x^2$. On the other hand if $x\in[0, l)$ then $\lambda(x)=(1-2x)^2$. The equation $(1-2x)^{2}=x^{2}$
has two solutions $x = 1$ and $x = \frac{1}{3}$. But these values are not feasible because
$x \in [0, l)$ and $l < \frac{1}{6}$. So, this case is not possible.
\item $x=y=z\in(r, 1]$. Similarly in this case $\lambda(x)=x^2$. On the other hand if $x \in (r, 1]$ then $\lambda(x)=\frac{1}{2}x(1-x)$.
Equation $\frac{1}{2}x(1-x)=x^2$ has two solutions $x = 0$ and $x = \frac{1}{3}$, but they do not belong 
$(r, 1]$ for any $r > \frac{1}{2}$. So, this case is not possible.
\item $x = y \in [0, l)$, $z\in(r, 1]$ and similar cases obtained by permutations of coordinates. One has $x(1-2x)^2 = \kappa(x) = \kappa(z) = \frac{1}{2}z^2(1-z)$.
The function $\kappa(z)$ strictly decreases on the interval $(r, 1]$, hence  for a fixed $x$ there exists at most one $z$ satisfying this equality. But $z=1-2x \in (r, 1]$ and $\kappa(z) = \frac{1}{2}z^2(1-z) = \frac{1}{2}(1-2x)^2 \cdot 2x = \kappa(x)$.
This means that $z=1-2x$. In this case $x(1-2x) = \frac{1}{2} z(1-z) = \lambda(z)=xy = x^2$.
Hence $x=0$ or $x=\frac{1}{3}$. But for $x=0$ one has $1 = \lambda(x) = yz = xz = 0$. The value $x=\frac{1}{3}$
 is not suitable because $x \in [0, l)$ and $l < {1 \over 6}$. So, this case is not possible.
\item $x \in [0, l)$, $y = z \in (r, 1]$ and  similar cases obtained by permutations of coordinates. Arguing as above, we get $\kappa(x) = \kappa(z)$,
$x \in [0, l)$, $z \in (r, 1]$ so $y = z = 1-2x$. The points $(x, 1-2x, 1-2x)$ are contained in $M$ for any $x \in [0, l)$.
\end{enumerate}
So, the only possible cases are cases 1 and 5. In these cases $(x, y, z) \in M$.

$\Rightarrow$ The set $M$ consists of four parts: $M = M_x \cup M_y \cup M_z \cup M_2$. If $(x, y, z) \in M_x$, then $y = z = 1 - 2x$. Hence $\lambda(x) = (1 - 2x)^2$ and $yz = (1 - 2x)^2$. $\lambda(y) = \lambda(1 - 2x) = {1 \over 2}(1 - 2x)\cdot 2x = x(1 - 2x) = xz$ since $r \le 1 - 2x \le 1$. Similarly $\lambda(z) = xy$.

Hence if $(x, y, z) \in M_x$, then $\lambda(x) = yz$, $\lambda(y) = xz$ and $\lambda(z) = xy$. By symmetry, these conditions hold for any $(x, y, z) \in M_y$ and for any $(x, y, z) \in M_z$.

If $(x, y, z) \in M_2$, then $l \le x, y, z \le r$ and $xyz = c$. This means that $\lambda(x) = {c \over x} = yz$, $\lambda(y) = {c \over y} = xz$, $\lambda(z) = {c \over z} = xy$.
\end{proof}

\subsection{The construction of the dual solution}

If $M$ is indeed a support of the primal solution and $f, g, h$ is a dual solution, then by complementary slackness $f(x)+g(y)+h(z)$ is equal to $c(x, y, z)$ on almost all points of $M$. This will help us to guess the form of $f, g, h$.

\begin{lemma}\label{thm:weak_uniqueness}
Assume that $c(x, y, z) = C(xyz)$ for some continuously differentiable function $C:[0, 1] \to \mathbb{R}$
and  the  triple of functions 
$$f, g, h: [0, 1] \to \mathbb{R}
$$ satisfies inequality $f(x) + g(y) + h(z) \le c(x, y, z)$ and $f(x) + g(y) + h(z) = c(x, y, z)$ for all $(x, y, z) \in M$.
Then the functions $f, g, h$ are continuously differentiable and $f'(x)=\lambda(x)C'(x\lambda(x))$,
$g'(y)=\lambda(y)C'(y\lambda(y))$, $h'(z)=\lambda(z)C'(z\lambda(z))$.
\end{lemma}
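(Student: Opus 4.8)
The plan is to exploit the fact that the three coordinates play symmetric roles in $M$ and that $M$ is "rich enough" to pin down the derivatives. First I would observe that by Proposition~\ref{prop:description_M}, for every point $(x,y,z)\in M$ we have $yz=\lambda(x)$, so on $M$ the cost equals $c(x,y,z)=C(xyz)=C(x\lambda(x))$, and similarly $C(y\lambda(y))$ and $C(z\lambda(z))$; in particular $x\lambda(x)=y\lambda(y)=z\lambda(z)$ on $M$ (all equal to $\kappa$, which is constant $c$ on $[l,r]$ and takes each value in $[0,c)$ twice). The key structural input is that, fixing any $x_0\in[0,1]$, the slice $\{(x_0,y,z)\in M\}$ is non-degenerate: for $x_0\in[l,r]$ it is the arc $yz=c/x_0$ with $y$ ranging over a genuine interval, and for $x_0\in[0,l)$ it consists of the points on $M_x$, $M_y$, $M_z$ meeting that value of the first coordinate, which still gives enough freedom. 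This lets us "freeze" two variables and vary the third.

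The main step is to show differentiability of $f$ (and by symmetry $g,h$) together with the formula for $f'$. I would fix a point $(x_0,y_0,z_0)\in M_2$ with $l<x_0,y_0,z_0<r$, so that a whole neighborhood of values is attainable. For $x$ near $x_0$, choose (using the arc $yz=\lambda(x)$, e.g.\ keeping $z=z_0$ fixed and letting $y=y(x)$ solve $yz_0=\lambda(x)$, which is possible while $x,y(x),z_0\in[l,r]$) a point $(x,y(x),z_0)\in M$. Then equality on $M$ gives
$$f(x)+g(y(x))+h(z_0)=C(x y(x) z_0)=C(x\lambda(x)).$$
Combined with the constraint inequality $f(x)+g(y_0)+h(z_0)\le C(xy_0z_0)$ for all $x$ (fixing $y_0,z_0$), one obtains upper and lower bounds on the difference quotients of $f$: the point $(x_0,y(x_0),z_0)=(x_0,y_0,z_0)$ is an interior maximum of $x\mapsto C(xy_0z_0)-f(x)-g(y_0)-h(z_0)$, so at any point of differentiability $f'(x_0)=\partial_x C(xy_0z_0)|_{x_0}=y_0z_0\,C'(x_0y_0z_0)=\lambda(x_0)C'(x_0\lambda(x_0))$. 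To upgrade this to genuine differentiability everywhere, I would note that $g$ enjoys the same representation, so $f(x)-C(x\lambda(x))$ differs from $-g(y(x))-h(z_0)$ which is handled symmetrically; a cleaner route is to directly use the two-sided estimate: for $x_1<x_2$ both near $x_0$, the inequality at $x_1$ evaluated against the equality-point of $x_2$ and vice versa sandwiches $f(x_2)-f(x_1)$ between $C(x_2\lambda(x_2))-C(x_1\lambda(x_1)) \pm o$, forcing $f$ to be Lipschitz and then $C^1$ with the stated derivative, since $\lambda$ is continuous and $x\mapsto x\lambda(x)=\kappa(x)$ is continuous so $C'(\kappa(x))$ is continuous.

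There is a boundary issue to address: for $x\in[0,l)$ the point lies on a one-dimensional piece ($M_x$, or $M_y/M_z$), so we cannot freely move $x$ along a two-parameter family. Here I would use that on $M_x=\{(t,1-2t,1-2t)\}$ we have $f(t)+g(1-2t)+h(1-2t)=C(t(1-2t)^2)$ as an identity in $t\in[0,l)$; since $g,h$ are already known to be $C^1$ on $(r,1]$ (that range being covered by the $M_2$-analysis applied to the second/third coordinate, whose values there fill $[l,r]$ — wait: one must be careful, $1-2t\in(r,1]$, which is \emph{outside} $[l,r]$). So the correct order is: first establish the formula for $g',h'$ on the \emph{whole} interval — including $(r,1]$ — by running the same slice argument but freezing the \emph{second} coordinate at a value in $(r,1]$ (attainable via $M_z$: points $(1-2t,1-2t,t)$ have second coordinate $1-2t\in(r,1]$, with the first coordinate $1-2t$ and third coordinate $t$ both varying), and noting that from such points the first coordinate also sweeps $(r,1]$. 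Iterating this symmetric bootstrap across the three coordinates and the regions $[0,l)$, $[l,r]$, $(r,1]$ yields $C^1$-regularity and the derivative formula on all of $[0,1]$. I expect the bootstrapping across the three pieces — making sure every value of every coordinate is hit by a sufficiently rich family of points of $M$ so that the maximality/sandwich argument applies — to be the main obstacle; the differential-geometric computation that the interior-maximum condition gives exactly $\lambda(x)C'(x\lambda(x))$ is then immediate from $\partial_x[C(xyz)]=yz\,C'(xyz)$ and $yz=\lambda(x)$, $xyz=x\lambda(x)$ on $M$.
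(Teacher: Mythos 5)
Your proposal circles the correct idea but then derails itself. The ``cleaner route'' you sketch in the middle --- the two-sided (sandwich) estimate obtained by pairing the constraint inequality at one coordinate value with the equality at a neighboring one, and vice versa --- is exactly the paper's proof, and it is enough on its own. The paper simply observes: for every $x_0 \in [0,1]$ there is at least one point $(x_0, y_0, z_0) \in M$ (because $M$ projects onto $[0,1]$ in the first coordinate, with $y_0 z_0 = \lambda(x_0)$ by \cref{prop:description_M}); the constraint then gives $f(x) - f(x_0) \le C(x\lambda(x_0)) - C(x_0\lambda(x_0))$ for all $x$, and interchanging $x$ and $x_0$ gives the matching lower bound $f(x) - f(x_0) \ge C(x\lambda(x)) - C(x_0\lambda(x))$; since $\lambda$ and $C'$ are continuous (and $\kappa(x)=x\lambda(x)$ is continuous), both sides have the same derivative $\lambda(x_0)C'(x_0\lambda(x_0))$ at $x=x_0$, forcing $f$ to be differentiable there with that derivative. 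No interior-maximum step, no implicit-function slice, and no bootstrap is needed.

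The place where you go wrong is the ``boundary issue'' paragraph. You diagnose a problem for $x\in[0,l)$ or $x\in(r,1]$ because you think the argument needs ``a two-parameter family'' of points in $M$ over each coordinate value. It does not. The sandwich argument needs exactly one partner point $(x, y_x, z_x) \in M$ with $y_x z_x = \lambda(x)$ for each $x$, and the one-dimensional pieces $M_x$, $M_y$, $M_z$ supply those partners for $x\in[0,l)$ and $x\in(r,1]$ just as well as $M_2$ does for $x\in[l,r]$ (e.g.\ $(x, 1-2x, 1-2x) \in M_x$ when $x < l$, and $(1-2t,1-2t,t)\in M_z$ covers first-coordinate values in $(r,1]$). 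So the ``bootstrap across regions'' you propose to resolve this is solving a non-problem; worse, it obscures the fact that your primary route (the interior-maximum argument) never establishes differentiability of $f$ but only computes $f'$ at points where $f$ happens to be differentiable. If you present the sandwich estimate as the main argument and delete the bootstrap discussion, you recover the paper's proof.
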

\begin{proof}
For any $x_0$ there exist $y_0$ and $z_0$ such that $(x_0, y_0, z_0) \in M$. This means that $f(x_0) + g(y_0) + h(z_0) = c(x_0, y_0, z_0) = C(x_0\lambda(x_0))$.
In addition, for any $x$ one has  
$$f(x) + g(y_0) + h(z_0) \le c(x, y_0, z_0) = C(x\lambda(x_0)).
$$
Hence for any $x_0, x \in [0, 1]$ one has
$f(x) - f(x_0) \le  C(x\lambda(x_0))-C(x_0\lambda(x_0))$. 

Passing to the limit $x \to x_0$  one gets
\begin{align*}
C(x\lambda(x_0))-C(x_0\lambda(x_0)) = (x - x_0) \cdot \lambda(x_0)C'(x_0\lambda(x_0)) + o(|x - x_0|).
\end{align*}

Interchanging $x_0$ and $x$ one gets $f(x_0) - f(x) \le C(x_0\lambda(x)) - C(x\lambda(x))$. By the mean value theorem, $C(x_0\lambda(x)) - C(x\lambda(x)) = (x_0 - x)\lambda(x)C'(\xi(x))$, where $\xi(x) \in [x_0\lambda(x), x\lambda(x)]$. If $x \to x_0$, then $\xi(x) \to x_0\lambda(x_0)$ and 
\begin{align*} 
C(x_0\lambda(x))-C(x\lambda(x)) & = (x_0 - x) \lambda(x)C'(x_0\lambda(x_0)) + o(|(x_0 - x) \lambda(x)|) 
\\& = (x_0 - x) \lambda(x)C'(x_0\lambda(x_0)) + o(|x - x_0|) 
\\& = (x_0 - x) \lambda(x_0)C'(x_0\lambda(x_0)) + o(|x - x_0|).
\end{align*}

This means that 
\begin{multline*}
    \lambda(x_0)C'(x_0\lambda(x_0)) \cdot (x - x_0) + o(|x - x_0|) \\
    \le f(x) - f(x_0) \\
    \le \lambda(x_0)C'(x_0\lambda(x_0)) \cdot (x - x_0) + o(|x - x_0|).
\end{multline*}
Hence $f(x)$ has a derivative at the point $x = x_0$ and it is equal to $\lambda(x_0)C'(x_0\lambda(x_0))$. This function is continuous since $\lambda$ and $C'$ are continuous.

One can check in the same way  the statements of the theorem for the functions $g$ and $h$.
\end{proof}

\begin{theorem}
Suppose that $c(x, y, z) = C(xyz)$ for some continuously differentiable function $C:[0, 1] \to \mathbb{R}$ and the function $U(t) = tC'(t)$ strictly increases on the segment $[0, 1]$. 
Suppose that $\hat{f}(s) = \int_{0}^{s} \lambda(t)C'(t\lambda(t))~dt$.
Then the arg max of the function $\hat{f}(x) + \hat{f}(y) + \hat{f}(z) - c(x, y, z)$ contains the set $M$.
\end{theorem}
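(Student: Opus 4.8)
To show that $M$ lies in the arg max of $\Phi(x,y,z):=\hat{f}(x)+\hat{f}(y)+\hat{f}(z)-c(x,y,z)$, the plan is a nested partial maximization in the three variables, each step exploiting that $U(t)=tC'(t)$ is increasing and $\lambda$ is decreasing. First I would note that $\hat{f}$ is $C^1$ with $\hat{f}'(s)=\lambda(s)C'(s\lambda(s))$, and that $C'\ge 0$ on $[0,1]$ since $U(t)>U(0)=0$ for $t>0$; write $\mu=\lambda^{-1}\colon[0,1]\to[0,1]$ for the decreasing inverse of $\lambda$. For the first variable: for fixed $y,z$, substituting $s=tyz$ in $C(x'yz)-C(xyz)$ yields the exact identity
\[
\Phi(x',y,z)-\Phi(x,y,z)=\int_{x}^{x'}\bigl(\lambda(t)C'(t\lambda(t))-yz\,C'(tyz)\bigr)\,dt .
\]
Multiplying the integrand by $t>0$ turns it into $U(t\lambda(t))-U(tyz)$, whose sign is that of $\lambda(t)-yz$, hence positive for $t<\mu(yz)$ and negative for $t>\mu(yz)$; so $x\mapsto\Phi(x,y,z)$ increases up to $x=\mu(yz)$ and decreases afterwards, attaining its maximum precisely where $\lambda(x)=yz$. (The endpoint $x=0$ and the corners $x=l,r$ of $\lambda$ cause no trouble: the same one-sided sign information applies.) I then write $m(y,z):=\Phi(\mu(yz),y,z)$, which is continuous and symmetric in $y,z$.

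For the second variable I would use the envelope theorem: at an interior smooth maximizer of $\Phi(\cdot,y,z)$ the $x$-derivative vanishes, so $\partial_y m(y,z)=\hat{f}'(y)-x^*z\,C'(x^*yz)$ with $x^*=\mu(yz)$; since $x^*yz=x^*\lambda(x^*)=\kappa(x^*)$ and $y\lambda(y)=\kappa(y)$ (with $\kappa(t)=t\lambda(t)$), multiplying by $y>0$ gives $y\,\partial_y m=U(\kappa(y))-U(\kappa(x^*))$, which has the sign of $\kappa(y)-\kappa(\mu(yz))$. Now $\Phi$ attains its maximum over the cube at some $(x_0,y_0,z_0)$ by compactness; fixing $y_0,z_0$ and invoking the first step forces $\lambda(x_0)=y_0z_0$, whereupon $(y_0,z_0)$ maximizes $m$ and the first-order condition in $y$ gives $\kappa(y_0)=\kappa(x_0)$, i.e.\ $y_0\lambda(y_0)=x_0\lambda(x_0)=x_0y_0z_0$, hence $\lambda(y_0)=x_0z_0$; by the symmetry of $m$ also $\lambda(z_0)=x_0y_0$. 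By \cref{prop:description_M} this means $(x_0,y_0,z_0)\in M$, so the maximum of $\Phi$ over $[0,1]^3$ is attained somewhere on $M$.

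It then remains to check that $\Phi$ is constant on $M$. On $M_x=\{(t,1-2t,1-2t):0\le t\le l\}$, using $\lambda(t)=(1-2t)^2$ and $\lambda(1-2t)=t(1-2t)$, one computes that $\frac{d}{dt}\Phi(t,1-2t,1-2t)$ collapses to $C'(t(1-2t)^2)(1-2t)\bigl[(1-2t)-4t-(1-6t)\bigr]=0$, and by symmetry the same holds on $M_y$ and $M_z$; on $M_2$ one has $xyz\equiv c$ and $\hat{f}'(x)=\frac{c}{x}C'(c)$, so $d\bigl(\hat{f}(x)+\hat{f}(y)+\hat{f}(z)\bigr)=cC'(c)\,d\ln(xyz)=0$. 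Continuity at the junctions then gives $\Phi\equiv V$ on $M$ for a single constant $V$, and together with the previous step this yields $\max_{[0,1]^3}\Phi=V=\Phi|_M$, i.e.\ $M$ is contained in the arg max of $\Phi$.

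I expect the hard part to be making the second step rigorous: $\mu$ is only piecewise smooth (corners at $lr$ and $r^2$) and the maximizer $(x_0,y_0,z_0)$ may have coordinates in $\{0,1,l,r\}$, so the quick first-order argument must be backed up by a case analysis on the position of each coordinate relative to $l$ and $r$ — essentially the same trichotomy, and the same five cases, as in the proof of \cref{prop:description_M}. Connected with this, $\kappa$ is flat on $[l,r]$, so $m$ is flat on an entire two-dimensional region (the structural source of non-uniqueness of the primal problem), and one must confirm that $m$ takes the value $V$ there and is not exceeded nearby.
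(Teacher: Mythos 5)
Your argument and the paper's share the same spine: show $\Phi=T$ is constant on $M$, use compactness to locate a global maximizer $(x_0,y_0,z_0)$ on the cube, show that any maximizer must satisfy $\lambda(x_0)=y_0z_0$, $\lambda(y_0)=x_0z_0$, $\lambda(z_0)=x_0y_0$, and close with \cref{prop:description_M}. Where you diverge is in how the first-order analysis is organized, and your route is heavier than it needs to be. You maximize one coordinate at a time, which obliges you to introduce $\mu=\lambda^{-1}$ and the partial value function $m(y,z)=\Phi(\mu(yz),y,z)$ and then to differentiate $m$ by an envelope argument; that is precisely what creates the issues you flag at the end (kinks of $\mu$ at $lr$ and $r^2$, the flat stretch of $\kappa$, maximizers on the boundary). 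The paper simply never differentiates $m$. Since $\lambda$ and $C'$ are continuous, $\hat f$ and hence $T$ are globally $C^1$, so the paper takes the global maximizer directly, writes $\frac{\partial T}{\partial x}(x,y_0,z_0)=\frac{U(x\lambda(x))-U(x y_0 z_0)}{x}$ exactly as in your first step, and reads off $\lambda(x_0)=y_0z_0$ from strict monotonicity of $U$ when $x_0\in(0,1)$; the corner cases $x_0=0$ and $x_0=1$ are handled by a short mean-value / one-sided-sign argument that yields the same relation in the limit. All the regularity worries that your $\mu$-$m$ detour generates vanish once you differentiate $\Phi$ itself rather than $m$. For constancy on $M$, the paper is also leaner: $\nabla T\equiv 0$ on $M$ is just the ``$\Leftarrow$'' content of \cref{prop:description_M}, since $\lambda(x)=yz$, $\lambda(y)=xz$, $\lambda(z)=xy$ force $x\lambda(x)=y\lambda(y)=z\lambda(z)=xyz$, and $M$ is path-connected, so $T|_M$ is constant; your piece-by-piece verification on $M_x$, $M_y$, $M_z$, $M_2$ is correct but reproves this by hand. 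In short, your outline is sound and the acknowledged gap is fillable, but the paper's direct form of the same idea avoids the envelope theorem, avoids $\mu$ and $m$ altogether, and handles the boundary cases explicitly.
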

\begin{proof}
Assume that $T(x, y, z) = \hat{f}(x) + \hat{f}(y) + \hat{f}(z) - c(x, y, z) = \hat{f}(x) + \hat{f}(y) + \hat{f}(z) - C(xyz)$.
If $(x, y, z) \in M$ then 
\begin{equation*}
\nabla T(x, y, z) =
\begin{pmatrix}
\lambda(x)C'(x\lambda(x)) - yzC'(xyz)\\
\lambda(y)C'(y\lambda(y)) - xzC'(xyz)\\ \lambda(z)C'(z\lambda(z)) - xyC'(xyz)
\end{pmatrix}= \vec{0}.
\end{equation*}
Hence, all values of $T$ on the set $M$ are the same since $M$ is path-connected.

The function $T$ is continuous on the compact set $[0, 1]^{3}$, so the function $T$ reaches its maximum at some point $(x_0, y_0, z_0)$. Then either
$x_0$ lies on the boundary of the segment $[0, 1]$ or $\frac{\partial T}{\partial x}(x_0, y_0, z_0) = 0$. 

For any $x > 0$ the following equality holds
\[
\frac{\partial T}{\partial x}(x, y_0, z_0) = \lambda(x)C'(x\lambda(x))-y_0z_0C'(xy_0z_0)=\frac{U(x\lambda(x)) - U(xy_0z_0)}{x}.
\]

Assume that $x_0 = 0$. By the mean value theorem
for any $x > 0$ there exists $0 < \xi(x) < x$ such that
\begin{multline*}
T(x, y_0, z_0) - T(x_0, y_0, z_0) = x\frac{\partial T}{\partial x}(\xi(x), y_0, z_0)\\
=\frac{x}{\xi(x)}\left(U[\xi(x)\lambda(\xi(x))]-U[\xi(x)y_0z_0]\right).
\end{multline*}
One has
$T(x, y_0, z_0) \le T(x_0, y_0, z_0)$ since $(x_0, y_0, z_0)$ is a maximum  point of $T$. Hence, $U[\xi(x)\lambda(\xi(x))] \le U[\xi(x)y_0z_0]$ and $\xi(x)\lambda(\xi(x)) \le \xi(x)y_0z_0$ since $U$ strictly increases. This means that $\lambda(\xi(x)) \le y_0z_0$ for all $x > 0$. If $x \to 0$ then $\lambda(\xi(x)) \to \lambda(0) = 1$. Thus $y_0z_0 \ge 1 \Rightarrow \lambda(x_0) = 1 = y_0z_0$.

Suppose that $x_0 = 1$. In this case $\frac{\partial T}{\partial x}(x_{0},y_{0},z_{0})$
must be nonnegative. But $\frac{\partial T}{\partial x}(x_0, y_0, z_0) = \frac{U(x_0\lambda(x_0)) - U(x_0y_0z_0)}{x_0} = U(0) - U(y_0z_0)$.
The function $U(t)$ strictly increases, hence $y_0z_0 = 0$. This implies $0 = \lambda(x_{0}) = y_0z_0$.

Otherwise one has $\frac{\partial T}{\partial x}(x_0, y_0, z_0) = \frac{1}{x_0}(U(x_0\lambda(x_0))-U(x_0y_0z_0)) = 0$. The function $U(t)$ strictly increases. Hence $x_0\lambda(x_0)=x_0y_0z_0$ and  $\lambda(x_0)=y_0z_0$.

Consequently, if the function $T$ has  maximum at the point $(x_0, y_0, z_0)$, one gets $\lambda(x_0) = y_0z_0$. Similarly, one can prove that $\lambda(y_0) = x_0z_0$
and $\lambda(z_0) = x_0y_0$. Hence by \cref{prop:description_M} $(x_{0},y_{0},z_{0}) \in M$. Since  $T$ is constant on  $M$, one has $M \subset \arg\max T$.
\end{proof}

Summarizing the results from the last two sections we get 

\begin{theorem}\label{thm:primal_dual}
Suppose that $c(x, y, z) = C(xyz)$ for some continuously differentiable function $C:[0, 1] \to \mathbb{R}$ and the function $tC'(t)$
strictly increases on the segment $[0, 1]$. Set: 
$$\hat{f}(s) = \int_{0}^{s}\lambda(t)C'(t\lambda(t))~dt.$$ Then for any constants $C_x$, $C_y$, $C_z$ such that $$C_x + C_y + C_z = C(0) - 2\int_{0}^{1}\lambda(t)C'(t\lambda(t))~dt$$ the following inequality holds 
$$(\hat{f}(x) + C_x) + (\hat{f}(y) + C_y) + (\hat{f}(z) + C_z) \le c(x, y, z)$$
with equality on $M$.

This means by \cref{lem:slackness_conditions} that the triple $(\hat{f} + C_x,~\hat{f} + C_y,~\hat{f} + C_z)$ is the dual solution for the cost function $c(x, y, z)$ and any probability measure $\mu$ such that $\Pr_X(\mu) = \Pr_Y(\mu) = \Pr_Z(\mu) = \lambda$ and $\supp(\mu) \subset M$ is the primal solution to the related problem.

Moreover such a measure $\mu$ exists by \cref{thm:osexists}.
\end{theorem}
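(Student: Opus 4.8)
The plan is to bolt together the two substantive results already in hand — the theorem asserting that $M \subseteq \arg\max T$ for $T(x,y,z) = \hat{f}(x) + \hat{f}(y) + \hat{f}(z) - c(x,y,z)$, and \cref{thm:osexists} on the existence of a $(3,1)$-stochastic measure supported on $M$ — and to fix the additive constant by evaluating $T$ at one convenient point of $M$. A preliminary remark is that $\hat{f}$ is well defined: by the lemma following \cref{def:lambda_definition} the function $\lambda$ is continuous and bounded on $[0,1]$, and $C'$ is continuous, so $t \mapsto \lambda(t)C'(t\lambda(t))$ is continuous, hence integrable; thus $\hat{f}$ is $C^1$ with $\hat{f}(0)=0$.

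The core step is the evaluation of $\max_{[0,1]^3} T$. By the previous theorem, $T$ attains its maximum on a set containing $M$, and since $T$ is constant on the path-connected set $M$ it is enough to compute $T$ at a single point of $M$. I would take $(0,1,1) \in M_x$ (the value $t=0$ in the parametrization of $M_x$), which gives
\[
\max_{[0,1]^3} T \;=\; T(0,1,1) \;=\; \hat{f}(0) + 2\hat{f}(1) - C(0) \;=\; 2\int_0^1 \lambda(t)C'(t\lambda(t))\,dt - C(0).
\]
By the hypothesis relating $C_x, C_y, C_z$ to this integral, the right-hand side equals $-(C_x + C_y + C_z)$. Consequently, for every $(x,y,z) \in [0,1]^3$ one has $\hat{f}(x) + \hat{f}(y) + \hat{f}(z) - c(x,y,z) \le -(C_x + C_y + C_z)$, which is exactly the asserted inequality after moving the constants; moreover it is an equality precisely on $\arg\max T$, and in particular on $M$.

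To finish with primal/dual optimality I would invoke \cref{lem:slackness_conditions} with $f_1 = \hat{f} + C_x$, $f_2 = \hat{f} + C_y$, $f_3 = \hat{f} + C_z$: these satisfy $\sum_i f_i \le c$ with equality on $M$. Let $\mu$ be the $(3,1)$-stochastic measure produced by \cref{thm:osexists}, with $\supp(\mu) \subseteq M$. Since $M$ is closed — a finite union of closed segments together with the closed surface patch $M_2$ — and $\mu(\supp(\mu)) = 1$, we get $\mu(M) = 1$, so the hypotheses of the slackness lemma are met and $\mu$ is a primal solution while $(\hat{f}+C_x, \hat{f}+C_y, \hat{f}+C_z)$ is a dual solution.

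I do not expect a genuine obstacle here, since all the analytic work sits in the two cited results; the only points that need care are verifying that $\hat{f}$ is well defined, selecting a point of $M$ at which $T$ is trivial to evaluate so that the constant falls out cleanly, and checking the measurability/mass hypothesis $\mu(M)=1$ required by \cref{lem:slackness_conditions}.
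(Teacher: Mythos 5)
Your proposal is correct and takes essentially the same route as the paper: the paper states this theorem as a summary of the preceding argmax theorem (that $M \subset \arg\max T$), the constant identity, and Theorem~\ref{thm:osexists}, and you have simply spelled out that assembly explicitly, including the evaluation at $(0,1,1) \in M_x$ to pin down the constant and the verification that $M$ is closed so that $\supp(\mu)\subset M$ gives $\mu(M)=1$ as required by Lemma~\ref{lem:slackness_conditions}.
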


We note that any primal solution is universal in a sense it is the same for the cost functions of type $C(xyz)$ where $tC'(t)$ is strictly increasing on $[0, 1]$. It is important for the proof that $M$ is path-connected. Numerical experiments for other marginals show that sometimes the support of a primal solution is not necessarily path-connected. For example for a measure $SF$ on $[0, 5]$ given by a density

\begin{align*}
\rho_{SF}(t) = \begin{cases}
\frac{1}{15} \text{, if } t \in [0, 1] \cup [2, 3] \cup [4, 5],\\
\frac{2}{5} \text{, if } t \in (1, 2) \cup (3, 4),
\end{cases}
\end{align*}
primal solution (more precisely the result of \cref{alg:primal_solution_approximation_faster}) for the cost function $c(x, y, z) = xyz$ is pictured on \cref{fig:SF}.

\begin{figure}[h]
\centering \label{fig:SF}
\includegraphics*[scale=0.3]{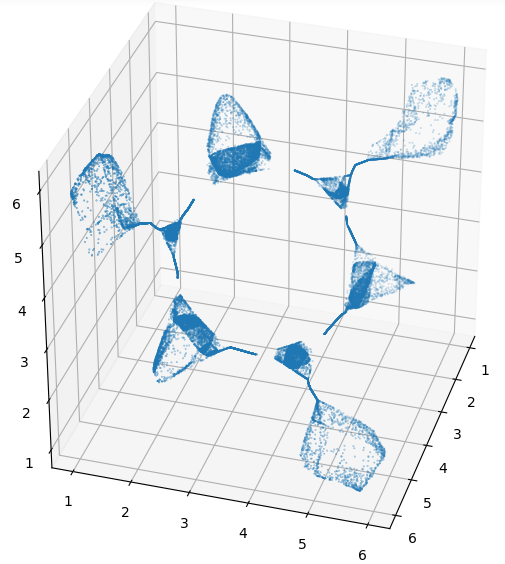} \\
\caption{Primal solution for marginals $SF$}
\end{figure}

\subsection{Construction for the cost function $c(x, y, z) = xyz$}\label{subsection:explicit_dual_xyz}

Suppose that $$c(x, y, z) = xyz = C(xyz),$$ 
where $C(t) = t$, $0 \le t \le 1$. The function $C(t)$ is continuously differentiable and $tC'(t) = t$ strictly increases. \cref{thm:primal_dual} implies  that any probability measure $\mu$ with projections $\Pr_X(\mu) = \Pr_Y(\mu) = \Pr_Z(\mu) = \lambda$ and $\supp(\mu) \subset M$ is the primal solution to the related problem; in particular the probability measure from \cref{thm:osexists} is the primal solution. Also, we can construct explicitly the dual solution in this case.

Consider the following functions:
\begin{align*}
&f_1(x) = c \ln l - {1 \over 3}(c\ln c - c) + {1 \over 6}((2x - 1)^3 - (2l - 1)^3),\\ 
&f_2(x) = c\ln x - {1 \over 3}(c\ln c - c),\\ 
&f_3(x) = c\ln r - {1 \over 3}(c\ln c - c) + {1 \over 4}(x^2 - r^2) - {1 \over 6}(x^3 - r^3).
\end{align*}

These functions satisfy the following identities:
$$f_1(l) = f_2(l),$$
$$f_2(r) = f_3(r),$$
$$f_1'(l) = f_2'(l),$$
$$f_2'(r) = f_3'(r).$$ 

The first and the second equality are easy to check directly. For the third and the fourth compute $f_1'(x) = (2x - 1)^2$, $f_2'(x) = \frac{c}{x}$, $f_3'(x) = \frac{1}{2}(x - x^2)$.
$f_1'(l) = (2l - 1)^2 = r^2 = \frac{c}{l} = f_2'(l)$, $f_2'(r) = lr = \frac{1}{2}r(1 - r) = f_3'(r)$.

Define:
\begin{align*}
f(x) = g(x) = h(x) = \begin{cases}
f_1(x) \text{, if } 0 \le x \le l,\\
f_2(x) \text{, if } l \le x \le r,\\
f_3(x) \text{, if } r \le x \le 1,
\end{cases}
\end{align*}

It follows immediately from the properties checked above of the functions $f_1, f_2, f_3$ that $f$ is continuous and continuously differentiable on $[0, 1]$ and $f'(x) = \lambda(x)$.

\begin{proposition}\label{prop:explicit_dual_xyz}
The triple of functions $(f, g, h)$ defined above is a dual solution of related problem for the cost function $c(x, y, z) = xyz$.
\end{proposition}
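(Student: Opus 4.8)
The plan is to deduce \cref{prop:explicit_dual_xyz} from \cref{thm:primal_dual} by recognising the explicit triple $(f,g,h)$ as the generic dual solution of that theorem for the particular choice $C(t)=t$. First I would specialise \cref{thm:primal_dual}: since $C'(t)\equiv 1$, the function there is $\hat f(s)=\int_0^s\lambda(t)\,dt$, with $\hat f(0)=0$ and $\hat f{}'(s)=\lambda(s)$; in particular $\hat f\in C^1[0,1]$. The excerpt already checks that the piecewise function $f=(f_1,f_2,f_3)$ is $C^1$ on $[0,1]$ with $f'(x)=\lambda(x)$ (this is exactly where the matching conditions $f_1'(l)=f_2'(l)$ and $f_2'(r)=f_3'(r)$ are used). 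Hence $f-\hat f$ is a $C^1$ function with identically vanishing derivative, so $f\equiv\hat f+f(0)$, and likewise $g\equiv h\equiv\hat f+f(0)$.

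It then remains to verify that the common shift $f(0)$ satisfies the normalisation required in \cref{thm:primal_dual}, namely $C_x+C_y+C_z=C(0)-2\int_0^1\lambda(t)\,dt$ with $C_x=C_y=C_z=f(0)$. Since $C(0)=0$ and $\int_0^1\lambda(t)\,dt=\hat f(1)=f(1)-f(0)$, this reduces to the single scalar identity
$$3f(0)=-2\bigl(f(1)-f(0)\bigr)\qquad\Longleftrightarrow\qquad f(0)+2f(1)=0.$$
I would check this directly from the formulas for $f_1$ and $f_3$: evaluate $f(0)=f_1(0)$ and $f(1)=f_3(1)$, use $c=lr^2$ so that $c\ln c=c\ln l+2c\ln r$ and the logarithmic terms cancel, and then substitute $2l-1=-r$ and $l=\frac12(1-r)$; the polynomial remainder collapses to $c+\frac12 r^2(r-1)=lr^2+\frac12 r^2(r-1)=0$. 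Note this uses only the definitional relations $r=1-2l$ and $c=lr^2$, not the transcendental equation determining $l$.

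With $f\equiv\hat f+f(0)$ and this normalisation in hand, \cref{thm:primal_dual} immediately yields $f(x)+g(y)+h(z)\le xyz$ with equality on $M$, and then \cref{lem:slackness_conditions} --- using the $(3,1)$-stochastic measure on $M$ provided by \cref{thm:osexists} --- certifies $(f,g,h)$ as a dual solution. I expect the only delicate point to be the constant bookkeeping in $f(0)+2f(1)=0$: one must be careful that $\hat f$ is anchored at $s=0$ whereas the explicit $f$ is stitched together from pieces calibrated at $l$ and $r$, so the additive constants appearing in $f_1$ and $f_3$ (the terms $c\ln l-\frac13(c\ln c-c)$ and $c\ln r-\frac13(c\ln c-c)$, together with the cubic corrections at $x=0$ and $x=1$) must be tracked exactly; the substitutions $2l-1=-r$ and $c=\frac12(1-r)r^2$ make the cancellation transparent. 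Everything else is an appeal to results already established.
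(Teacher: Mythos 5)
Your proposal is correct and follows essentially the same route as the paper: both recognise $f=\hat f + f(0)$ from $f'(x)=\lambda(x)$, reduce the normalisation of \cref{thm:primal_dual} to the scalar identity $f(0)+2f(1)=0$, and verify it by cancelling the logarithmic terms via $c=lr^2$ and collapsing the polynomial remainder with $2l-1=-r$ and $l=\tfrac12(1-r)$. The only cosmetic difference is that the paper evaluates $f_1(0)$ and $f_3(1)$ and sums directly rather than phrasing the reduction through the relation $\int_0^1\lambda=\hat f(1)=f(1)-f(0)$.
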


\begin{proof}
Since $f'(x) = \lambda(x)$ it follows that 
$$
f(x) = g(x) = h(x) = \int_0^x\lambda(x)~dx + C_f = \int_0^x\lambda(x)C'(x\lambda(x))~dx + C_f
$$
for some constant $C_f$. By \cref{thm:primal_dual} it is enough to check that $f(0) + f(1) + f(1) = c(0, 1, 1) = 0$.
\begin{align*}
    f(0) = f_1(0) &= c\ln l - {1 \over 3}(c\ln c - c) - {1 \over 6}(2l - 1)^3 - {1 \over 6},\\
    f(1) = f_3(1) &= c\ln r - {1 \over 3}(c\ln c - c) - {1 \over 4}r^2 + {1 \over 6}r^3 + {1 \over 12},\\
    f(0) + 2f(1) &= c\ln(lr^2) - (c\ln c - c) + 2 \cdot{1 \over 12} - {1 \over 6} - {1 \over 2}r^2
    + {1 \over 3}r^3 - {1 \over 6}(2l - 1)^3\\
    &= c - {1 \over 2}r^2 + {1 \over 2}r^3 = c - {1 - r \over 2}r^2 = c - lr^2 = 0.
\end{align*}
So the triple $(f, g, h)$ is the dual solution for the cost function $c(x, y, z)=xyz$.
\end{proof}

\section{Uniqueness of the dual solution}\label{section:uniqueness}
\begin{theorem}\label{thm:uniqueness}
Suppose that $c(x, y, z) = C(xyz)$ for some continuously differentiable function $C:[0, 1] \to \mathbb{R}$ and the function $tC'(t)$
strictly increases on the segment $[0, 1]$. Then the triple $(f, g, h)$ is a dual solution if and only if there exist constants $C_f$, $C_g$, $C_h$ such that 
 $$
C_f + C_g + C_h = C(0) - 2\int_{0}^{1}\lambda(t)C'(t\lambda(t))~dt,
 $$
 and 
$$f(x) \le \int_{0}^{x}\lambda(t)C'(t\lambda(t))~dt + C_f,$$
 $$ g(y) \le \int_{0}^{y}\lambda(t)C'(t\lambda(t))~dt + C_g,$$
$$h(z) \le \int_{0}^{z}\lambda(t)C'(t\lambda(t))~dt + C_h.$$
where equality is achieved almost everywhere.
\end{theorem}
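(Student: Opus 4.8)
The plan is to establish both implications of the stated equivalence. For the ``if'' direction, suppose $f\le\hat f+C_f$, $g\le\hat f+C_g$, $h\le\hat f+C_h$ with equality $\lambda$-almost everywhere, where $\hat f(s)=\int_0^s\lambda(t)C'(t\lambda(t))\,dt$, and $C_f+C_g+C_h=C(0)-2\int_0^1\lambda(t)C'(t\lambda(t))\,dt$. Then $f(x)+g(y)+h(z)\le(\hat f(x)+C_f)+(\hat f(y)+C_g)+(\hat f(z)+C_h)$, and by \cref{thm:primal_dual} (applied with $C_x=C_f$, $C_y=C_g$, $C_z=C_h$) the right-hand side is $\le c(x,y,z)$, so $(f,g,h)$ is dual feasible. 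Since $f=\hat f+C_f$ and similarly for $g,h$ almost everywhere, the dual functional at $(f,g,h)$ equals $3\int_0^1\hat f\,d\lambda+(C_f+C_g+C_h)$, which by \cref{thm:primal_dual} is the optimal dual value; hence $(f,g,h)$ is a dual solution.

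For the ``only if'' direction, let $(f,g,h)$ be any dual solution and let $\mu$ be the primal solution from \cref{thm:osexists}: it is supported on $M$, its three axis projections equal the Lebesgue measure $\lambda$ on $[0,1]$, and $\int c\,d\mu$ equals the common optimal value. Since $\int(f(x)+g(y)+h(z))\,d\mu=\int_0^1 f\,d\lambda+\int_0^1 g\,d\lambda+\int_0^1 h\,d\lambda$ also equals that value while $c(x,y,z)-f(x)-g(y)-h(z)\ge 0$ everywhere, complementary slackness forces $f(x)+g(y)+h(z)=c(x,y,z)$ for $\mu$-a.e.\ $(x,y,z)$; fix a Borel set $M'\subseteq M$ of full $\mu$-measure on which this holds. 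Projecting $M'$ onto the first axis yields a set $G_f\subseteq[0,1]$ of full $\lambda$-measure such that every $x_0\in G_f$ occurs in a triple $(x_0,y_0,z_0)\in M$ realizing the equality; by \cref{prop:description_M} one has $y_0z_0=\lambda(x_0)$, so $c(x_0,y_0,z_0)=C(x_0\lambda(x_0))$, and freezing $(y_0,z_0)$ in the feasibility inequality and subtracting the equality at $x_0$ gives
\[
f(x)-f(x_0)\le C\big(x\lambda(x_0)\big)-C\big(x_0\lambda(x_0)\big)\qquad\text{for all }x\in[0,1],\ \text{all }x_0\in G_f.
\]
This is the analogue of the bound used in \cref{thm:weak_uniqueness}, but with the base point restricted to a full-measure set. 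Symmetrically one builds full-measure sets $G_g,G_h$ and the corresponding inequalities for $g$ and $h$.

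The next step upgrades this family of one-sided inequalities to the stated representation. Using the displayed bound once with base point $x_0\in G_f$ and once with base point $x_1\in G_f$ (each time with the free variable set to the other point), one sandwiches, for $x_0<x_1$ in $G_f$,
\[
C\big(x_1\lambda(x_1)\big)-C\big(x_0\lambda(x_1)\big)\le f(x_1)-f(x_0)\le C\big(x_1\lambda(x_0)\big)-C\big(x_0\lambda(x_0)\big).
\]
Telescoping over a partition of an interval $[a,b]$ by points of $G_f$ (possible since $G_f$ is dense) and applying the mean value theorem to each summand, both bounding sums are Riemann sums of the continuous function $t\mapsto\lambda(t)C'(t\lambda(t))$ and converge, as the mesh tends to $0$, to $\int_a^b\lambda(t)C'(t\lambda(t))\,dt=\hat f(b)-\hat f(a)$. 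Hence $f-\hat f$ is constant on $G_f$, say equal to $C_f$; likewise $g=\hat f+C_g$ on $G_g$ and $h=\hat f+C_h$ on $G_h$, which is the claimed equality $\lambda$-a.e. To obtain $f\le\hat f+C_f$ \emph{everywhere}, substitute $f(x_0)=\hat f(x_0)+C_f$ into the one-sided bound and let $x_0\to x$ through $G_f$: by continuity of $\hat f$, $\lambda$ and $C$ the right-hand side tends to $\hat f(x)+C_f$, while the left-hand side does not depend on $x_0$. Finally, equating the dual functional $3\int_0^1\hat f\,d\lambda+(C_f+C_g+C_h)$ with the optimal dual value computed in \cref{thm:primal_dual} gives $C_f+C_g+C_h=C(0)-2\int_0^1\lambda(t)C'(t\lambda(t))\,dt$.

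I expect the main obstacle to be exactly this upgrading step: turning a one-sided inequality whose base point ranges only over a full-measure set into genuine control of $f$, and in particular verifying that the telescoped Riemann sums converge to $\hat f(b)-\hat f(a)$ in spite of the corners of $\lambda$ at $l$ and $r$ — continuity of $\lambda$ and of $C'$ is what keeps these sums well behaved. The accompanying measure-theoretic points (measurability of the projected sets $G_f,G_g,G_h$, and replacing the $\mu$-a.e.\ equality set by a Borel set $M'$ of full measure) are routine and I would only remark on them.
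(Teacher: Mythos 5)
Your ``if'' direction is essentially the paper's. The ``only if'' direction, however, takes a genuinely different route from the paper's, and it appears to be correct.

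The paper's strategy is to pre-regularize: it replaces $(f,g,h)$ by the iterated $c$-conjugates $(\widetilde f,\widetilde g,\widetilde h)$, which agree with $(f,g,h)$ almost everywhere but are automatically Lipschitz (as infima of a uniformly Lipschitz family). Continuity is the crucial gain: the set where $\widetilde f+\widetilde g+\widetilde h=c$ is then closed, so $\mu$-a.e.\ equality upgrades to equality on all of $\supp\mu$; after that the paper simply invokes \cref{thm:weak_uniqueness}, which already proved that equality on $M$ forces $\widetilde f'=\lambda(\cdot)C'(\cdot\lambda(\cdot))$ by a two-sided squeeze argument. You skip the regularization entirely and work with the original $(f,g,h)$ on a full-measure projection $G_f$; the price is that the two-sided squeeze of \cref{thm:weak_uniqueness} (a pointwise differentiation argument at a fixed $x_0$) has to be replaced by a telescoping Riemann-sum argument, because the base point $x_0$ is only available inside $G_f$ and you cannot differentiate pointwise. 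What you gain is that you never need to assert $\supp\mu=M$ (you only use a $\mu$-full subset $M'$ of $M$), and you avoid the Legendre-transform machinery. What the paper's route buys is a cleaner reuse of an already-proved lemma and no measure-theoretic bookkeeping about projections of null-complements. Both routes rely equally on continuity of $\lambda$ and $C'$ for the limiting step — the paper through the mean-value theorem inside \cref{thm:weak_uniqueness}, you through uniform convergence of the Riemann sums despite the corners of $\lambda$ at $l$ and $r$.

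Two small points worth tightening in a final write-up: (1) the set $M'$ should be taken Borel and the projection $\Pr_x(M')$ is only analytic, so you should pass to a Borel $G_f\subseteq\Pr_x(M')$ with $\lambda(G_f)=1$ (as you flagged); (2) in the limit $x_0\to x$ through $G_f$ you should make explicit that you are using $\hat f(x_0)\to\hat f(x)$, $\lambda(x_0)\to\lambda(x)$, and continuity of $C$, so that both $C(x\lambda(x_0))$ and $C(x_0\lambda(x_0))$ converge to $C(x\lambda(x))$ and their difference vanishes. Neither affects correctness.
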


\begin{proof}
$\Leftarrow$ Suppose that $\widetilde{f}(x) = \int_{0}^{x}\lambda(t)C'(t\lambda(t))~dt + C_f$, $\widetilde{g}(y) = \int_{0}^{y}\lambda(t)C'(t\lambda(t))~dt + C_g$ and $\widetilde{h}(z) = \int_{0}^{z}t\lambda(t)C'(t\lambda(t))~dt + C_h$. Then the triple $(\widetilde{f}, \widetilde{g}, \widetilde{h})$ is the dual solution by \cref{thm:primal_dual}. Also $f(x) + g(y) + h(z) \le \widetilde{f} + \widetilde{g} + \widetilde{h} \le c(x, y, z)$ and $\int_0^1f(x) + g(x) + h(x)~dx = \int_0^1\widetilde{f} + \widetilde{g} + \widetilde{h}~dx$ so the triple $(f, g, h)$ is the dual solution.

$\Rightarrow$ For any dual solution $(f, g, h)$ there exists a triple $(\widetilde{f}, \widetilde{g}, \widetilde{h})$ such that $f \le \widetilde{f}$, $g \le \widetilde{g}$, $h \le \widetilde{h}$ and $\widetilde{f}(x) = \inf_{y, z}(c(x, y, z) - \widetilde{g}(y) - \widetilde{h}(z))$, $\widetilde{g}(y) = \inf_{x, z}(c(x, y, z) - \widetilde{f}(x) - \widetilde{h}(z))$, $\widetilde{h}(z) = \inf_{x, y}(c(x, y, z) - \widetilde{f}(x) - \widetilde{g}(y))$. One can prove this by applying the Legendre transformation subsequently to $f$, $g$, $h$.

For any $x$, $y$, $z$  inequality $\widetilde{f}(x) + \widetilde{g}(y) + \widetilde{h}(z) \le c(x, y, z)$ holds since $\widetilde{f}(x) = \inf_{y, z}(c(x, y, z) - \widetilde{g}(y) - \widetilde{h}(z))$. Also $$
\int_0^1\widetilde{f}(x)~dx + \int_0^1\widetilde{g}(y)~dy + \int_0^1\widetilde{h}(z)~dz \ge \int_0^1f(x)~dx + \int_0^1 g(y)~dy + \int_0^1 h(z)~dz$$ since $f \le \widetilde{f}$, $g \le \widetilde{g}$ and $h \le \widetilde{h}$. This means that the triple $(\widetilde{f}, \widetilde{g}, \widetilde{h})$ is a dual solution and $\widetilde{f} = f$, $\widetilde{g} = g$, $\widetilde{h} = h$ almost everywhere.

A function $F[y, z]: [0, 1] \to \mathbb{R}$, $F[y, z](x) = c(x, y, z) - \widetilde{g}(y) - \widetilde{h}(z)$ is a Lipschitz continuous function since ${\partial \over \partial x}c(x, y, z)$ is a well-defined continuous function on the cube $[0, 1]^3$. This means that $\widetilde{f}(x)$ is a Lipschitz continuous function since $\widetilde{f}$ is an infimum of the family of Lipschitz continuous functions $F[y, z]$ with common constant $\max_{x, y, z}{\partial \over \partial x}c(x, y, z)$. In particular this means that $\widetilde{f}$ is continuous on the segment $[0, 1]$. Similarly, the functions $\widetilde{g}$ and $\widetilde{h}$ are continuous.

For any primal solution $\mu$ equality $\widetilde{f}(x) + \widetilde{g}(y) + \widetilde{h}(z) = c(x, y, z)$ holds $\mu$-almost everywhere. The set of equality points is closed, because $f$, $g$ and $h$ are continuous. This means that $\widetilde{f}(x) + \widetilde{g}(y) + \widetilde{h}(z) = c(x, y, z)$ on the support of $\mu$. For the primal solution $\mu$ from \cref{section:primal_construction} $\supp(\mu) = M$. So the equality $\widetilde{f}(x) + \widetilde{g}(y) + \widetilde{h}(z) = c(x, y, z)$ holds on the set $M$.

By \cref{thm:weak_uniqueness} the functions $\widetilde{f}$, $\widetilde{g}$ and $\widetilde{h}$ are continuously differentiable and $\widetilde{f}'(x) = \lambda(x)C'(x\lambda(x))$, $\widetilde{g}'(y) = \lambda(y)C'(y\lambda(y))$, $\widetilde{h}'(z) = \lambda(z)C'(z\lambda(z))$. This means that $\widetilde{f}(x) = \hat{f}(x) + C_f$, $\widetilde{g}(y) = \hat{f}(y) + C_g$ and $\widetilde{h}(z) = \hat{f}(z) + C_h$ for some constants $C_f$, $C_g$ and $C_h$. Since $(0, 1, 1) \in M$ the equality holds $C_f + C_g + C_h = c(0, 1, 1) - \hat{f}(0) - \hat{f}(1) - \hat{f}(1) = C(0) - 2\int_{0}^{1}\lambda(t)C'(t\lambda(t))~dt$.
\end{proof}

\section{A priori estimates for the dimension}\label{section:inertion}
Following \cite{brendan_pass} let us introduce the following sets of matrices

$$g_{\{x\}} = g_{\{y, z\}} = 
\begin{pmatrix}
0 & z & y \\
z & 0 & 0 \\
y & 0 & 0
\end{pmatrix},
$$
$$g_{\{y\}} = g_{\{x, z\}} = 
\begin{pmatrix}
0 & z & 0 \\
z & 0 & x \\
0 & x & 0
\end{pmatrix},
$$
$$g_{\{z\}} = g_{\{x, y\}} = 
\begin{pmatrix}
0 & 0 & y \\
0 & 0 & x \\
y & x & 0
\end{pmatrix}.
$$

Further, $G$ is a linear combination of $g_p$ with nonnegative coefficients :

$$ G = \left\{ \left.
\begin{pmatrix}
0 & (\alpha + \beta)z & (\alpha + \gamma)y \\
(\alpha + \beta)z & 0 & (\beta + \gamma)x \\
(\alpha + \gamma)y & (\beta + \gamma)x & 0
\end{pmatrix}
\right| \alpha, \beta, \gamma \ge 0
\right\}.
$$

By Theorem 2.1.2 from \cite{brendan_pass} the supports of solutions to the primal problem are locally contained inside a manifold of dimension 
$$ d = 3 - \text{positive index of inertia of $g$}$$
for any $g \in G$. This index is computed below.

\begin{proposition}
The quadratic form given by 
$$ g = 
\begin{pmatrix}
0 & a & b \\
a & 0 & c \\
b & c & 0
\end{pmatrix}
$$
with non-negative $a$, $b$ and $c$ has positive index of inertia at most $1$.
\end{proposition}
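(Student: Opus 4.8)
The plan is to read off the signs of the eigenvalues of $g$ directly from the coefficients of its characteristic polynomial, that is, from $\operatorname{tr} g$ and $\det g$. Since $g$ is a real symmetric matrix, it has three real eigenvalues $\lambda_1 \ge \lambda_2 \ge \lambda_3$ (counted with multiplicity), and the positive index of inertia of the associated quadratic form is precisely the number of indices $i$ with $\lambda_i > 0$. I would argue by contradiction, assuming that at least two of the eigenvalues are strictly positive.

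The two ingredients needed are immediate. First, $\operatorname{tr} g = 0$, so $\lambda_1 + \lambda_2 + \lambda_3 = 0$. Second, expanding the determinant along the first row gives $\det g = 2abc$, hence $\lambda_1 \lambda_2 \lambda_3 = 2abc \ge 0$, where the nonnegativity uses the hypothesis $a, b, c \ge 0$.

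Now suppose, for contradiction, that $\lambda_1 > 0$ and $\lambda_2 > 0$. From the vanishing of the trace we get $\lambda_3 = -(\lambda_1 + \lambda_2) < 0$, and therefore $\lambda_1 \lambda_2 \lambda_3 < 0$, which contradicts $\lambda_1 \lambda_2 \lambda_3 = 2abc \ge 0$. Hence at most one eigenvalue of $g$ is positive, i.e. the positive index of inertia is at most $1$, as claimed.

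There is essentially no obstacle here: the argument is a two-line computation. The only point worth flagging is that no eigenvalue computation and no case analysis on which of $a$, $b$, $c$ vanish is required — the vanishing of $\operatorname{tr} g$ rules out three positive eigenvalues outright, and the sign of $\det g$ rules out exactly two, so the conclusion follows uniformly. (If one prefers, the same reasoning shows that when $g \neq 0$ the form is nondegenerate of signature $(1,2)$ whenever $abc > 0$, so the bound is sharp in the relevant regime.)
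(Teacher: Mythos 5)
Your proof is correct, and it takes a genuinely different route from the paper. The paper argues via principal leading minors in the spirit of the Jacobi/Sylvester sign-change rule, and because this requires nonzero pivots it is forced into a case distinction: when $a,b,c>0$ it reads off the negative index from the sign pattern $\Delta_0=1$, $\Delta_1=0$, $\Delta_2=-a^2<0$, $\Delta_3=2abc>0$ (with the zero pivot handled implicitly), and when some coefficient vanishes, say $c=0$, it instead exhibits the form explicitly as a difference of two squares, $2axy+2bxz=\tfrac12(x+ay+bz)^2-\tfrac12(x-ay-bz)^2$. Your argument avoids both the case split and the delicacy of a zero intermediate minor: the identities $\operatorname{tr}g=0$ (so the three real eigenvalues sum to zero) and $\det g=2abc\ge 0$ (so their product is nonnegative) immediately rule out two or three positive eigenvalues. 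This is cleaner and more uniform; what the paper's route buys is only an explicit exhibition of the form as a difference of squares in the degenerate case, which is not needed for the stated bound. Your closing remark on sharpness (signature $(1,2)$ when $abc>0$) is also correct.
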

\begin{proof}
Consider two cases. 
\textbf{First case.} Let $a, b, c > 0$. Then principal upper left minors are $\Delta_0 = 1$, $\Delta_1 = 0$, $\Delta_2 = -a^2 < 0$ and $\Delta_3 = 2abc > 0$. So number of sign changes in sequence of principal upper left minors is $2$ and negative index of inertia is $2$. This means that the positive index of inertia is at most $1$.
\textbf{Second case.} Without loss of generality $c = 0$. Then $g$ has the form $2a xy + 2b xz = \frac{1}{2}(x + (ay + bz))^2 - \frac{1}{2}(x - (ay + bz))^2$. Thus the positive index of inertia is at most $1$.
\end{proof}
We see that the local dimension of our solution is indeed not bigger than $2$, but unfortunately this bound does not help to determine the local dimension of our solution without solving problem explicitly.

\section{Extreme points}

We show in this section that the extreme points of the primal solutions are singular to the surface (Hausdorff)
measure on $M$. Applying logarithmic transformation from the proof of \cref{thm:osexists}
and noticing that this is a (locally) bi-Lipschitz transformation
one can easily verify that it is sufficient to prove the claim for the triangle $\Delta$.
Further, projecting $\Delta$ onto the $xy$-hyperplane we reduce the proof of the statement to the proof
of the following fact:

\begin{theorem}
Let $\mu_x, \mu_y$, and $\mu_{x+y}$ be one-dimensional probability measures   on
the axes $x, y$ and on the line $l_{x+y} = \{(x,y) \in \mathbb{R}^2 \colon x=y\}$
respectively. We assume that $\mu_x, \mu_y$ and $\mu_{x+y}$ are compactly supported. Let $\Pi$ be the set of probability measures with projections
$$
\mu_x  = {\rm Pr}_x(\pi), \ \mu_y  = {\rm Pr}_y(\pi), \ \mu_{x+y}  = {\rm Pr}_{x+y}(\pi),
$$
where  ${\rm Pr}_x$, ${\rm Pr}_y$ are projection onto $x, y$, and ${\rm Pr}_{x+y}$
is the projection onto $l_{x+y}$: ${\rm Pr}_{x+y}(x,y) = x + y$.

Assume that $\Pi$ is nonempty and $\pi \in \Pi$ is an extreme point. Then 
there exists a set $S$ of Lebesgue measure zero such that $\pi(S)=1$.
\end{theorem}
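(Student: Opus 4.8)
The plan is to show that an extreme point $\pi$ of $\Pi$ cannot have any ``two-dimensional spreading'': if $\pi$ gave positive mass to a set of positive Lebesgue measure in the plane, we could perturb $\pi$ inside $\Pi$, contradicting extremality. The key point is that the three projection constraints (onto $x$, onto $y$, onto $x+y$) are only three one-dimensional constraints, so the fibers of the constraint map are large, and a genuinely two-dimensional piece of $\pi$ leaves room to move.

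First I would set up the perturbation device. Suppose, for contradiction, that there is no Lebesgue-null set carrying $\pi$; equivalently, writing $\pi = \pi_{ac} + \pi_s$ with respect to planar Lebesgue measure, $\pi_{ac}\neq 0$. On the set where $\pi$ has a density $\rho>0$ I would look for a bounded measurable function $\psi(x,y)$, not identically zero, supported in a region where $\rho$ is bounded below, such that the signed measure $\psi\,dx\,dy$ has vanishing projections onto all three lines:
$$
\int \psi(x,y)\,dy = 0 \ \ \forall x,\qquad \int \psi(x,y)\,dx = 0 \ \ \forall y,\qquad \int_{x+y=s}\psi = 0 \ \ \forall s.
$$
If such a $\psi$ exists, then for small $\eps>0$ both $\pi \pm \eps\,\psi\,dx\,dy$ lie in $\Pi$ (nonnegativity is guaranteed by the support and boundedness conditions, the marginals are unchanged), so $\pi$ is the midpoint of two distinct elements of $\Pi$ — contradicting that $\pi$ is extreme. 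Hence the whole task reduces to \emph{producing} such a $\psi$ whenever $\pi_{ac}\neq 0$.

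To build $\psi$ I would argue that a set $A$ of positive planar Lebesgue measure on which $\rho$ is bounded above and below must contain a ``combinatorial rectangle'' configuration adapted to the three directions: points whose pairwise differences in the $x$, $y$, and $x+y$ coordinates can be cancelled. Concretely, by a density/Lebesgue-point argument one can find inside $A$ four small disjoint cells arranged so that an alternating $\pm1$ combination of (appropriately normalized) indicator functions of these cells has all three projections equal to zero — this is the planar analogue of the fact that a $2\times 2$ pattern with alternating signs has zero row sums and zero column sums, here enhanced by the third (diagonal) direction, which forces using a slightly larger but still finite configuration (e.g. a hexagonal six-cell pattern, or two nested rectangles) whose alternating combination annihilates $x$-slices, $y$-slices, and $x+y$-slices simultaneously. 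Taking $\psi$ to be this finite linear combination of indicators, scaled down so that $\eps|\psi|\le \rho$ on its support, completes the construction.

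The main obstacle is the existence of the finite cancelling configuration in the presence of \emph{three} linear constraints rather than two. For two directions ($x$ and $y$) a single axis-parallel rectangle with alternating signs works trivially; the $x+y$ constraint is the real content, and one must verify that a finite family of translates (in general position) of a fixed small cell admits a nonzero alternating combination killing all three families of slices — essentially a linear-algebra count showing that the number of cells can be chosen to exceed the number of independent constraints they impose. I would handle this by choosing the cells at vertices of a suitable lattice configuration so that each $x$-line, each $y$-line, and each $x+y$-line meets the union of cells in an even number of cells with the signs balanced; a direct check on, say, a configuration of cells centered at the six points $\{(0,0),(1,0),(0,1),(1,1),(2,1),(1,2)\}$ (scaled to be tiny) with signs $+,-,-,+$ on a core rectangle plus a compensating pair does the job, and then one only needs the Lebesgue-density lemma to embed such a configuration inside $A$. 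Once this combinatorial lemma is in place, the extremality contradiction is immediate and the theorem follows.
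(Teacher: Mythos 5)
Your high-level plan — find a finite ``alternating'' configuration whose projections onto $x$, onto $y$, and onto $x+y$ all cancel, and use it to perturb $\pi$ inside $\Pi$, contradicting extremality — is exactly the idea behind the paper's proof, and the six-point hexagonal pattern you intuit is the right object. The paper takes
$$
\Gamma_+ = \{(x_1,y_2),(x_2,y_3),(x_3,y_1)\},\qquad \Gamma_- = \{(x_1,y_3),(x_2,y_1),(x_3,y_2)\},
$$
with $x_1<x_2<x_3$, $y_1<y_2<y_3$, and $x_i+y_j$ matched pairwise; weights $+1$ on $\Gamma_+$ and $-1$ on $\Gamma_-$ give zero projections onto all three lines. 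Your explicit configuration $\{(0,0),(1,0),(0,1),(1,1),(2,1),(1,2)\}$ with ``$+,-,-,+$ on a core rectangle plus a compensating pair'' is not correct: the cell at $(2,1)$ is the \emph{only} one on the line $x=2$, so its weight must be zero for the $x$-projection to cancel; by symmetry the cell at $(1,2)$ must also vanish, and then the remaining unit square cannot cancel the $x+y$-slices. The fix is the genuine $3{+}/3{-}$ hexagon above, which you clearly had in mind.

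The substantive gap is in the step ``one only needs the Lebesgue-density lemma to embed such a configuration inside $A$.'' A Lebesgue density point of $A=\{\rho>m\}$ gives that most of a small ball $B_r$ lies in $A$, i.e. $|A\cap B_r|>(1-\eps)|B_r|$; it does \emph{not} give six specified axis-aligned cells entirely contained in $A$. A set of positive planar measure need not contain any small square, so $\psi$ built from indicator cells cannot simply be supported in $A$, and if you truncate $\psi$ to $A$ the three projections no longer vanish exactly — and ``approximately zero'' projections are useless for the extremality argument, which is rigid. Making this work directly amounts to producing a positive-measure family of hexagonal configurations entirely inside $A$ and integrating them into a perturbation measure with bounded density below $\rho$; this can likely be done but takes real work (a Steinhaus/Fubini-type argument over the $4$-parameter family of configurations, plus a density bound on the associated pushforwards). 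The paper instead sidesteps all of this via Kellerer's duality theorem applied to the configuration set $N\subset X^6$: either there is a measure $\gamma$ on $X^6$ with $\gamma(N)>0$ and all six coordinate-marginals $\le \pi$ (whence the averaged measures $\delta_\pm=\tfrac13\sum\mathrm{Pr}_i\,\tilde\gamma$ give a legitimate perturbation $\pi\pm(\delta_+-\delta_-)$), or there is a full-$\pi$-measure set $S$ avoiding all of $N$, which the density theorem then forces to be Lebesgue-null. That abstract dichotomy is the missing ingredient in your proposal; without it (or a worked-out substitute), the perturbation you want to build is not available from $\pi_{ac}\ne 0$ alone.
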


\begin{proof}
Without loss of generality let us assume that $\pi$ is supported by $X = [0,1]^2$.
Let us consider the set of tuples of $6$ points
\begin{align*}
N = \Bigl\{ &  \Bigl((x_1, y_2), (x_1, y_3), (x_2, y_1), (x_2, y_3), (x_3, y_2), (x_3, y_1) \Bigr) \colon \ x_1 < x_2 < x_3,
\\&   y_1 < y_2 < y_3, x_1 + y_2 = x_2 + y_1, x_1 + y_3 = x_3 + y_1, x_2 + y_3 = x_3 + y_2
\Bigr\} \subset X^6.
\end{align*}
For arbitrary  $\Gamma \in N$ let us set
$$
 \Gamma_+ = \{(x_1, y_2), (x_2, y_3), (x_3,y_1)\},  \  \Gamma_{-} = \{(x_1, y_3), (x_2, y_1), (x_3,y_2)\}.
$$
Note that $\Gamma = \Gamma_{-} \sqcup \Gamma_{+}$ and uniform distributions on the sets
$\Gamma_+$ and $\Gamma_-$ have the same projections onto the both axes and $l_{x+y}$.

Let us show that there exists a set $S \subset X$ with the properties: $\pi(S)=1$, $S$ does not contain any subset of $6$ points in $N$.
According to a Kellerer's result (see \cite{Kellerer}) the following alternative holds:

\begin{itemize}
\item There exists a measure $\gamma$ on $X^6$ with the property $\gamma(N)>0$, such that 
$\mathrm{Pr}_i \gamma \le \pi$, $1 \le i \le 6$. 
\item For $1 \le i \le 6$ there exists a set $N_i \subset [0,1]^2 = X$ with the property $\pi(N_i)=0$ and
$$
N \subset \cup_{i=1}^6 X \times \ldots \times N_i \times \ldots X. 
$$ 
\end{itemize}
In the second case 
$$
S=X \setminus\cup_{i=1}^6 N_i
$$
will be a desired set. We will prove it later.

First we prove that the first case is impossible. We can assume that $\pi(X^6 \backslash N) = 0$ and $\pi$ is still nonzero.

Suppose that $\Gamma = \Bigl((x_1, y_2), (x_1, y_3), (x_2, y_1), (x_2, y_3), (x_3, y_2), (x_3,y_1) \Bigr)$ is an arbitrary point of $N$ and $B_\Gamma \subset X^6$ is ball with a center at $\Gamma$ and a radius of $\eps < {1 \over 2}\min(x_2 - x_1, x_3 - x_2, y_2 - y_1, y_3 - y_2)$. Also suppose that $\tilde{\gamma} = \gamma|_{B_\Gamma}$ is a (possibly zero) measure on $X^6$ and $\gamma_i = \mathrm{Pr}_i \tilde{\gamma}$ are measures on $X$. If $\gamma(B_\Gamma) >0$ then full measure sets for $\gamma_i$ are pairwise disjoint. In this case measures $\delta_- = {1 \over 3}(\gamma_1 + \gamma_4 + \gamma_6)$ and $\delta_+ = {1 \over 3}(\gamma_2 + \gamma_3 + \gamma_5)$ are distinct and have the same projections onto the axes and diagonal $l_{x + y}$. 

\begin{lemma}
$\delta_- = {1 \over 3}(\gamma_1 + \gamma_4 + \gamma_6)$ and $\delta_+ = {1 \over 3}(\gamma_2 + \gamma_3 + \gamma_5)$ have the same projections onto the axes $x$, $y$ and the line $l_{x+y}$.
\end{lemma}
\begin{proof}
The functions $\Pr_x \circ \Pr_1$ and $\Pr_x \circ \Pr_2$, $\Pr_x \circ \Pr_3$ and $\Pr_x \circ \Pr_4$, $\Pr_x \circ \Pr_5$ and $\Pr_x \circ \Pr_6$,  coincide on $N$. So the images of $\pi$ under this projections coincide. That means $\Pr_x(\gamma_1) = \Pr_x(\gamma_2)$, $\Pr_x(\gamma_3) = \Pr_x(\gamma_4)$, $\Pr_x(\gamma_5) = \Pr_x(\gamma_6)$.

Analogously 
$\Pr_y(\gamma_1) = \Pr_y(\gamma_5)$, 
$\Pr_y(\gamma_2) = \Pr_y(\gamma_4)$, 
$\Pr_y(\gamma_3) = \Pr_y(\gamma_6)$ and 
$\Pr_{x+y}(\gamma_1) = \Pr_{x+y}(\gamma_3)$,
$\Pr_{x+y}(\gamma_2) = \Pr_{x+y}(\gamma_6)$,
$\Pr_{x+y}(\gamma_4) = \Pr_{x+y}(\gamma_5)$.
\end{proof}

Also $\delta_- \le \pi$ and $\delta_+ \le \pi$ since $\gamma_i =\mathrm{Pr}_i \tilde{\gamma} \le \mathrm{Pr}_i \gamma \le \pi$.

Hence $\pi_1 = \pi + \delta_+ - \delta_-$ and $\pi_2 = \pi - \delta_+ + \delta_-$ are nonnegative measures and have the same projections as $\pi$. So $\pi = {1 \over 2}(\pi_1 + \pi_2)$ is not an extreme point.

That means that for any $\Gamma \in N$ the measure of $B_\Gamma$ with respect to $\gamma$ is 0. Hence $\gamma(N) = 0$ which contradicts the assumption.

Thus we get that there exists a set $S$ with $\pi(S) = 1$ such that $S$ does not contain the sets of the type
\begin{align*}
\Bigl\{ &  (x_1, y_2), \ (x_1, y_3), \ (x_2, y_1), \ (x_2, y_3), \ (x_3, y_2), \ (x_3,y_1) , \ \ x_1 < x_2 < x_3, \ y_1 < y_2 < y_3,
\\&  x_1 + y_2 = x_2 + y_1, \ x_1 + y_3 = x_3 + y_1,\ x_2 + y_3 = x_3 + y_2
\Bigr\}.
\end{align*}

Let us show that $S$ has Lebesgue  measure zero. Assuming the contrary, let us apply the Lebesgue's density theorem.
According to this theorem for almost all  $(x, y) \in S$ and every $\varepsilon>0$ there exists a $r$-neighborhood $U$ of $(x, y)$
such that $\lambda(U \cap S) > (1 - \varepsilon) \lambda(U)$.

On the other hand, for all $\alpha$ and $\beta$ the tuple of points  
\begin{align*} \Bigl\{(x+\alpha, y+\beta), {}& (x+\alpha, y+\frac{r}{10}+\beta), (x+\frac{r}{10}+\alpha, y+\beta),
 (x+\frac{r}{10}+\alpha, y+\frac{2r}{10}+\beta), \\& (x+\frac{2r}{10}+\alpha, y+\frac{r}{10}+\beta), (x+\frac{2r}{10}+\alpha, y+\frac{2r}{10}+\beta)
\Bigr\}
\end{align*}
 belongs to $M$. Hence at least one of these points does not belong to  $S$.
 If  $0 \le \alpha, \beta \le \frac{r}{10}$, all these points belong to the  $r$-neighborhood of $(x, y)$,
 hence the measure of the set $U \setminus S)$ is at least  $\frac{r^2}{100}$.
	Choosing  $\varepsilon < \frac{1}{100\pi}$ one gets a contradiction with the Lebesgue's density theorem.
\end{proof}

\begin{remark}
\cref{conj:hausdorff} says that there exist extreme measures with Hausdorff dimension less than $2$.
Numerical experiments reveal certain empirical evidence of this. Nevetheless, we were not able to verify this conjecture. In general, it is not true that sets which do not contain given configurations of points have dimension strictly less than the ambient space (see \cite{Maga, Mathe}).
An example of a low-dimensional solution is given in \cite[Theorem 4.6]{dim-ger-nen}.
\end{remark}

\appendix\section{Discrete case}
Consider the following problem. 

\begin{problem}\label{prob:discrete_monge}
We are given three copies $A, B, C$ of the set $\{1, \ldots, n \}.$ Divide these $3n$ numbers
into $n$ groups of triples $(a, b, c)$, where $a \in A, b \in B, c \in C$.
We want to minimize the sum
$$
S(n) = \sum_{(a, b, c)} abc.
$$
Here the sum is taken over all the triples.
\end{problem}

The main result of this chapter is as follows:
\begin{theorem}\label{thm:discrete}
The minimum $F_D(n)$ of $S(n)$ over all partitions satisfies 
$$F_D(n) \sim C_P n^4,$$ where $C_P$ is the value of the integral in the primal problem.
\end{theorem}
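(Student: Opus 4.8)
The plan is to establish the two one-sided bounds $\liminf_{n\to\infty} F_D(n)/n^4\ge C_P$ and $\limsup_{n\to\infty} F_D(n)/n^4\le C_P$ separately, in both cases translating between the discrete partition problem and the continuous transportation problem on $[0,1]^3$ by identifying the index $a\in\{1,\dots,n\}$ with the interval $I_a=[(a-1)/n,a/n]$ and a triple $(a,b,c)$ with the cube $I_a\times I_b\times I_c$. The observation that makes the dictionary lossless up to lower order is that $xyz$ is multilinear, so its average over such a cube equals the product of the midpoints; hence a partition and any "thickened" measure built from it have almost equal cost, and we may freely use \cref{prop:same_integral} to identify $C_P$.

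\textbf{Lower bound.} Given an arbitrary partition into triples, I would define the measure $\mu_n=\sum_{(a,b,c)} n^2\,\mathbf 1_{I_a\times I_b\times I_c}\,dx\,dy\,dz$. Since every value occurs exactly once as a first coordinate (and similarly for the other two coordinates), $\mu_n$ is a probability measure whose three marginals are Lebesgue on $[0,1]$, i.e.\ $\mu_n$ is $(3,1)$-stochastic. By multilinearity $\int xyz\,d\mu_n=\frac1{n^4}\sum_{(a,b,c)}(a-\tfrac12)(b-\tfrac12)(c-\tfrac12)$, and expanding the product shows this differs from $\frac1{n^4}\sum abc=\frac1{n^4}S(n)$ by at most a universal constant times $1/n$ (the correction terms $\sum ab$, $\sum(a+b+c)$ are $O(n^3)$, uniformly over all partitions). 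As $\int xyz\,d\mu_n\ge C_P$ by definition of $C_P$ as the minimal transport cost, we obtain $S(n)/n^4\ge C_P-O(1/n)$ for every partition, hence $F_D(n)/n^4\ge C_P-O(1/n)$.

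\textbf{Upper bound.} Fix the primal solution $\pi$ supported on $M$ from \cref{thm:osexists}. For a fixed integer $N$, partition $[0,1]$ into $N$ equal intervals $J_1,\dots,J_N$ with midpoints $\bar x_i$, and form the block plan $\pi'=\sum_{i,j,k}w_{ijk}\,\mathrm{Unif}(J_i\times J_j\times J_k)$ with weights $w_{ijk}=\pi(J_i\times J_j\times J_k)$. One checks directly that $\pi'$ is again $(3,1)$-stochastic, and that $\int xyz\,d\pi'=\sum_{i,j,k} w_{ijk}\bar x_i\bar y_j\bar z_k$, which tends to $\int xyz\,d\pi=C_P$ as $N\to\infty$ since the oscillation of $xyz$ on a block is $\le 3/N$. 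To discretize $\pi'$, assume $N\mid n$ so each $J_i$ holds exactly $s:=n/N$ indices, put $W_{ijk}=n\,w_{ijk}$ (a nonnegative array all of whose plane sums equal $s$), round down to $\lfloor W_{ijk}\rfloor$, and greedily add back the at most $N^3$ missing units so as to restore every plane sum to $s$: at each stage the three residual-deficit vectors have positive and equal totals, so a legal block-triple $(i,j,k)$ to increment always exists. The resulting integer array $T_{ijk}$ has all plane sums $s$, total $\sum T_{ijk}=n$, and $\sum T_{ijk}\bar x_i\bar y_j\bar z_k\le\sum W_{ijk}\bar x_i\bar y_j\bar z_k+O_N(1)=n\,C_P+O(n/N)+O_N(1)$. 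I would then realize $T$ as an honest partition: distribute the $s$ indices of $J_i$ among the blocks $(j,k)$ according to $T_{i\bullet\bullet}$, do likewise for the other two coordinates, and inside each block $(i,j,k)$ match the selected indices into $T_{ijk}$ triples arbitrarily. Estimating each product $abc$ by $n^3\bar x_i\bar y_j\bar z_k$ up to $O(n^3/N)$ gives $S(n)\le n^4C_P+O(n^4/N)+O_N(n^3)$; sending $n\to\infty$ and then $N\to\infty$ yields $\limsup F_D(n)/n^4\le C_P$ (residual indices when $N\nmid n$ are harmless: dumping them into arbitrary triples costs $O_N(n^3)$).

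\textbf{Main obstacle.} The only genuinely non-formal step is the rounding $W\mapsto T$: in dimension three the plane-sum transportation polytope is not integral, so Birkhoff's theorem is unavailable. The remedy above — round down, then repair the plane sums by a Hall-type greedy argument whose feasibility rests on the deficits having equal totals in all three directions — incurs only an $O_N(1)$ error, which is $o(n)$ for fixed $N$. This is exactly why it is essential to first approximate $\pi$ by a block plan on a \emph{fixed}-size grid and only then discretize, rather than discretizing $\pi$ directly on the $n\times n\times n$ grid, where the analogous rounding error cannot be controlled at the required scale.
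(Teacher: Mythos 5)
Your argument is correct, and it takes a genuinely different route from the paper's. The paper works on the $n\times n\times n$ grid throughout: it first proves that the scaled minima $s_n$ converge (via superadditivity-type estimates $s_{n+k}\le(\tfrac{n}{n+k})^4 s_n+\tfrac{k}{n+k}$ and $s_{nk}\le s_n+\eps(n)$), and then, to pin the limit down to $C_P$, it smooths a primal solution onto the $n$-grid and invokes \emph{Dirichlet's theorem on simultaneous Diophantine approximation} to choose an $m$ for which the block masses $nm\,\rho_{ijk}$ are within $n^2\eps_1<1$ of integers $t_{ijk}$; choosing the error this small forces the three families of plane sums to be \emph{exactly} $m$, which is precisely the condition needed to realize $t_{ijk}$ as a partition of $\{1,\dots,nm\}$. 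You instead prove the two one-sided bounds directly, bypassing the convergence lemma, and replace the Dirichlet step by blocking on a \emph{fixed} $N$-grid, rounding down to $\lfloor W_{ijk}\rfloor$, and repairing the plane sums by the greedy Hall-type argument; the repair introduces only $O(N^3)=O_N(1)$ units, which is negligible against $n$. Your remark that one must block on a fixed grid before rounding (rather than rounding directly on the $n$-grid, where the floor error is uncontrollable) is exactly the issue the Dirichlet step in the paper is designed to finesse, and you have correctly diagnosed that three-dimensional plane-sum transportation polytopes are not integral, so Birkhoff is genuinely unavailable. A second, smaller departure: you exploit the multilinearity of $xyz$ so that the cube-average equals the midpoint value and the correction to $\sum abc$ becomes a closed-form $O(n^3)$ polynomial sum, whereas the paper only uses uniform continuity via the modulus $\eps(n)$; your observation is slightly sharper but the paper's version generalizes more readily to non-multilinear costs. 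Both the greedy-repair feasibility argument (the three deficit totals remain equal and nonnegative at every step, so a legal increment always exists) and the realization of the integer array as an honest partition (by distributing each interval's indices among blocks according to the plane marginals) are sound and match the corresponding steps in the paper.
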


\subsection{Connection with rearrangement inequality}

The rearrangement inequality can be formulated as follows:

\begin{theorem}[Rearrangement inequality]
Assume that $$x_1 \le x_2 \le \dots \le x_n,$$ $$y_1 \le y_2 \le \dots \le y_n$$ are two ordered sets of real numbers, $\sigma$ is a permutation (rearrangement) of $\{1, 2, \dots, n\}$. Then the following inequality holds:
$$
x_1y_1 + x_2y_2 + \dots + x_ny_n \ge x_1y_{\sigma(1)} + x_2y_{\sigma(2)} + \dots + x_ny_{\sigma(n)} \ge x_1y_n + x_2y_{n - 1} + \dots + x_ny_1.
$$

In other words, for the expression $x_1y_{\sigma(1)} + x_2y_{\sigma(2)} + \dots + x_ny_{\sigma(n)}$ the maximum is attained at the identity permutation $\sigma$, and the minimum is attained at the permutation $\begin{pmatrix}
1& 2& \dots& n\\
n& n-1& \dots& 1
\end{pmatrix}$.
\end{theorem}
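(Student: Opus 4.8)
The plan is to prove the upper bound first --- that the maximum of $x_1y_{\sigma(1)} + \dots + x_ny_{\sigma(n)}$ is attained at the identity permutation --- by a bubble-sort exchange argument, and then to deduce the lower bound by reflecting the second sequence.

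For the upper bound, the key observation is that swapping an adjacent inversion never decreases the sum. Suppose $\sigma$ is not the identity; then there is an index $i$ with $\sigma(i) > \sigma(i+1)$. Let $\sigma'$ be obtained from $\sigma$ by interchanging its values at $i$ and $i+1$. Expanding the two sums, all terms cancel except those indexed by $i$ and $i+1$, and one gets
\begin{align*}
\sum_{k} x_k y_{\sigma'(k)} - \sum_{k} x_k y_{\sigma(k)}
&= x_i y_{\sigma(i+1)} + x_{i+1} y_{\sigma(i)} - x_i y_{\sigma(i)} - x_{i+1} y_{\sigma(i+1)} \\
&= (x_{i+1} - x_i)\bigl(y_{\sigma(i)} - y_{\sigma(i+1)}\bigr) \ge 0,
\end{align*}
since $x_i \le x_{i+1}$ and, because $\sigma(i+1) < \sigma(i)$, $y_{\sigma(i+1)} \le y_{\sigma(i)}$. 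Moreover an adjacent transposition changes the number of inversions of a permutation by exactly $\pm 1$, and here it removes the inversion $(i,i+1)$, so $\sigma'$ has strictly fewer inversions than $\sigma$. Iterating, after finitely many such steps we reach the identity permutation along a chain on which the sum never decreased; hence $\sum_k x_k y_{\sigma(k)} \le \sum_k x_k y_k$ for every $\sigma$.

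For the lower bound, apply the inequality just proved to the sequence $x_1 \le \dots \le x_n$ together with the increasingly ordered sequence $z_j := -y_{n+1-j}$, $j = 1, \dots, n$. For an arbitrary permutation $\sigma$, choosing the permutation $\rho$ with $\rho(k) = n+1-\sigma(k)$ makes $z_{\rho(k)} = -y_{\sigma(k)}$, so the upper bound gives $-\sum_k x_k y_{\sigma(k)} = \sum_k x_k z_{\rho(k)} \le \sum_k x_k z_k = -\sum_k x_k y_{n+1-k}$, that is, $\sum_k x_k y_{\sigma(k)} \ge \sum_k x_k y_{n+1-k}$, which is the claimed minimum.

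There is no serious obstacle here; the argument is entirely elementary. The only points that deserve a moment's care are the termination of the exchange process --- guaranteed because the nonnegative integer inversion count strictly decreases at each step --- and the treatment of repeated values among the $x_i$ or $y_i$: an exchange may then leave the sum unchanged, but it still reduces the inversion count, so the iteration and its conclusion are unaffected. One should also note that the statement only asserts that the identity and the reversal permutations \emph{attain} the two extrema, not that they are the unique extremizers, which indeed fails precisely when there are ties.
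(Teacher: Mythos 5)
Your proof is correct. Note, however, that the paper does not actually prove this theorem---it states the rearrangement inequality as a classical known fact in the appendix and uses it only for context, so there is no internal proof to compare against. Your argument is the standard one: the adjacent-transposition (bubble-sort) exchange step, with termination guaranteed by the strictly decreasing inversion count, and the lower bound obtained by the reflection $z_j = -y_{n+1-j}$. All signs and index manipulations check out (in the reflection step, $j \mapsto j+1$ sends $n+1-j \mapsto n-j$, so $z_j = -y_{n+1-j} \le -y_{n-j} = z_{j+1}$ as required, and $\rho(k)=n+1-\sigma(k)$ is indeed a permutation with $z_{\rho(k)} = -y_{\sigma(k)}$). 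Your remark that ties among the $x_i$ or $y_i$ only affect uniqueness, not attainment, is also accurate and worth keeping.
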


There exists a generalization of the rearrangement inequality for the case of several sets of variables:

\begin{theorem}\label{thm:generalize_rearrangement}
Assume we are given $s$ ordered sequences
$x_1^{(i)} \le x_2^{(i)} \le \dots \le x_n^{(i)},i=1, \dots, s$. Consider the following functions of permutations 
$$
V(\sigma_1, \dots, \sigma_s) = x_{\sigma_1(1)}^{(1)} x_{\sigma_2(1)}^{(2)} \dots  x_{\sigma_s(1)}^{(n)} + \dots + x_{\sigma_1(n)}^{(1)} x_{\sigma_2(n)}^{(2)} \dots  x_{\sigma_s(n)}^{(n)}.
$$
Let $\sigma_0$ be the identity permutation. Then for any permutation set $\sigma_1, \dots, \sigma_n$ the inequality $V(\sigma_0, \dots, \sigma_0) \ge V(\sigma_1, \sigma_2, \dots, \sigma_s)$ holds.
\end{theorem}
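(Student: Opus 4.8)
The plan is to deduce the generalized rearrangement inequality from a ``layer cake'' representation that turns the product of the $s$ sequences into an estimate on the size of an intersection of $s$ subsets of $\{1,\dots,n\}$. We prove it for nonnegative sequences, which is all that is needed for \cref{thm:discrete} (there the entries are positive integers). Writing each nonnegative real $a$ as $a=\int_0^\infty\mathbf 1\{a>t\}\,dt$ and multiplying out, for every $k$
$$\prod_{i=1}^s x^{(i)}_{\sigma_i(k)}=\int_{[0,\infty)^s}\prod_{i=1}^s\mathbf 1\bigl\{x^{(i)}_{\sigma_i(k)}>t_i\bigr\}\,dt_1\cdots dt_s .$$
Summing over $k$ and exchanging the finite sum with the integral by linearity (the integrand is nonnegative and vanishes once some $t_i\ge\max_m x^{(i)}_m$, so no convergence issue arises), one gets
$$V(\sigma_1,\dots,\sigma_s)=\int_{[0,\infty)^s} N(\vec t;\vec\sigma)\,d\vec t,\qquad N(\vec t;\vec\sigma)=\#\bigl\{k:\ x^{(i)}_{\sigma_i(k)}>t_i\ \text{for all }i\bigr\}.$$
Hence it suffices to prove the pointwise bound $N(\vec t;\vec\sigma)\le N(\vec t;\sigma_0,\dots,\sigma_0)$ for every $\vec t$.

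The core step is the following observation. Fix $\vec t$ and put $a_i=\#\{m:x^{(i)}_m>t_i\}$. Since $x^{(i)}_1\le\dots\le x^{(i)}_n$, the set $\{m:x^{(i)}_m>t_i\}$ is the top interval $\{n-a_i+1,\dots,n\}$; therefore $\{k:x^{(i)}_{\sigma_i(k)}>t_i\}=\sigma_i^{-1}\bigl(\{n-a_i+1,\dots,n\}\bigr)$ is a set of size exactly $a_i$, and $N(\vec t;\vec\sigma)=\bigl|\bigcap_{i=1}^s\sigma_i^{-1}(\{n-a_i+1,\dots,n\})\bigr|\le\min_i a_i$. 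For the identity permutations the sets $\{n-a_i+1,\dots,n\}$ are nested, so their intersection has size precisely $\min_i a_i$, that is $N(\vec t;\sigma_0,\dots,\sigma_0)=\min_i a_i$. This is the required pointwise inequality, and integrating in $\vec t$ yields $V(\sigma_1,\dots,\sigma_s)\le V(\sigma_0,\dots,\sigma_0)$.

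The only genuinely delicate point is the combinatorial one: monotonicity of each $x^{(i)}$ is exactly what forces $\{m:x^{(i)}_m>t_i\}$ to be an upper interval, and the identity is the arrangement that makes all $s$ of these upper intervals simultaneously nested, which is what makes their common intersection as large as it can combinatorially be. Everything else (the layer-cake identity, the swap of sum and integral, and the trivial bound $|\bigcap_i B_i|\le\min_i|B_i|$) is routine. An alternative route is a direct exchange argument: at a maximizer of $V$, swapping two entries within a single row shows that row must be comonotone with the product of the remaining rows, and iterating forces all rows to be simultaneously sorted; but this requires a tie-breaking discussion that the distribution-function proof avoids.
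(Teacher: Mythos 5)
The paper does not actually prove \cref{thm:generalize_rearrangement}: it is quoted there without argument as a known generalization of the rearrangement inequality, and the text immediately moves on to the (harder, and still open to the authors) question of which permutations \emph{minimize} $V$. Your layer-cake proof is therefore a self-contained addition rather than an alternative to a published argument, and it is correct. The identity $V(\vec\sigma)=\int_{[0,\infty)^s}N(\vec t;\vec\sigma)\,d\vec t$ with
$N(\vec t;\vec\sigma)=\bigl|\bigcap_{i=1}^s\sigma_i^{-1}(\{n-a_i+1,\dots,n\})\bigr|$
is valid (finite sum of nonnegative integrands, compactly supported in $\vec t$), the bound $N\le\min_i a_i$ is immediate because each set in the intersection has cardinality $a_i$, and the identity permutations saturate it because the upper intervals $\{n-a_i+1,\dots,n\}$ are nested --- which is precisely where the monotonicity hypothesis enters.

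You were also right to add the nonnegativity restriction. The theorem as printed omits it, but it is genuinely needed once $s\ge3$: for $s=3$, $n=2$, $x^{(1)}=(0,1)$, $x^{(2)}=x^{(3)}=(-1,0)$ one has $V(\sigma_0,\sigma_0,\sigma_0)=0$ while transposing only $\sigma_1$ gives $V=1$. Nonnegativity holds automatically in the paper's application (entries are $1,\dots,n$), so nothing is lost. A small caution on the alternative you sketch at the end: the exchange argument needs more than just tie-breaking, since comonotonicity of each row with the product of the others is only a local condition and one still has to argue that the simultaneous sorting is globally optimal; your chosen route sidesteps that entirely.
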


Unfortunately, we do not know for which set of permutations $\sigma_1, \dots, \sigma_s$ the value of $V(\sigma_1, \sigma_2, \dots, \sigma_s)$ is minimal. 

The permutations in generalised rearrangement inequality correspond to a Monge solution for the multimarginal Monge-Kantorovich problem with cost function $x_1x_2\ldots x_s$ and the marginals equal to counting measures on $x^{(i)}_j$. 
We remark that the generalized rearrangement inequality for $3$ variables corresponds to the maximization problem $\int xyz d \pi \to \max$.

\subsection{Approximation of a partition by measures}

Let us introduce some notations. 
For every partition $$Sp = \{(x_i, y_i, z_i) \mid 1 \le i \le n\}$$ 
of $$
A = B = C = \{1, 2, \dots, n\}.
$$ into triples 
define $$S_0(Sp) = \sum_{(x_i, y_i, z_i) \in Sp} x_i y_i z_i.$$ Denote by $|Sp| = n$ the \textit{size} of partition $Sp$.

Let us try to reduce our problem to the transportation problem with the cost function $xyz$. With this purpose we construct the corresponding measure $\mu_1(Sp)$ on $[0, 1]^3$ which is concentrated at points with denominator $n$,
namely every point $({x_i \over n}, {y_i \over n}, {z_i \over n})$ carries the mass ${1 \over n}$. Set $S_1(Sp) = \int_{[0, 1]^3}xyz~d\mu_1(Sp)$. It is easy to check that $$n^4S_1(Sp) = S_0(Sp).$$ 

Projections of $\mu_1(Sp)$ on axes are discrete: measures of points ${i \over n}, 1 \le i \le n$ are equal to ${1 \over n}$. Thus measure $\mu_1(Sp)$ is not $(3, 1)$-stochastic in our sense, since its projections are not Lebesgue measures. This can be easily fixed. To this end, let us introduce another measure $\mu_2(Sp)$ on $[0, 1]^3$: for all $1 \le k \le n$ there exists the uniform measure 
on $$I_k = \left[{x_k - 1 \over n}, {x_k \over n}\right] \times \left[{y_k - 1 \over n}, {y_k \over n}\right] \times \left[{z_k - 1 \over n}, {z_k \over n}\right]$$ 
with density $n^2$ (it is chosen in such a way that the measure of the whole given little cube equals $\frac{1}{n}$). This measure is $(3, 1)$-stochastic.

Set $$S_2(Sp) = \int_{[0, 1]^3}xyz~d\mu_2(Sp).$$ Let us estimate $S_2(Sp)$. For this, we set $$\eps(n) = \sup\left(|x_1y_1z_1 - x_2y_2z_2|, \text{ subject to } \max(|x_1 - x_2|, |y_1 - y_2|, |z_1 - z_2|) \le {1 \over n}\right).$$ Function $xyz$ is continuous on $[0, 1]^3$, then it is uniformly continuous on the given cube. It immediately follows that $\eps(n) \to 0$ for $n \to \infty$. Then we can estimate $|S_1(Sp) - S_2(Sp)|$:
\begin{align*}
|S_1(Sp) - S_2(Sp)| &= \left|\sum_{(x_k, y_k, z_k) \in Sp}\int_{I_k} (xyz - x_ky_kz_k)~d\mu_2(Sp)\right| \\
&\le \sum_{(x_k, y_k, z_k) \in Sp}\int_{I_k} |xyz - x_ky_kz_k|~d\mu_2(Sp) \\
&\le \sum_{(x_k, y_k, z_k) \in Sp}\int_{I_k} \eps(n)~d\mu_2(Sp) \\&= \eps(n) \xrightarrow[n \to \infty]{} 0.
\end{align*}

Thus, $\lim\limits_{n \to \infty}{1 \over n^4}F_D(n)$ exists if and only if there exists $\lim\limits_{n \to \infty}\min\limits_{|Sp| = n}S_2(Sp)$ and in case of existence both limits coincide.

\subsection{Convergence}

In the previous subsection, we realized that it is sufficient to consider the problem of finding a partition $Sp$ that minimizes $S_2(Sp)$. In this section we prove that $\lim\limits_{n \to \infty}\min\limits_{|Sp| = n}S_2(Sp)$ exists. Later we will see that $\lim\limits_{n \to \infty}\min\limits_{|Sp| = n}S_2(Sp) = C_P$, where $C_P$ is the optimal value of the functionals in primal and dual problems.

From definition of $C_P$ the following statement immediately follows:
\begin{proposition}
For every partition $Sp$ there holds an inequality $S_2(Sp) \ge C_P$.
\end{proposition}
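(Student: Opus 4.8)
The plan is to observe that this inequality is essentially a restatement of the definition of $C_P$, once we recall a property of $\mu_2(Sp)$ that was already verified during its construction: namely, that $\mu_2(Sp)$ is a $(3,1)$-stochastic measure.

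First I would check the marginals of $\mu_2(Sp)$ directly. On the little cube $I_k$ the measure has density $n^2$, so the pushforward of the restriction $\mu_2(Sp)|_{I_k}$ onto the $x$-axis is the uniform measure of density $1$ on $\left[\frac{x_k-1}{n},\frac{x_k}{n}\right]$. Since $Sp$ is a genuine partition of $A=B=C=\{1,\dots,n\}$ into triples, the numbers $x_1,\dots,x_n$ run through $\{1,\dots,n\}$ exactly once; hence the intervals $\left[\frac{x_k-1}{n},\frac{x_k}{n}\right]$ tile $[0,1]$ and the $x$-marginal of $\mu_2(Sp)$ is the Lebesgue measure $\lambda$ on $[0,1]$. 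The same argument applies verbatim to the $y$- and $z$-marginals, so $\mu_2(Sp)$ is $(3,1)$-stochastic, i.e. it belongs to $\Pi(\lambda,\lambda,\lambda)$.

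Then, since $C_P$ is by definition the minimal value of $\int_{[0,1]^3} xyz\,d\mu$ over all $(3,1)$-stochastic measures $\mu$ — and this value is indeed attained, by \cref{thm:primal_dual} together with \cref{thm:osexists} — feasibility of $\mu_2(Sp)$ immediately gives $S_2(Sp) = \int_{[0,1]^3} xyz\,d\mu_2(Sp) \ge C_P$, which is the claim. There is no real obstacle here; the only point requiring (routine) care is the marginal computation, which uses precisely that $Sp$ is a true partition, so that each of the index multisets $\{x_k\}$, $\{y_k\}$, $\{z_k\}$ equals $\{1,\dots,n\}$.
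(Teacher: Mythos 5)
Your proof is correct and follows the same approach as the paper: note that $\mu_2(Sp)$ is $(3,1)$-stochastic (a fact the paper already records when constructing $\mu_2$), and then invoke the definition of $C_P$ as the infimum of $\int xyz\,d\mu$ over $(3,1)$-stochastic measures. The paper's own proof is just this observation stated in two lines, so your slightly more detailed marginal check is simply an expansion of the same argument.
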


Indeed, $S_2(Sp)$ is the integral of $xyz$ by $(3, 1)$-stochastic measure, and $C_P$ is the minimum for all $(3, 1)$-stochastic measures. 

\begin{proposition}
The sequence $s_k = \min\limits_{|Sp| = k}S_2(Sp)$ admits a limit.
\end{proposition}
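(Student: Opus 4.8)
### Proof Proposal

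The plan is to show that the sequence $s_k = \min_{|Sp|=k} S_2(Sp)$ is (approximately) subadditive and then apply Fekete's lemma. The natural operation is the following: given two partitions $Sp_1$ of $\{1,\dots,m\}$ and $Sp_2$ of $\{1,\dots,n\}$, we want to build a partition of $\{1,\dots,m+n\}$ whose associated measure $\mu_2$ has an integral controlled by those of $Sp_1$ and $Sp_2$. The obstruction is that $S_2$ is not naturally additive because the points $\frac{x_i}{m}$ and $\frac{x_j}{n}$ live on different grids, and the cost function $xyz$ is not scale-invariant, so one cannot simply concatenate the partitions. The way around this is to pass through $\mu_1$ and $S_1(Sp)$ (which, by the estimate $|S_1(Sp)-S_2(Sp)|\le\eps(n)$ already established, is uniformly close to $S_2$), and to combine partitions after rescaling so they share a common grid.

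Concretely, first I would note that for fixed $n$ and any $d\ge 1$, from a partition $Sp$ of size $n$ one can produce a partition $Sp^{(d)}$ of size $dn$ by ``refining'': replace the grid $\{1,\dots,n\}$ by $\{1,\dots,dn\}$ and split each triple appropriately so that $\mu_1(Sp^{(d)})$ approximates $\mu_2(Sp)$ (place the refined points uniformly inside the little cube $I_k$). This gives $S_1(Sp^{(d)}) \to S_2(Sp)$ as $d\to\infty$, hence $\liminf_k s_k \le S_2(Sp) + o(1)$ for every fixed $Sp$; combined with the uniform lower bound $s_k \ge C_P$ from the previous proposition, the sequence $s_k$ is bounded. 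The heart of the argument is then to show $\limsup_k s_k \le \liminf_k s_k$. For this, fix $\eps>0$ and choose $N$ with $s_N \le \liminf_k s_k + \eps$; let $Sp^*$ realize $s_N$. For arbitrary large $k$, write $k = qN + r$ with $0\le r<N$. Build a partition of size $k$ by taking $q$ refined copies of $Sp^*$ living on the common grid $\{1,\dots,k\}$ (each copy occupying a block of $N$ consecutive scaled values) together with an arbitrary partition of the leftover $rN$-ish numbers; the contribution of the main blocks to $S_2$ is, up to the uniform continuity error $\eps(k)$ and the rescaling distortion, close to $s_N$, while the leftover block contributes at most $O(r/k) = O(N/k) \to 0$.

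The main obstacle is making the rescaling-and-concatenation step rigorous: when one places a scaled copy of $Sp^*$ into a sub-box of $[0,1]^3$ of side $\approx q^{-1}$, the integral $\int xyz\,d\mu$ over that sub-box is not $q^{-4}$ times the original integral (because $xyz$ is not homogeneous under translation), so one must track how the integral over a translated-and-scaled copy compares to the original. The clean way to handle this is again to work with $\mu_1$-type measures and to observe that all the relevant measures are supported in $[0,1]^3$ with the cost function $xyz$ bounded and uniformly continuous there, so that replacing a measure by another one with the same marginals and supported within $\eps(k)$ of it changes the integral by at most $\eps(k)$. Since $\eps(k)\to 0$, all the distortion terms vanish in the limit, and we conclude $s_k \to \liminf_k s_k = \inf_k s_k$ (or the limit guaranteed by Fekete after verifying the near-subadditivity $s_{m+n} \le s_m + s_n + o(1)$). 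Thus the limit exists.
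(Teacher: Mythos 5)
Your opening refinement step is essentially the paper's key lemma: from $Sp$ of size $N$ one builds $Sp^{(d)}$ of size $dN$ by subdividing each small cube $I_i$ (of side $1/N$) into $d$ sub-cubes along its diagonal, so that $\mu_1(Sp^{(d)})$ approximates $\mu_2(Sp)$ and hence $s_{dN} \le S_2(Sp) + 2\eps(N)$. That part is sound and is where the real content lies.

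But the step you describe as the ``heart of the argument'' does not work. Placing $q$ translated-and-scaled copies of $Sp^*$ in blocks of $N$ consecutive values $\{(j-1)N+1,\dots,jN\}$ on the grid $\{1,\dots,k\}$, $k=qN+r$, produces a measure whose $\mu_1$-points $(\tfrac{x_i+(j-1)N}{k},\tfrac{y_i+(j-1)N}{k},\tfrac{z_i+(j-1)N}{k})$ all lie in the diagonal sub-cubes $[\tfrac{(j-1)N}{k},\tfrac{jN}{k}]^3$. The integral of $xyz$ against this measure is then, up to $O(1/q)$, $\sum_{j} q^{-1}((j-1)/q)^3 \to \int_0^1 t^3\,dt = \tfrac14$, \emph{independently of $Sp^*$}, which is far above $C_P$ (already the product measure gives $1/8$). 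Your stated fix---that replacing a measure by one with the same marginals ``supported within $\eps(k)$ of it'' moves the integral by at most $\eps(k)$---does not apply, because the tiled measure is nowhere near $\mu_2(Sp^*)$: it is pinned to the main diagonal. The obstacle you flag (non-homogeneity of $xyz$ under translation) is genuine and is precisely why a global tiling cannot work; the only scale at which the uniform-continuity argument is valid is the \emph{coarse} scale $1/N$, which forces a \emph{local} refinement inside each $I_i$ rather than a global tiling into sub-boxes of side $\approx 1/q$. Once you use the local refinement to control $s_{qN}$, the sizes strictly between $qN$ and $(q+1)N$ should be handled by a separate elementary estimate: append $r$ diagonal triples $(i,i,i)$ to a size-$qN$ partition and rescale, giving $s_{qN+r} \le (\tfrac{qN}{qN+r})^4 s_{qN} + \tfrac{r}{qN+r}$. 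Combining the two yields $s_m \le s_N + 2\eps(N)$ for all large $m$, from which the limit follows. Finally, the Fekete-type remark is also off target: here $s_n$ is bounded and one wants $\lim_n s_n$, not $\lim_n s_n/n$, so (near-)subadditivity $s_{m+n}\le s_m + s_n + o(1)$ would give no information.
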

\begin{proof}
The sequence $s_k$ is bounded below by $C = C_P$. First, we check that $s_{n + k} \le \left(n \over n + k\right)^4s_n + {k \over n + k}$. Indeed, let $Sp_n$ be a partition with $S_2(Sp_n) = s_n$. We construct a partition $Sp_{n + k} = Sp_n \cup \{(i, i, i) \mid n + 1 \le i \le n + k\}$ and verify inequality $S_2(Sp_{n + k}) \le \left(n \over n + k\right)^4s_n + {k \over n + k}$:
\begin{align*}
S_2(Sp_{n + k}) &= \sum_{(x_i, y_i, z_i) \in Sp_n}\int_{z_i - 1 \over n + k}^{z_i \over n + k}\int_{y_i - 1 \over n + k}^{y_i \over n + k}\int_{x_i - 1 \over n + k}^{x_i \over n + k}(n + k)^2xyz~dxdydz \\
&+ \sum_{i = n + 1}^{n + k}\int_{i - 1 \over n + k}^{i \over n + k}\int_{i - 1 \over n + k}^{i \over n + k}\int_{i - 1 \over n + k}^{i \over n + k}(n + k)^2xyz~dxdydz \\
&\le \sum_{(x_i, y_i, z_i) \in Sp_n}\int_{z_i - 1 \over n}^{z_i \over n}\int_{y_i - 1 \over n}^{y_i \over n}\int_{x_i - 1 \over n}^{x_i \over n}(n + k)^2 \left(n \over n + k\right)^6uvw~dudvdw + {k \over n + k} \\
&= \left(n \over n + k\right)^4\sum_{(x_i, y_i, z_i) \in Sp_n}\int_{z_i - 1 \over n}^{z_i \over n}\int_{y_i - 1 \over n}^{y_i \over n}\int_{x_i - 1 \over n}^{x_i \over n}n^2uvw~dudvdw + {k \over n + k} \\
&= \left(n \over n + k\right)^4s_n + {k \over n + k},
\end{align*}
where $u := {n + k \over n}x$, $v := {n + k \over n}y$, $w := {n + k \over n}z$.

We also verify that $s_{nk} \le s_n + \eps(n)$. As in the proof of the previous statement, assume that $Sp_n$ is a partition with $S_2(Sp_n) = s_n$.
We construct another partition $Sp_{nk} = \{(u_{k(i - 1) + j}, v_{k(i - 1) + j}, w_{k(i - 1) + j})\} = \{(k(x_i - 1) + j, k(y_i - 1) + j, k(z_i - 1) + j)\}$, where $1 \le i \le n$, $1 \le j \le k$. It is easy to check that $Sp_{nk}$ is a partition.

We estimate $S_2(Sp_{nk})$. For indices $i$ and $j$ 
\begin{align*}
\int_{I_{k(i - 1) + j}}n^2k^2xyz~dxdydz &\le \int_{z_i - 1 \over n}^{z_i \over n}\int_{y_i - 1 \over n}^{y_i \over n}\int_{x_i - 1 \over n}^{x_i \over n}{n^2 \over k}(xyz + \eps(n))~dxdydz \\
&\le {n^2 \over k} \int_{z_i - 1 \over n}^{z_i \over n}\int_{y_i - 1 \over n}^{y_i \over n}\int_{x_i - 1 \over n}^{x_i \over n}xyz~dxdydz + {1 \over nk}\eps(n).
\end{align*}
From this we get:
\begin{align*}
S_2(Sp_{nk}) &= \sum_{i = 1}^n\sum_{j = 1}^k\int_{I_{k(i - 1) + j}}n^2k^2xyz~dxdydz \\
&\le \sum_{i = 1}^n\sum_{j = 1}^k\left({n^2 \over k} \int_{z_i - 1 \over n}^{z_i \over n}\int_{y_i - 1 \over n}^{y_i \over n}\int_{x_i - 1 \over n}^{x_i \over n}xyz~dxdydz + {1 \over nk}\eps(n)\right) \\
&= \eps(n) + \sum_{i = 1}^n\int_{z_i - 1 \over n}^{z_i \over n}\int_{y_i - 1 \over n}^{y_i \over n}\int_{x_i - 1 \over n}^{x_i \over n}n^2xyz~dxdydz \\
&= s_n + \eps(n).
\end{align*}

From these inequalities we find that for $1 \le i \le n$:
$$
s_{kn + i} \le \left(kn \over kn + i\right)^4s_{kn} + {i \over kn + i} \le s_{kn} + {1 \over k + 1} \le s_n + \eps(n) + {1 \over k + 1}.
$$
As ${1 \over k + 1} \to 0$, we get $s_m \le s_n + 2\eps(n)$ for all sufficiently large $m$.

Set $C_1 = \lim\inf s_n$. We prove that $\lim\limits_{n \to \infty}s_n = C_1$. Indeed, for any $\eps > 0$ there exists such $N$, that $s_N < C_1 + {\eps \over 2}$ and $2\eps(N) < {\eps \over 2}$. Then for all sufficiently large $m$ the inequality $s_m \le s_N + 2\eps(N) < C_1 + \eps$ holds. In addition, for all sufficiently large $m$, inequality $s_m > C_1 - \eps$ holds, otherwise there exists a convergent subsequence, with a limit not greater than $C_1 - \eps$. Thus, $\lim\limits_{n \to \infty}s_n = C_1$, in particular, this sequence is convergent.
\end{proof}

From this statement it follows that it suffices to find partitions $Sp_t$ of an arbitrary size for which $\lim\limits_{t \to \infty}S_2(Sp_t) = C_P$.

\subsection{Discrete measure approximation}

Let $\widetilde{\mu}$ be a measure solving the primal problem. For a given $n$ we define another measure $\widetilde{\mu}_n$. We require that 
$\widetilde{\mu}_n$ is uniform 
on every $$I_{ijk} = \left[{i - 1 \over n}, {i \over n}\right] \times \left[{j - 1 \over n}, {j \over n}\right] \times \left[{k - 1 \over n}, {k \over n}\right],$$
$1 \le i, j, k \le n$ 
and satisfies 
$
\int_{I_{ijk}}1~d\widetilde{\mu} = \int_{I_{ijk}}1~d\widetilde{\mu}_n.
$
The latter quantity will be denoted by $\rho_{ijk}$. The resulting measure will be $(3, 1)$-stochastic.

 Set $c_{ijk} = \min(xyz \mid (x, y, z) \in I_{ijk})$. Then for all $(x, y, z) \in I_{ijk}$ there holds $|c_{ijk} - xyz| < \eps(n)$. Hence, it is possible to estimate $|\int_{[0, 1]^3}xyz~d\widetilde{\mu} - \int_{[0, 1]^3}xyz~d\widetilde{\mu}_n|$:

\begin{align*}
&\Bigg|\int_{[0, 1]^3}xyz~d\widetilde{\mu} - \int_{[0, 1]^3}xyz~d\widetilde{\mu}_n\Bigg| \le \sum_{1 \le i, j, k, \le n}\left|\int_{I_{ijk}}xyz~d\widetilde{\mu} - \int_{I_{ijk}}xyz~d\widetilde{\mu}_n\right|  \\
&\le \sum_{1 \le i, j, k, \le n}\left|\int_{I_{ijk}}c_{ijk}~d\widetilde{\mu} - \int_{I_{ijk}}c_{ijk}~d\widetilde{\mu}_n\right| + \eps(n)\left(\int_{I_{ijk}}1~d\widetilde{\mu} + \int_{I_{ijk}}1~d\widetilde{\mu}_n\right) \\
&= \eps(n)\left(\int_{[0, 1]^3}1~d\widetilde{\mu} + \int_{[0, 1]^3}1~d\widetilde{\mu}_n\right) = 2\eps(n).
\end{align*}

For the following discussion we need the following theorem:

\begin{theorem}[Dirichlet's theorem on the Diophantine approximation]
Assume we are given a set of real numbers $(a_1, a_2, \dots, a_d)$. Then for every $\eps > 0$ there exists a natural number $m$ and integers $b_1, b_2, \dots b_d$ such that $|a_i m - b_i| < \eps$ for all $1 \le i \le d$.
\end{theorem}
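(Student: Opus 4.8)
The plan is to deduce this from the pigeonhole principle applied to the fractional parts of integer multiples of the $a_i$. First I would fix $\eps > 0$ and pick a positive integer $k$ with $k > 1/\eps$, so that $1/k < \eps$. Then I would partition the half-open unit cube $[0,1)^d$ into the $k^d$ congruent half-open subcubes
$$
Q_{\ell_1,\dots,\ell_d} = \left[\frac{\ell_1}{k},\frac{\ell_1+1}{k}\right) \times \dots \times \left[\frac{\ell_d}{k},\frac{\ell_d+1}{k}\right), \qquad 0 \le \ell_i \le k-1,
$$
each of side length $1/k$.

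Next I would look at the $k^d+1$ points $P_j = (\{j a_1\},\dots,\{j a_d\}) \in [0,1)^d$ for $j = 0, 1, \dots, k^d$, where $\{t\}$ denotes the fractional part of $t$. Since there are $k^d+1$ points but only $k^d$ subcubes, two of them, say $P_p$ and $P_q$ with $0 \le p < q \le k^d$, must lie in the same subcube. I would then set $m = q - p$, which is a natural number, and $b_i = \lfloor q a_i\rfloor - \lfloor p a_i\rfloor \in \mathbb{Z}$. Because the $i$-th coordinates of $P_p$ and $P_q$ belong to the same interval $[\ell_i/k, (\ell_i+1)/k)$, one has $|\{q a_i\} - \{p a_i\}| < 1/k$; and the identity $\{q a_i\} - \{p a_i\} = (q a_i - \lfloor q a_i\rfloor) - (p a_i - \lfloor p a_i\rfloor) = m a_i - b_i$ then gives $|m a_i - b_i| < 1/k < \eps$ for every $i$, as required.

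There is no genuine obstacle here: this is the classical simultaneous Diophantine approximation argument, and the only point requiring a little care is the algebraic bookkeeping identity above together with the use of the half-open convention for the subcubes, which is precisely what forces each coordinate difference to be strictly less than the side length $1/k$. One could add a remark that choosing $k$ adaptively, in terms of the desired precision, yields the sharper quantitative form of Dirichlet's theorem, but the qualitative statement above is all that is needed in the sequel.
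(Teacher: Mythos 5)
Your argument is correct: it is the standard pigeonhole proof of simultaneous Dirichlet approximation, with the right bookkeeping ($m=q-p$, $b_i=\lfloor q a_i\rfloor-\lfloor p a_i\rfloor$, and the half-open cubes forcing a strict inequality $|\{qa_i\}-\{pa_i\}|<1/k$). The paper does not actually give a proof of this theorem --- it is quoted as a classical result and immediately applied to the numbers $n\rho_{ijk}$ --- so your proposal supplies precisely the argument the paper is implicitly relying on, and there is no divergence to report.
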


Applying this theorem for the set $n\rho_{ijk}$, we find that for any $\eps_1$ there exists a natural $m$, such that $\rho_{ijk} = \frac{t_{ijk} + \eps_{ijk}}{nm}$, where $|\eps_{ijk}| < \eps_1$ and all $t_{ijk}$ are integers. 
We construct the measure $\nu_{n,m}$ as follows : on each cube $I_{ijk}$ we define a uniform measure in such a way that the measure of the whole cube $I_{ijk}$ is equal to ${t_{ijk} \over nm}$.

We verify that this measure is $(3, 1)$-stochastic provided $\eps_1 < {1 \over n^2}$. For this it suffices to verify that the sum of all $t_{ijk} \over nm$ with one argument fixed is equal to ${1 \over n}$. Without loss of generality, we fix $i$. Then ${1 \over n} = \sum_{1 \le j, k \le n}\rho_{ijk} = \sum_{1 \le j, k \le n}{t_{ijk} \over nm} + \sum_{1 \le j, k \le n}{\eps_{ijk} \over nm}$ or $m = \sum_{1 \le j, k \le n}t_{ijk} + \sum_{1 \le j, k \le n}\eps_{ijk}$. All $t_{ijk}$ are natural numbers, and $\left|\sum_{1 \le j, k \le n}\eps_{ijk}\right| \le n^2\eps_1 < 1$, thus $\sum_{1 \le j, k \le n}t_{ijk} = m$, as required.

Estimate the difference $|\int_{[0, 1]^3}xyz~d\widetilde{\mu}_n - \int_{[0, 1]^3}xyz~d\nu_{n,m}|$:
\begin{align*}
&\Bigg|\int_{[0, 1]^3}xyz~d\widetilde{\mu}_n - \int_{[0, 1]^3}xyz~d\nu_{n,m}\Bigg| \le \sum_{1 \le i, j, k, \le n}\left|\int_{I_{ijk}}xyz~d\widetilde{\mu}_n - \int_{I_{ijk}}xyz~d\nu_{n,m}\right| \\
&\le \sum_{1 \le i, j, k, \le n}\left|\int_{I_{ijk}}c_{ijk}~d\widetilde{\mu}_n - \int_{I_{ijk}}c_{ijk}~d\nu_{n,m}\right| + \eps(n)\left(\int_{I_{ijk}}1~d\widetilde{\mu}_n + \int_{I_{ijk}}1~d\nu_{n,m}\right)\\
&= \sum_{1 \le i, j, k \le n}c_{ijk}\left|\rho_{ijk} - {t_{ijk} \over nm}\right| +\eps(n)\left(\int_{[0, 1]^3}1~d\widetilde{\mu}_n + \int_{[0, 1]^3}1~d\nu_{n,m}\right) \\
&\le {n^3 \eps_1 \over nm} + 2\eps(n) \le n^2\eps_1 + 2\eps(n).
\end{align*}

Assume we have found a partition $Sp_{nm}$ and the corresponding $\mu_2(Sp_{nm})$ such that every $I_{ijk}$ contains exactly $t_{ijk}$ small cubes with sides ${1 \over nm}$. Then one can control the difference $|\int_{I}xyz~d\nu_{n,m} - \int_{I}xyz~d\mu_2(Sp_{nm})|$ in the same way as above. One can easily check that the upper bound is $2\eps(n)$, hence $| \int_{I}xyz~d\mu_2(Sp_{nm}) - C_P| \le 6\eps(n) + n^2\eps_1$. This number can be less than any preassigned $\eps$: first we choose $n$, such that $6\eps(n) < \eps / 2$, then choose $\eps_1$, such that $n^2\eps_1 < \eps / 2$.

Thus, to complete the main proof of this section, it is sufficient to show that for given numbers $t_{ijk}, 1 \le i, j, k \le n$ it is always possible to construct a partition with the required property. Namely, using the fact that for a fixed $i$ the sum $\sum_{1\le j, k \le n} t_{ijk}$ is equal to $m$, we build a partition $Sp_{nm} = \{(x_i, y_i, z_i) \mid 1 \le i \le nm\}$ with $$\{x_1, \dots, x_{nm}\} = \{y_1, \dots, y_{nm}\} = \{z_1, \dots, z_{nm}\} = \{1, 2, \dots, nm\}$$ such that for fixed $i, j, k \in \{1, \ldots, n\}$ the number of indices $t$ satisfying $$m(i-1) < x_t \le mi, \ m(j - 1) < y_t \le mj, \ m(k-1) < z_t \le mk $$ equals $t_{ijk}$.

In order to do this, we construct a correspondence between the numbers $1$, \dots, $nm$ and the triples $(i, j, k)$, $1 \le i, j, k \le n$, in such a way that to every index it is assigned exactly one triple, and every triple $(i, j, k)$ corresponds to exactly $t_{ijk}$ indices lying in the half-open interval $\left(m(i-1), mi\right]$. The construction is accomplished step by step. 
The interval $\left(m(i-1), mi\right]$ containing the first $t_{i11}$ numbers corresponds to the triple $(i, 1, 1)$, the following $t_{i12}$ numbers corresponds to the triple $(i, 1, 2)$, and so on.
The last $t_{inn}$ numbers are associated with $(i, n, n)$. This procedure is possible because $\sum_{1 \le j, k \le n} t_{ijk} = m$.

Similarly, we construct the correspondences in the second and third coordinates. As a result, every triple $(i, j, k)$ corresponds to a set of numbers $a_{(i, j, k),1}, \dots, a_{(i, j, k), t_{ijk}}$ from $\left(m(i-1), mi\right]$,
numbers $b_{(i, j, k), 1}, \dots, b_{(i, j, k), t_{ijk}}$ from $\left(m(j-1), mj\right]$, and numbers $c_{(i, j, k), 1}, \dots, c_{(i, j, k), t_{ijk}}$ from $\left(m(k-1), mk\right]$.
Then we set: 
$$Sp_{nm} = \{a_{(i, j, k), t}, b_{(i, j, k), t}, c_{(i, j, k), t}\}, \ 1 \le i, j, k \le n, 1\le t \le t_{ijk}.$$ 
Clearly, this will be a partition of size $nm$, since the values of the numbers $a_{(i, j, k), t}$, $b_{(i, j, k), t}$ and $c_{(i, j, k), t}$ are exactly the set $\{1, \ldots, nm\}$.

This completes the proof of \cref{thm:discrete}.

\end{document}